\setlist{font=\normalfont,topsep=1ex,parsep=0ex}
\setlist[enumerate]{label=(\alph*)}
\numberwithin{equation}{section}
\numberwithin{table}{section}    
\numberwithin{figure}{section}
\crefname{figure}{Figure}{Figures}
\crefname{table}{Table}{Tables}
\crefname{assumption}{Assumption}{Assumptions}
\Crefname{ALC@unique}{Step}{Steps}
\newlist{alglist}{enumerate}{1}
\setlist[alglist]{topsep=1ex,parsep=0ex,leftmargin=*,label=\textbf{Step~\arabic*.}}
\newcommand{\R}{\mathbb{R}}
\newcommand\norm[1]{\left\Vert#1\right\Vert}
\newcommand{\N}{\mathbb{N}}
\newcommand{\K}{\mathcal{K}}
\DeclareMathOperator{\dist}{dist}
\newcommand{\dom}{\operatorname{dom}}
\newtheoremstyle{bolddef}{}{}{\normalfont}{}{\bfseries}{.}{ }{\thmname{#1}\thmnumber{ #2}\thmnote{ (#3)}}
\newtheoremstyle{boldplain}{}{}{\itshape}{}{\bfseries}{.}{ }{\thmname{#1}\thmnumber{ #2}\thmnote{ (#3)}}
\theoremstyle{bolddef}
\newtheorem{definition}{Definition}[section]
\newtheorem{algorithm}[definition]{Algorithm}
\newtheorem{assumption}[definition]{Assumption}
\theoremstyle{boldplain}
\newtheorem{lemma}[definition]{Lemma}
\newtheorem{theorem}[definition]{Theorem}
\newtheorem{proposition}[definition]{Proposition}
\newlength\figureheight
\newlength\figurewidth
\pgfplotsset{width=7cm,compat=1.3}
\definecolor{todocolor}{rgb}{1.0,0.0,0.0}
\newcommand\email[1]{\href{mailto:#1}{\texttt{#1}}}
\newcommand{\orcid}[1]{ORCID: \href{https://orcid.org/#1}{#1}}
\newcommand{\mscLink}[1]{\href{http://www.ams.org/mathscinet/msc/msc2020.html?t=#1}{#1}}
\begin{document}

\title{
	\bfseries
Convergence analysis of nonmonotone proximal gradient methods under local Lipschitz continuity and Kurdyka--{\L}ojasiewicz property
	}

\author{Xiaoxi Jia%
	\thanks{%
		Saarland University,
		Department of Mathematics and Computer Science,
		66123 Saarbrücken,
		Germany,
		\email{xiaoxijia26@163.com},
		\orcid{0000-0002-7134-2169}
	}
       \hspace*{-4mm} \and \hspace*{-4mm}
	Kai Wang%
	\thanks{%
		Nanjing University of Science and Technology,
		School of Mathematics and Statistics,
		210094 Nanjing,
		China,
		\email{wangkaihawk@njust.edu.cn}
	}
	}

\maketitle
{
\small\textbf{\abstractname.}
The proximal gradient method is a standard approach for solving composite minimization problems in which the objective function is the sum of a continuously differentiable function and a lower semicontinuous, extended-valued function.  The traditional convergence theory for both monotone and nonmonotone variants replies heavily on the assumption of global Lipschitz continuity of the gradient of the smooth part of the objective function.  Recent work has shown that monotone proximal gradient methods converge globally only when the local (rather than global) Lipschitz continuity is assumed,  provided that the Kurdyka--{\L}ojasiewicz (KL) property holds.  However,  these results have not been extended to nonmonotone proximal gradient (NPG) methods.  In this manuscript, we consider two types of NPG methods: those combined with the average line search and the max line search, respectively.  By partitioning indices into two subsets,  one of which aims to achieve a sufficient decrease in the functional sequence,  we establish global convergence and rate-of-convergence results using the local Lipschitz continuity and the KL property, without  requiring boundedness of the iterates.  While finalizing this work, we noticed that \cite{kanzow2024convergence} presented analogous results for the NPG method with average line search, but with a different partitioning strategy.  Together, we confidently conclude that the convergence theory of the NPG method is independent on index partitioning choices.
\par\addvspace{\baselineskip}
}

{
\small\textbf{Keywords.}
	Composite problems $\cdot$ Nonmonotone proximal gradient methods $\cdot$ Average and max line searches $\cdot$ Kurdyka-{\L}ojasiewicz property $\cdot$ Local Lipschitz continuity
\par\addvspace{\baselineskip}
}

{
\small\textbf{AMS subject classifications.}
	\mscLink{49J52}, \mscLink{90C26}, \mscLink{90C30}
\par\addvspace{\baselineskip}
}

\section{Introduction}\label{Sec:Intro}
\indent Let us consider the following optimization problem
\begin{equation}\label{Eq:P}\tag{Q}
	\min_x \ q(x):=f(x) + g(x) \qquad \text{s.t.} \quad x \in \mathbb X,
\end{equation}
where $\mathbb X$ is an Euclidean space, i.e., real and finite-dimensional Hilbert space, $f: \mathbb X \to \mathbb R$ is continuously differentiable, $g: \mathbb X \to \overline{\mathbb R}$ is assumed to be merely lower semicontinuous.  Note that \eqref{Eq:P} is totally nonconvex, which has a wide range of applications in practice, like machine learning, image processing,  and data science \cite{BianChen2015,BrucksteinDonohoElad2009,BolteSabachTeboulleVaisbourd2018}.  \\
\indent In this manuscript, we are interested in two types of nonmonotone proximal gradient methods for solving \eqref{Eq:P}, where the nonmonotone line search techniques are based on different nonmonotone criteria. Specifically,  we consider the approach of Zhang and Hager \cite{doi:10.1137/S1052623403428208},  which monitors the weighted \textit{average} of the objective values over the past iterates, and the approach of Grippo et al. \cite{grippo1986nonmonotone}, which uses the \textit{maximum} objective value attained among the most recent iterates.  In both cases, a candidate iterate $x^{k+1} \in \dom q$ is acceptable if the corresponding objective $q(x^{k+1})$ is less than a designated merit function, defined as
\begin{equation*}
\Phi_k:=(1-p)\Phi_{k-1}+pq(x^k) \ \text{(average case)}, \quad q(x^{l(k)}):=\max_{j=0,\ldots, \min\{m, k\}} q(x^{k-j}) \ \text{(max case)},
\end{equation*}
where $m\in \mathbb N$ and $p\in (0, 1]$ are constants, $l(k)\in \{k-m, \ldots, k \}$. Note that $p$ and $m$ aim to control the level of nonmonotonicity.  For the smaller $p$, or the larger $m$, the corresponding metric function attains larger values, hence imposing a weaker condition for the acceptance of $x^{k+1}$ compared to the monotone proximal gradient methods \cite{de2023proximal}. The values of $p=1$ and $m=0$ lead to the monotone behavior, i.e., the nonmotone proximal gradient method degenerates into the monotone one, please see \cite[Algorithm~3.1]{KanzowMehlitz2022} and \cite[Algorithm~3.1]{jia2023convergence}.

The convergence theory associated with (monotone and nonmonotone) proximal gradient methods primarily requires the derivative of the smooth part of the objective function to be globally Lipschitz continuous.  However,  the requirement is quite restrictive in practical scenarios. \cite[Examples~3.6 and~3.7]{jia2023convergence}, as two classical examples, revealed that the global Lipschitz assumption on $\nabla f$ is typically violated, whereas the local Lipschitz condition is often satisfied. \cite{KanzowMehlitz2022} readdressed the classical monotone proximal gradient method \cite[Algorithm~3.1]{KanzowMehlitz2022} and the NPG method with a max line search \cite[Algorithm~4.1]{KanzowMehlitz2022}, and provided subsequential convergence results based solely on the local (not global any more) Lipschitz assumption.  Regarding the convergence of the entire sequence generated by proximal gradient methods using the local Lipschitz assumption, \cite{jia2023convergence, bonettini2018block} revealed this mystery for monotone proximal gradient methods in the presence of the Kurdyka-{\L}ojasiewicz (KL) property of the objective function.  \cite{de2023proximal} considered an adaptive nonmonotone proximal gradient scheme based on an averaged metric function and established the subsequential convergence results with the local Lipschitz assumption of $\nabla f$, where the global worst-case rates for the iterates and the subsequential stationarity were also derived.  Note that numerical behaviours and competitiveness have already been investigated in many literature,  for example see \cite{qian2024convergence, de2023proximal,doi:10.1137/22M1469663,NIPS2015_f7664060,liu2024nonmonotone, wang2024class}. In this manuscript, we emphasize theoretical aspects of the method (without numerical experiments),  showing qualitative and even quantitative properties of NPG methods with a more general framework, facilitating various applications, such as in the context of multiplier-penalty methods \cite{JiaKanzowMehlitzWachsmuth2021}.  More precisely,  we, in this manuscript, consider the convergence theory of the entire sequence generated by the NPG methods with both average and max backtracking line search techniques under the same assumptions when the KL property is employed.  

The corresponding convergence results for the NPG method with max line search have been analyzed in \cite{doi:10.1137/22M1469663}, in which the presence of an auxiliary sequence is a necessary prerequisite for convergence. However, such requirement is not required in our technique.  One the other hand, the boundedness of subdifferential of the objective function is needed in \cite{doi:10.1137/22M1469663} for all iterates, which, to the best of our knowledge,  does not hold under the local Lipschitz continuity.  In this manuscript, note that no any prerequisite is assumed and the corresponding convergence and rate-of-convergence results obtained in \cite{doi:10.1137/22M1469663} can be also achieved by merely requiring the local Lipschitz continuity of $\nabla f$.

The primary challenge posed by the NPG method, in fact,  is that the generated functional sequence may not exhibit a sufficient decrease.  To date, the sufficient descent property appears to be necessary for the technical proof when the KL property is employed \cite{AttouchBolteRedontSoubeyran2010,AttouchBolteSvaiter2013,BolteSabachTeboulle2014,BolteSabachTeboulleVaisbourd2018}.  To the best of our knowledge,  nearby all existing studies addressing this issue reply on partitioning the index sets \cite{doi:10.1137/22M1469663, kanzow2024convergence, qian2024convergence}.  Specifically, these works divide the indices into two subsets: one subset ensures a sufficient decrease in the functional sequence, while the complement accounts for the remaining case.  A recent paper \cite{kanzow2024convergence} considered the convergence results of the NPG method with average line search under the KL property, merely requiring the local Lipschitz continuity of $\nabla f$,  where the index sets are defined as follows:
\begin{equation*}
K_1:=\{k\in \mathbb N \,|\, q(x^k) \leq \Phi_{k+\bar m}\}, \quad K_2:=\{k\in \mathbb N \,|\, q(x^k) > \Phi_{k+\bar m}\},
\end{equation*}
where $\bar m$ is a predetermined integer that relies on $p_{\min}$ to facilitate the convergence analysis.  This partitioning originates from \cite{qian2024convergence} and the subsequent convergence analysis heavily depends on the techniques presented therein, where the key component is exploiting the quantitative relationship between $\sum_{i=k}^{k+\bar m-1} \sqrt{\Phi_{i-1}-\Phi_{i}}$ and $\sqrt{\Phi_{i-1}-\Phi_{i}}$, see \cite[Lemma~4.4]{kanzow2024convergence} and \cite[Theorem~2.1]{qian2024convergence}. To be honest, we and \cite{kanzow2024convergence} happened to consider the same topic almost simultaneously.  In this manuscript, for the NPG method with average line search, to the best of our knowledge, we are the first to propose a different partitioning of the index sets from \cite{qian2024convergence,kanzow2024convergence}, as follows:
\begin{equation*}
S:=\{k\in \mathbb N \,|\, q(x^k)- \Phi_{k+1}\leq \frac{\mu}{2}\|x^{k+1}-x^k\|^2\}, \quad \overline S:=\mathbb N \setminus S,
\end{equation*}
where $\mu \in (0, \frac{1}{2}\delta {p_{\min}\gamma_{\min}}]$ is a constant, and $\delta$, $p_{\min}$, and $\gamma_{\min}$ are parameters derived from the algorithm.  Actually, different partitionings of the index sets necessitate different techniques for proofs.  However, our approach yields more natural and concise proofs than those in \cite{kanzow2024convergence},  because it eliminates the need for the mentioned-above quantitative relationship (\cite[Lemma~4.4]{kanzow2024convergence} and \cite[Theorem~2.1]{qian2024convergence}).  
Furthermore, \cite{kanzow2024convergence} presents the theoretical analysis of the NPG method with a max line search for future work, we conduct the comprehensive analysis in this manuscript. The decision to present this part of work (the NPG method with average line search), despite its similarities to \cite{kanzow2024convergence}, is also justified by a noteworthy finding: both works demonstrate that the convergence results of NPG methods are independent on the specific partitioning of the index sets. This insight significantly advances the understanding of the methods.

This manuscript is organized as follows: We first recall some background knowledge in \cref{Sec: Pre}. We then present the basic properties and convergence results of NPG methods with average line search and max line search in \cref{Sec:Alg and Res} and \cref{Sec:max}, respectively.  \cref{Sec:con} concludes this manuscript.

\section{Preliminaries}\label{Sec: Pre}
Throughout the paper, the Euclidean space $\mathbb X$ is equipped with the inner product 
$\langle\cdot,\cdot\rangle\colon \mathbb X\times\mathbb X\to\R$
and the associated norm $\norm{\cdot}$. 
Denote
\[
	\dist(x,A):=\inf\{\norm{y-x}\,|\,y\in A\}
\]
as the distance of the point $x$ to the set $A$ with
$\dist(x,\emptyset):=\infty$.
For any given $\varepsilon>0$,  denote $B_\varepsilon(x):=\{y\in\mathbb X\,|\,\norm{y-x}\leq\varepsilon\}$
as the closed $\varepsilon$-ball centered at $x$.

Let the function $f\colon\mathbb X\to\R$ be continuously differentiable, the continuous linear operator
$f'(x)\colon\mathbb X\to\R$ denotes its derivative
at $x\in\mathbb X$, and we use $\nabla f(x):=f'(x)^*1$ where $f'(x)^*\colon\R\to\mathbb X$ 
is the adjoint of $f'(x)$.
So, $\nabla f$ is a mapping from $\mathbb X$ to $\mathbb X$.

A sequence $ \{ x^k \} \subset \mathbb X $ is said to
converge \emph{Q-linearly} to $ x^*\in\mathbb X $ if there is a constant
$ c \in (0,1) $ such that the inequality
\[
   \| x^{k+1} - x^* \| \leq c \| x^k - x^* \|
\]
holds for all sufficiently large $ k \in \N $. Furthermore,
$\{x^k\}$ converges \emph{R-linearly} to $ x^* $ if and only if we
have
\[
   \limsup_{k \to \infty} \| x^k - x^* \|^{1/k} < 1.
\]
Note that the R-linear convergence holds if there 
exist constants $ a > 0 $ and $ b \in (0,1) $ such that
$ \| x^k - x^* \| \leq a b^k $ holds for all sufficiently
large $ k\in\N $, in other words, $ \| x^k - x^* \| $ is 
dominated by a Q-linearly convergent null sequence.

Let a function $q\colon\mathbb X\to\overline\R$ be merely lower semicontinuous
and let $x\in\dom q$ where $\dom q:=\{x\in\mathbb X\,|\,q(x)<\infty\}$
denotes the domain of $q$. Then
\[
	\widehat\partial q(x)
	:=
	\left\{
		\eta\in\mathbb X\,\middle|\,
		\liminf\limits_{y\to x,\,y\neq x}
		\frac{q(y)-q(x)-\langle \eta,y-x\rangle}{\norm{y-x}}\geq 0
	\right\}
\] 
is called the {\em regular} (or {\em Fr\'{e}chet}) {\em subdifferential} of $q$ at $x$.
Furthermore, 
\[
	\partial q(x)
	:=
	\left\{
		\eta\in\mathbb X\,\middle|\,
		\begin{aligned}
		&\exists\{x^k\},\{\eta^k\}\subset\mathbb X\colon\\
		&\qquad
		x^k\to x,\, q(x^k)\to q(x),\,\eta^k\to\eta,\,
		\eta^k\in\widehat{\partial}q(x^k)\,\forall k\in\N
		\end{aligned}
	\right\}
\]
is denoted as the {\em limiting} (or {\em Mordukhovich}) 
{\em subdifferential} of $q$ at $x$.
Naturally, $\widehat{\partial}q(x)\subset\partial q(x)$ always holds.
Whenever $q$ is convex,  both subdifferentials coincide, i.e.,
\[
	\widehat{\partial}q(x)
	=
	\partial q(x)
	=
	\{
		\eta\in\mathbb X\,|\,
		\forall y\in\dom q \colon\,q(y)\geq q(x)+\langle\eta,y-x\rangle
	\}.
\]
By Fermat's rule \cite[Proposition~1.30(i)]{Mordukhovich2018},
whenever $x^*\in\dom q$ is a local minimizer
of $q$, then $0\in\widehat\partial q(x^*)$ hold, consequently $ 0 \in \partial q(x^*) $ holds as well.

Since $f$ is continuously differentiabble, then for any fixed $x\in\dom\phi$, the sum rule
\begin{equation}\label{eq:sum_rule}
		\partial (f+\phi)(x)
		=
		\nabla f(x)+ \partial \phi(x)
\end{equation}
holds,  see \cite[Proposition~1.30(ii)]{Mordukhovich2018}.
Fermat's rule shows that the optimality
condition
\begin{equation*}
	0 \in \nabla f (x^*)+ \partial \phi(x^*)
\end{equation*}
holds at any local minimizer $ x^*\in\dom\phi $ of \eqref{Eq:P}. Any point $ x^* \in \dom \phi $ satisfying
this necessary optimality condition is called an 
\emph{M-stationary point} of \eqref{Eq:P}.

We next introduce the Kurdyka--{\L}ojasiewicz (KL) property which plays a 
vital role in this manuscript. The 
definition stated below is a generalization of the classical 
KL inequality for nonsmooth functions as
introduced in
\cite{AttouchBolteRedontSoubeyran2010,BolteDaniilidisLewis2007,BolteDaniilidisLewisShiota2007}
and afterwards used in the convergence analysis of many
nonsmooth optimization algorithms, see
\cite{attouch2009convergence,AttouchBolteSvaiter2013,BolteSabachTeboulle2014,BotCsetnek2016,BotCsetnekLaszlo2016,Ochs2018,OchsChenBroxPock2014}
for a couple of examples.

\begin{definition}\label{Def:KL-property}
Let $ g\colon \mathbb X \to \overline{\R} $ be 
lower semicontinuous. We say that $ g $ has the 
\emph{KL property}
at $ x^* \in \{ x \in \mathbb X \,|\, \partial g(x)\neq\emptyset \} $ if there exist a constant 
$ \eta > 0 $, a neighborhood $ U\subset\mathbb X $ of $ x^* $, and a continuous
concave function $ \chi\colon[0, \eta] \to [0,\infty) $ 
which is continuously differentiable on $(0,\eta)$ and satisfies $\chi(0)=0$ as well as
$\chi'(t)>0$ for all $t\in(0,\eta)$
such that the so-called \emph{KL inequality}
\[
   \chi ' \big( g(x) - g(x^*) \big) \dist \big( 0, \partial 
   g(x) \big) \geq 1
\]
holds for all $ x \in U \cap \big\{ x \in \mathbb X \,|\, 
g(x^*) < g(x) < g(x^*) + \eta \big\} $.
The function $ \chi $ is denoted as the \emph{desingularization function}.
\end{definition}

A popular example of the desingularization function is
given by $ \chi (t) := c t^{\theta} $ for 
$ \theta \in (0,1] $ and some constant $ c > 0 $, where the
parameter $ \theta $ is called the \emph{KL exponent},
see \cite{BolteDaniilidisLewisShiota2007,Kurdyka1998}.

\section{The NPG Method with Average  Line Search and Convergence Results}\label{Sec:Alg and Res}
We first recall the nonmonotone proximal gradient method with average line search in \Cref{Alg:NonMonotoneProxGrad}.
\begin{algorithm}\caption{A Nonmonotone Proximal Gradient Method with Average Line Search}
	\label{Alg:NonMonotoneProxGrad}
	\begin{algorithmic}[1]
		\REQUIRE $\tau > 1$, $0 < \gamma_{\min} \leq  \gamma_{\max} < \infty$, 
			$\delta \in (0,1)$, $\frac{4}{5} \leq p_{\min}\leq 1$, $x^0 \in \dom q $. Choose a positive sequence $\{p_k\}$ such that $p_k \in [p_{\min}, 1]$.
		\STATE Set $k := 0$ and $\Phi_0 \leftarrow q(x^0)$.
		\WHILE{A suitable termination criterion is violated at iteration $ k $}
		\STATE Choose $ \gamma_k^0 \in [ \gamma_{\min}, \gamma_{\max}] $.
		\STATE\label{step:subproblem_solve_MonotoneProxGrad} 
			For $ i = 0, 1, 2, \ldots $, compute a solution $ x^{k,i} $ of
      		\begin{equation}\label{Eq:NonSubki}
         		\min_{x  \in \mathbb X} \ f (x^k) + \langle\nabla f(x^k), x - x^k \rangle + \frac{\gamma_{k,i}}{2} \| x - x^k \|^2 + g (x)
      		\end{equation}
      		with $ \gamma_{k,i} := \tau^i \gamma_k^0 $, until the acceptance criterion
      		\begin{equation}\label{Eq:NonStepCrit}
         		q(x^{k,i}) \leq 
         		\Phi_k - \delta \frac{\gamma_{k,i}}{2} \| x^{k,i} - x^k \|^2 
      		\end{equation}
      		holds.
		\STATE \label{item:remark} Denote $ i_k := i $ as the terminal value, and set $ \gamma_k := 
      			\gamma_{k,i_k} $ and $ x^{k+1} := x^{k,i_k} $.
                                    \STATE \label{Eq:defPhi}
                  Set 
                  $\Phi_{k+1}:=(1-p_k)\Phi_{k}+p_kq(x^{k+1})$.
      	\STATE Set $ k \leftarrow k + 1 $.
		\ENDWHILE
		\RETURN $x^k$
	\end{algorithmic}
\end{algorithm}

In order to guarantee the convergence of \Cref{Alg:NonMonotoneProxGrad}, we introduce the following requirements.
\begin{assumption}\label{Ass:ProxGradNonMonotone}
\leavevmode
\begin{enumerate}[(a)]
   \item \label{itemnon:psi_bounded} The function $ q $ is bounded from below on $ \dom g $.
   \item \label{itemnon:phi_bounded_affine} The function  $ g$ is bounded from below by an affine function.
   \item \label{itemnon:local_Lipschitz} The function
   $ \nabla f\colon \mathbb X \to \mathbb X $ is locally Lipschitz continuous.
\end{enumerate}
\end{assumption}
Note that \cref{Ass:ProxGradNonMonotone}~\ref{itemnon:psi_bounded} and \ref{itemnon:phi_bounded_affine}\ 
 are the same as  those presented in \cite[Assumption~3.1]{KanzowMehlitz2022}.  \cref{Ass:ProxGradNonMonotone}~\ref{itemnon:psi_bounded} guarantees that the optimization problem \eqref {Eq:P} is solvable.  \cref{Ass:ProxGradNonMonotone}~\ref{itemnon:phi_bounded_affine} implies that $g$ is coercive,  which consequently indicates that subproblems \eqref{Eq:NonSubki} always have solutions,  and hence the algorithm is well-defined.

The combination of \cite[Lemma~3.1]{KanzowMehlitz2022} and \cite[Lemma~4.1]{de2023proximal} implies that \Cref{Alg:NonMonotoneProxGrad} is well-defined. 
The following results are fundamental to the convergence analysis.
\begin{proposition}\label{Prof:Nonresults}   
Let \cref{Ass:ProxGradNonMonotone} hold and $\{x^k\}_{k\in \mathbb N}$ be any sequence generated by \Cref{Alg:NonMonotoneProxGrad}, then
\begin{enumerate}[(a)]
\item \label{item:decreasingPhi} The sequence $\{\Phi_k\}_{k\in \mathbb N}$ is monotonically decreasing and 
\begin{equation}\label{Eq:decreasingPhi}
q(x^{k+1})+\delta\frac{(1-p_{k})\gamma_k}{2}\|x^{k+1}-x^k\|^2 \leq \Phi_{k+1} \leq \Phi_k-\delta\frac{p_{k}\gamma_k}{2}\|x^{k+1}-x^k\|^2.
\end{equation}
\item\label{Item:level set of q} Every iterate $x^k$ remains in the sublevel set $\mathcal L_q(x^0):=\{x\in \dom q \,|\, q(x) \leq q(x^0)\} \subset \dom g$.
\item \label{item:convergence q and Phi} Both $\{q(x^k)\}_{k \in \mathbb N}$ and $\{\Phi_k\}_{k \in \mathbb N}$ converge to some finite value $q_* \geq \inf q$.
\item \label{item:convergence of sequence} $\lim_{k \to \infty} \|x^{k+1}-x^k \| =0$.
\item \label{item: Phiq}$\Phi_k \geq q(x^k)$ holds for all $k\in \mathbb N$.
\end{enumerate}
\end{proposition}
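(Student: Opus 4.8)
The plan is to establish the five assertions in the order \ref{item:decreasingPhi}, \ref{item: Phiq}, \ref{Item:level set of q}, \ref{item:convergence q and Phi}, \ref{item:convergence of sequence}, since each later one rests on the earlier ones. Everything reduces to careful bookkeeping with the recursion $\Phi_{k+1}=(1-p_k)\Phi_k+p_kq(x^{k+1})$ from Step~\ref{Eq:defPhi} together with the accepted form of the line-search test \eqref{Eq:NonStepCrit}, namely $q(x^{k+1})\le\Phi_k-\delta\frac{\gamma_k}{2}\norm{x^{k+1}-x^k}^2$ with $\gamma_k=\tau^{i_k}\gamma_k^0$; well-definedness of \Cref{Alg:NonMonotoneProxGrad} has already been recorded, so I may assume the sequence exists. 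For \ref{item:decreasingPhi}, substituting the acceptance test into the recursion yields the right-hand inequality of \eqref{Eq:decreasingPhi}, while bounding the term $(1-p_k)\Phi_k$ from below by the same test yields the left-hand one; monotone decrease of $\{\Phi_k\}$ then follows immediately because $\delta,p_k,\gamma_k>0$. For \ref{item: Phiq} I would argue by induction on $k$: the base case is $\Phi_0=q(x^0)$, and $\Phi_{k+1}=(1-p_k)\Phi_k+p_kq(x^{k+1})\ge q(x^{k+1})$ follows from $q(x^{k+1})\le\Phi_k$ alone, so \ref{item: Phiq} is in fact a corollary of \ref{item:decreasingPhi}.

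Item \ref{Item:level set of q} is then obtained by combining $q(x^k)\le\Phi_k$ from \ref{item: Phiq} with $\Phi_k\le\Phi_0=q(x^0)$ from the monotonicity in \ref{item:decreasingPhi}; the inclusion $\mathcal L_q(x^0)\subset\dom g$ holds because $f$ is finite-valued, hence $\dom q=\dom g$. For \ref{item:convergence q and Phi}, the sequence $\{\Phi_k\}$ is nonincreasing by \ref{item:decreasingPhi} and bounded below because $\Phi_k\ge q(x^k)\ge\inf q>-\infty$ by \ref{item: Phiq} and \cref{Ass:ProxGradNonMonotone}~\ref{itemnon:psi_bounded}, so $\Phi_k\to q_*$ for some $q_*\ge\inf q$. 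To carry the convergence over to $\{q(x^k)\}$, which need not itself be monotone, I would rewrite the recursion as $q(x^{k+1})=\Phi_k+(\Phi_{k+1}-\Phi_k)/p_k$ and use that $\Phi_{k+1}-\Phi_k\to0$ while $p_k\ge p_{\min}>0$ stays bounded away from zero, which forces $q(x^{k+1})\to q_*$.

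Finally, for \ref{item:convergence of sequence}, the right-hand inequality of \eqref{Eq:decreasingPhi} together with $p_k\ge p_{\min}$ and $\gamma_k=\tau^{i_k}\gamma_k^0\ge\gamma_{\min}$ gives $\frac{\delta p_{\min}\gamma_{\min}}{2}\norm{x^{k+1}-x^k}^2\le\Phi_k-\Phi_{k+1}$; telescoping over $k$ and using $\Phi_k\downarrow q_*$ yields $\sum_{k\in\mathbb N}\norm{x^{k+1}-x^k}^2\le\frac{2}{\delta p_{\min}\gamma_{\min}}(\Phi_0-q_*)<\infty$, and in particular $\norm{x^{k+1}-x^k}\to0$. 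I do not foresee a deep obstacle; the closest thing to a genuine difficulty is the convergence of $\{q(x^k)\}$ in \ref{item:convergence q and Phi} --- one must resist proving it monotonically, since it need not be monotone for a nonmonotone method, and instead recover it from the convergence of $\{\Phi_k\}$ through the recursion, which is exactly where $p_{\min}>0$ is used; a second, minor point is exploiting $\gamma_k\ge\gamma_{\min}$ (and not merely $\gamma_k^0\le\gamma_{\max}$) so that the constant in the telescoped estimate for \ref{item:convergence of sequence} stays uniformly positive.
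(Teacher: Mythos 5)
Your proof is correct and follows essentially the same route as the paper: the paper proves only item \ref{item: Phiq} directly (deferring \ref{item:decreasingPhi}--\ref{item:convergence of sequence} to the cited results of de~Marchi/Themelis), and its computation for \ref{item: Phiq} is exactly your left-hand inequality in \eqref{Eq:decreasingPhi} obtained from the acceptance test \eqref{Eq:NonStepCrit} inside the recursion for $\Phi_{k+1}$. The arguments you supply for the remaining items (convex-combination recursion, $q(x^{k+1})=\Phi_k+(\Phi_{k+1}-\Phi_k)/p_k$ with $p_k\geq p_{\min}$, and telescoping with $\gamma_k\geq\gamma_{\min}$) are the standard ones behind those cited lemmas, so there is nothing to flag.
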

\begin{proof}
Note that \ref{item:decreasingPhi}$-$\ref{item:convergence of sequence} have been observed in \cite[Lemma~4.2]{de2023proximal} and \cite[Lemma~4.3]{de2023proximal},  so we omit the proof.  It remains to consider \ref{item: Phiq}.

When $k=0$, one has $\Phi_0=q(x^0)$ by the initialization in \Cref{Alg:NonMonotoneProxGrad}, then for any
$k>0$, 
 from \eqref{Eq:NonStepCrit},  Steps \ref{item:remark} and \ref{Eq:defPhi}, one has
\begin{equation*}
\begin{aligned}
\Phi_k &:=(1-p_{k-1})\Phi_{k-1}+p_{k-1}q(x^{k})\\ 
&\geq (1-p_{k-1})\left(q(x^{k})+\delta \frac{\gamma_{k-1}}{2}\|x^k-x^{k-1}\|^2\right)+p_{k-1}q(x^k)\\
&=q(x^k)+\delta \frac{(1-p_{k-1})\gamma_{k-1}}{2}\|x^k-x^{k-1}\|^2 \geq q(x^k).
\end{aligned}
\end{equation*}
Hence, one has
\begin{equation*}
\Phi_k \geq q(x^k) \quad \forall k\in \mathbb N.
\end{equation*}
\end{proof}
\indent With the aid of \Cref{Prof:Nonresults}, 
we will obtain the following result.  
\begin{lemma}\label{Lem:tools}
Let \cref{Ass:ProxGradNonMonotone} hold, $\{x^k\}_{k\in \mathbb N}$ be any sequence generated by \Cref{Alg:NonMonotoneProxGrad}, and $\bar x$ be an accumulation point of $\{x^k\}_{k\in \mathbb N}$. Suppose that $\{x^k\}_{k \in \K}$ is a subsequence converging to some point $\bar x$, then $\gamma_k\|x^{k+1}-x^k \| \to_{\K} 0$ holds.
\end{lemma}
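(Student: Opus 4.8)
The goal is to show $\gamma_k\|x^{k+1}-x^k\|\to_{\K}0$, where the $\gamma_k$ could in principle blow up (they are not assumed bounded, since we only have local Lipschitz continuity of $\nabla f$). The strategy I would use splits according to whether $\{\gamma_k\}_{k\in\K}$ has a bounded subsequence or not.

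First, the easy regime: from \Cref{Prof:Nonresults}\ref{item:convergence of sequence} we already know $\|x^{k+1}-x^k\|\to 0$ for the full sequence. Hence along any subsequence $\K'\subseteq\K$ on which $\{\gamma_k\}$ stays bounded, say $\gamma_k\le\Gamma$, we immediately get $\gamma_k\|x^{k+1}-x^k\|\le\Gamma\|x^{k+1}-x^k\|\to_{\K'}0$. So the substantive case is a subsequence $\K''\subseteq\K$ with $\gamma_k\to_{\K''}\infty$.

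Now I would exploit the line-search mechanism. On $\K''$, since $\gamma_k=\tau^{i_k}\gamma_k^0\to\infty$ and $\gamma_k^0\ge\gamma_{\min}$, we must have $i_k\ge 1$ eventually, meaning the previous trial stepsize $\gamma_k/\tau=\gamma_{k,i_k-1}$ was \emph{rejected}: the acceptance criterion \eqref{Eq:NonStepCrit} failed at that index, i.e.
\[
q(x^{k,i_k-1}) > \Phi_k - \delta\frac{\gamma_k/\tau}{2}\|x^{k,i_k-1}-x^k\|^2 .
\]
Using $\Phi_k\ge q(x^k)$ (\Cref{Prof:Nonresults}\ref{item: Phiq}) and the definition of $x^{k,i_k-1}$ as a minimizer of the model \eqref{Eq:NonSubki} with parameter $\gamma_k/\tau$, I would compare the model value at $x^{k,i_k-1}$ with the model value at $x^k$ (which equals $f(x^k)+g(x^k)$). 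This yields a bound of the form
\[
f(x^k)+\langle\nabla f(x^k),x^{k,i_k-1}-x^k\rangle+\frac{\gamma_k/\tau}{2}\|x^{k,i_k-1}-x^k\|^2+g(x^{k,i_k-1})\le f(x^k)+g(x^k),
\]
so in particular $\|x^{k,i_k-1}-x^k\|$ is controlled (and one can show it tends to $0$ along $\K''$, since otherwise the quadratic term dominates and contradicts the boundedness coming from the sublevel-set/coercivity of $g$, i.e. \Cref{Prof:Nonresults}\ref{Item:level set of q} together with \cref{Ass:ProxGradNonMonotone}\ref{itemnon:phi_bounded_affine}). Because $x^k\to_{\K}\bar x$ and $\|x^{k,i_k-1}-x^k\|\to_{\K''}0$, all these auxiliary points lie in a fixed compact neighborhood $B$ of $\bar x$ for $k\in\K''$ large; on $B$, $\nabla f$ is Lipschitz with some constant $L_B$, so the descent lemma gives $f(x^{k,i_k-1})\le f(x^k)+\langle\nabla f(x^k),x^{k,i_k-1}-x^k\rangle+\frac{L_B}{2}\|x^{k,i_k-1}-x^k\|^2$. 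Combining this with the rejection inequality above and cancelling $f(x^k)+g(x^k)$ and $g(\cdot)$-terms appropriately, everything of order $\|x^{k,i_k-1}-x^k\|^2$ collects to give
\[
\Bigl(\frac{\gamma_k}{\tau}\Bigr)(1-\delta)\,\frac12\|x^{k,i_k-1}-x^k\|^2 \;\le\; \frac{L_B}{2}\|x^{k,i_k-1}-x^k\|^2,
\]
hence $\gamma_k/\tau\le L_B/(1-\delta)$, i.e. $\gamma_k\le \tau L_B/(1-\delta)$, contradicting $\gamma_k\to_{\K''}\infty$. Therefore no such subsequence $\K''$ exists, $\{\gamma_k\}_{k\in\K}$ is bounded, and the first (easy) regime applies to conclude $\gamma_k\|x^{k+1}-x^k\|\to_{\K}0$.

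The main obstacle is the bookkeeping around the \emph{rejected} trial point $x^{k,i_k-1}$: one must first argue it is well-defined (i.e. $i_k\ge1$), then that $\|x^{k,i_k-1}-x^k\|\to_{\K''}0$ so that a single Lipschitz constant $L_B$ valid on a fixed neighborhood of $\bar x$ can be used for all large $k\in\K''$, and finally carry the descent-lemma estimate through cleanly. This is exactly the local-Lipschitz adaptation of the classical boundedness-of-$\gamma_k$ argument (as in \cite[Lemma~3.1]{KanzowMehlitz2022}); none of the individual steps is deep, but the interplay between the auxiliary sequence, the neighborhood, and the line-search inequality needs care.
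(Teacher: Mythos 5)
Your proposal is correct, but it takes a genuinely different route from the paper. The paper does \emph{not} try to rule out $\gamma_k\to_\K\infty$ inside this lemma: in the unbounded case it keeps the rejected trial point $\hat x^k=x^{k,i_k-1}$, shows $\|\hat x^k-x^k\|\to_\K 0$ exactly as you do, but then applies the mean value theorem (using only continuity of $\nabla f$, no Lipschitz constant) to get $(1-\delta)\tfrac{\hat\gamma_k}{2}\|\hat x^k-x^k\|<\|\nabla f(x^k)-\nabla f(\xi^k)\|\to_\K 0$, so the \emph{product} $\hat\gamma_k\|\hat x^k-x^k\|$ vanishes even though $\hat\gamma_k$ may blow up; it then compares the minimality of $x^{k+1}$ and $\hat x^k$ in their respective subproblems to get $\|x^{k+1}-x^k\|\le\|\hat x^k-x^k\|$ and concludes $\gamma_k\|x^{k+1}-x^k\|=\tau\hat\gamma_k\|x^{k+1}-x^k\|\to_\K 0$. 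You instead invoke the local Lipschitz constant $L_B$ and the descent lemma on a fixed ball around $\bar x$ to derive $(1-\delta)\hat\gamma_k< L_B$, contradicting unboundedness, so that $\{\gamma_k\}_{k\in\K}$ is bounded and the lemma follows trivially from \Cref{Prof:Nonresults}\ref{item:convergence of sequence}. This is essentially the argument the paper defers to \Cref{Lem:stepsize is bounded somewhere} (and \cite[Lemma~3.2]{KanzowMehlitz2022}), so you are in effect proving a subsequential version of that stepsize-boundedness result first and reversing the paper's order; your route is shorter for this lemma and yields the stronger boundedness information directly, while the paper's route isolates the step that needs only continuity of $\nabla f$ and avoids the two-solution comparison being replaced by a Lipschitz estimate. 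One small point to make explicit when you divide by $\|\hat x^k-x^k\|^2$: the rejection inequality together with $\Phi_k\ge q(x^k)$ forces $\hat x^k\neq x^k$, so the division is legitimate; otherwise your displayed inequality would be $0\le 0$ and give no contradiction.
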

\begin{proof}
If the sequence $\{\gamma_k\}_\K$ is bounded, the statement follows directly from \cref{Prof:Nonresults}~\ref{item:convergence of sequence}. It remains to consider the case where this subsequence is unbounded. Without loss of generality, we assume that $\gamma_k \to_{\K} \infty$ and then, for the trial stepsize 
$ \hat \gamma_k := \gamma_k / \tau = \tau^{i_k-1}\gamma_k^0$, 
one also has $ \hat \gamma_k \to_\K \infty $, 
whereas the corresponding trial vector $ \hat x^k := x^{k, i_k-1} $
does not satisfy the acceptance criterion from \eqref{Eq:NonStepCrit}, together with \cref{Prof:Nonresults} \ref{item: Phiq}, one has
\begin{equation}\label{Eqnon:3-12}
	q (\hat x^k) > \Phi_k - \delta \frac{\hat \gamma_k}{2}
	\| \hat x^k - x^k \|^2 \geq  q (x^{k}) - \delta \frac{\hat \gamma_k}{2}
	\| \hat x^k - x^k \|^2\quad \forall k \in \K.
\end{equation}
On the other hand, since $ \hat x^k $ solves the corresponding
subproblem \eqref{Eq:NonSubki} with $ \hat \gamma_k $, one has
\begin{equation}\label{Eqnon:3-13}
	\langle \nabla f(x^k), \hat x^k - x^k \rangle + 
\frac{\hat \gamma_k}{2} \| \hat x^k - x^k \|^2 +
	g (\hat x^k) - g (x^k) \leq 0
\end{equation}
holds for all $k\in \K$. We immediately obtain that $ \|\hat x^k -x^k\|\to_\K 0 $ (otherwise, the left-hand side in \eqref{Eqnon:3-13} goes to infinity) and consequently $\hat x^k \to_\K \bar x$ from \cref{Ass:ProxGradNonMonotone}~\ref{itemnon:phi_bounded_affine}. By \cref{Ass:ProxGradNonMonotone}~\ref{itemnon:psi_bounded},  the mean-value theorem yields that there exists $ \xi^k $ 
on the line segment connecting $ x^k $ with $ \hat x^k $ such that 
\begin{align*}
	q (\hat x^k) - q (x^k) 
	&= 
	f(\hat x^k) + g (\hat x^k)-f(x^k) - g(x^k) 
	\\
	&= 
	\langle \nabla f(\xi^k) , \hat x^k - x^k \rangle + g (\hat x^k) - g(x^k).
\end{align*}
Recall \eqref{Eqnon:3-13}, we have
\begin{equation}\label{Eq:3-14a}
	\langle \nabla f(x^k) - \nabla f(\xi^k) ,
\hat x^k - x^k \rangle + \frac{\hat \gamma_k}{2}
	\| \hat x^k - x^k \|^2 + q (\hat x^k) - q(x^k) \leq 0
\end{equation}
for all $k\in \K$.
Exploiting \eqref{Eqnon:3-12}, one therefore obtains
\begin{align*}
	\frac{\hat \gamma_k}{2} \| \hat x^k - x^k \|^2 
	& \leq - \langle \nabla f(x^k) - \nabla f(\xi^k) ,
	\hat x^k - x^k \rangle + q (x^k) - q(\hat x^k) \\
	& < \| \nabla f(x^k) - \nabla f(\xi^k ) \| \| \hat x^k -
	x^k \| + \delta \frac{\hat \gamma_k}{2} \| \hat x^k - x^k
	\|^2 ,
\end{align*}
which can be rewritten as
\[
	( 1 - \delta ) \frac{\hat \gamma_k}{2} \| \hat x^k - x^k \| 
	< \| \nabla f(x^k) - \nabla f (\xi^k) \|.
\]
Since $ \xi^k $ is an element from the line connecting
$ x^k $ and $ \hat x^k $, it follows that $ \xi^k \to_\K \bar x$. Recall that $f$ is continuously differentiable, one has ${\hat \gamma_k}\| \hat x^k - x^k \| \to_\K 0$.\\
\indent Exploiting the fact that $x^{k+1}$ and $\hat x^k$ are solutions of the subproblem \eqref{Eq:NonSubki} with stepsize $\gamma_k$ and $\hat \gamma_k$, respectively, we obtain that
\begin{equation*}
\begin{aligned}
\langle \nabla f(x^k), x^{k+1}-x^k \rangle&+\frac{\gamma_k}{2}\|x^{k+1}-x^k\|^2+g(x^{k+1}) \\
& \leq \langle \nabla f(x^k), \hat x^k-x^k \rangle+\frac{\gamma_k}{2}\|\hat x^k-x^k\|^2+g(\hat x^k)
\end{aligned}
\end{equation*}
and
\begin{equation*}
\begin{aligned}
 \langle \nabla f(x^k), \hat x^k-x^k \rangle&+\frac{\hat \gamma_k}{2}\|\hat x^k-x^k\|^2+g(\hat x^k) \\
& \leq \langle \nabla f(x^k), x^{k+1}-x^k \rangle+\frac{\hat \gamma_k}{2}\|x^{k+1}-x^k\|^2+g(x^{k+1}).
\end{aligned}
\end{equation*}
Adding the two inequalities together, then the fact that $\gamma_k=\tau \hat \gamma_k$ implies $\|x^{k+1}-x^k \| \leq \|\hat x^k-x^k \|$, and therefore,
\begin{equation*}
\gamma_k\|x^{k+1}-x^k\|=\tau \hat \gamma_k \|x^{k+1}-x^k \| \leq \tau \hat \gamma_k\|\hat x^k-x^k \| \to_\K 0.
\end{equation*}
This completes the proof.
\end{proof}
\indent By means of the proof of \Cref{Lem:tools} and the technique outlined in \cite[Lemma~4.1]{jia2023convergence}, we immediately obtain the following desired result, i.e.,  around the accumulation point $\bar x$ of $\{x^k\}_{k\in \mathbb N}$, the associated stepsize sequence remains (uniformly) bounded.  We omit the proof because it closely resembles that of \cite[Lemma~4.1]{kanzow2024convergence}.
\begin{lemma}\label{Lem:stepsize is bounded somewhere}
Let \cref{Ass:ProxGradNonMonotone} hold,  $\{x^k\}_{k \in \mathbb N}$ be any sequence generated by \Cref{Alg:NonMonotoneProxGrad}, and $\bar x$ be its accumulation point. Then, for any $\rho>0$, there is a constant $\bar \gamma_\rho>0$ (depending on $\rho$) such that $\gamma_k \leq \bar \gamma_\rho$ for all $k\in \mathbb N$ such that $x^k \in B_\rho(\bar x)$.
\end{lemma}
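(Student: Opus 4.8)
The plan is to argue by contradiction, transplanting the estimate chain from the proof of \Cref{Lem:tools} into a fixed enlarged ball around $\bar x$. Suppose the assertion fails: then there are $\rho>0$ and an infinite index set $\K\subset\mathbb N$ with $x^k\in B_\rho(\bar x)$ for all $k\in\K$, yet $\gamma_k\to_\K\infty$. Since $i_k=0$ forces $\gamma_k=\gamma_k^0\le\gamma_{\max}$, we may assume $i_k\ge 1$ for all $k\in\K$, so that the last rejected trial stepsize $\hat\gamma_k:=\gamma_k/\tau=\tau^{i_k-1}\gamma_k^0\to_\K\infty$ is well defined and the trial vector $\hat x^k:=x^{k,i_k-1}$ violates the acceptance criterion \eqref{Eq:NonStepCrit}.

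The first step is to show $\norm{\hat x^k-x^k}\to_\K 0$; this is where the argument genuinely departs from \Cref{Lem:tools}, because here $\{x^k\}_\K$ is only known to lie in $B_\rho(\bar x)$ rather than to converge. Since $\hat x^k$ solves \eqref{Eq:NonSubki} with stepsize $\hat\gamma_k$, the optimality inequality \eqref{Eqnon:3-13} holds and, rearranged, gives
\[
	\frac{\hat\gamma_k}{2}\norm{\hat x^k-x^k}^2\le\norm{\nabla f(x^k)}\,\norm{\hat x^k-x^k}+g(x^k)-g(\hat x^k).
\]
On the bounded set $B_\rho(\bar x)$, $\norm{\nabla f(x^k)}$ is bounded by continuity of $\nabla f$, and $g(x^k)=q(x^k)-f(x^k)\le q(x^0)-f(x^k)$ is bounded above by \cref{Prof:Nonresults}~\ref{Item:level set of q} and continuity of $f$; moreover, writing $g(\cdot)\ge\langle a,\cdot\rangle+b$ for the affine minorant from \cref{Ass:ProxGradNonMonotone}~\ref{itemnon:phi_bounded_affine}, one has $-g(\hat x^k)\le -\langle a,\hat x^k\rangle-b\le C+\norm{a}\,\norm{\hat x^k-x^k}$ with $C$ bounding $-\langle a,x^k\rangle-b$ over $B_\rho(\bar x)$. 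Hence $\frac{\hat\gamma_k}{2}\norm{\hat x^k-x^k}^2\le C_1\norm{\hat x^k-x^k}+C_2$ with $C_1,C_2$ independent of $k\in\K$, and since $\hat\gamma_k\to_\K\infty$ this forces $\norm{\hat x^k-x^k}\to_\K 0$. Consequently, for all large $k\in\K$ the points $x^k$, $\hat x^k$ and the segment joining them all lie in the closed ball $B_{2\rho}(\bar x)$, on which $\nabla f$ — being locally Lipschitz on the compact convex set $B_{2\rho}(\bar x)$ — is Lipschitz with some constant $L=L_{2\rho}>0$.

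The remaining step repeats the estimates of \Cref{Lem:tools} verbatim. The mean value theorem applied to $f$ produces $\xi^k$ on the segment $[x^k,\hat x^k]\subset B_{2\rho}(\bar x)$ with $q(\hat x^k)-q(x^k)=\langle\nabla f(\xi^k),\hat x^k-x^k\rangle+g(\hat x^k)-g(x^k)$, and combining this with \eqref{Eqnon:3-13} yields \eqref{Eq:3-14a}. The violation of \eqref{Eq:NonStepCrit} together with $\Phi_k\ge q(x^k)$ (\cref{Prof:Nonresults}~\ref{item: Phiq}) gives $q(x^k)-q(\hat x^k)<\delta\frac{\hat\gamma_k}{2}\norm{\hat x^k-x^k}^2$; plugging this into \eqref{Eq:3-14a} and invoking Cauchy--Schwarz and the Lipschitz bound on $\nabla f$ yields
\[
	(1-\delta)\frac{\hat\gamma_k}{2}\norm{\hat x^k-x^k}^2<\norm{\nabla f(x^k)-\nabla f(\xi^k)}\,\norm{\hat x^k-x^k}\le L\,\norm{\hat x^k-x^k}^2.
\]
Here $\hat x^k\ne x^k$, since otherwise $q(\hat x^k)=q(x^k)\le\Phi_k$ and \eqref{Eq:NonStepCrit} would hold, contradicting its violation; dividing by $\norm{\hat x^k-x^k}^2$ leaves $(1-\delta)\hat\gamma_k/2<L$ for all large $k\in\K$, in contradiction to $\hat\gamma_k\to_\K\infty$. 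This proves the lemma; one may take $\bar\gamma_\rho:=\max\{\gamma_{\max},\,2\tau L_{2\rho}/(1-\delta)\}$.

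I expect the main obstacle to be precisely the bootstrapping in the middle paragraph. In \Cref{Lem:tools} the convergence $x^k\to_\K\bar x$ is a hypothesis and instantly supplies a Lipschitz constant near $\bar x$; here one must first extract $\norm{\hat x^k-x^k}\to_\K 0$ from the subproblem inequality and the affine lower bound on $g$, and only afterwards is one entitled to a Lipschitz constant on a fixed ball $B_{2\rho}(\bar x)$ known a priori to contain all of $x^k$, $\hat x^k$ and $\xi^k$ for large $k\in\K$. Once that is in place, the remainder is identical to the computation already performed for \Cref{Lem:tools}.
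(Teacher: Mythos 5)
Your proof is correct and takes essentially the same route as the one the paper appeals to (the contradiction scheme behind \Cref{Lem:tools} and \cite[Lemma~4.1]{kanzow2024convergence}: the affine minorant of $g$ together with the subproblem inequality forces $\|\hat x^k-x^k\|\to_\K 0$, after which a Lipschitz constant of $\nabla f$ on the fixed compact ball $B_{2\rho}(\bar x)$ yields the contradiction with $\hat\gamma_k\to_\K\infty$). The only blemish is the closing explicit formula $\bar\gamma_\rho=\max\{\gamma_{\max},2\tau L_{2\rho}/(1-\delta)\}$, which is not literally delivered by the argument (one must also include the threshold on $\hat\gamma_k$ ensuring $\hat x^k\in B_{2\rho}(\bar x)$), but since the lemma only asserts existence of some bound this is immaterial.
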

The following result refers to the subsequential convergence of \Cref{Alg:NonMonotoneProxGrad}. 
\begin{theorem}\label{Thm:M-sta}
Let \cref{Ass:ProxGradNonMonotone} hold. Then each accumulation point $\bar x$ of a sequence $\{x^k\}_{k\in \mathbb N}$ generated by \Cref{Alg:NonMonotoneProxGrad} is a stationary point of \eqref{Eq:P}, and $q_*=q(\bar x)$.
\end{theorem}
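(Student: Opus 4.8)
The plan is to fix an accumulation point $\bar x$ together with a subsequence $\{x^k\}_{k\in\K}$ with $x^k\to_\K\bar x$, and to pass to the limit along $\K$ in the first-order optimality condition of the proximal subproblem \eqref{Eq:NonSubki} that defines $x^{k+1}$. Before doing so I would collect the facts that keep this limit under control. Since $f$ is continuous and $g$ is lower semicontinuous, $q=f+g$ is lower semicontinuous, so \cref{Prof:Nonresults}~\ref{item:convergence q and Phi} gives $q(\bar x)\le\liminf_{k\in\K}q(x^k)=q_*<\infty$; in particular $\bar x\in\dom q=\dom g$, so $\bar x$ is an admissible test point for \eqref{Eq:NonSubki}. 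By \cref{Lem:stepsize is bounded somewhere} (say with $\rho=1$), and since $x^k\in B_1(\bar x)$ for all large $k\in\K$, the tail $\{\gamma_k\}_{k\in\K}$ is bounded. From \cref{Prof:Nonresults}~\ref{item:convergence of sequence} we have $x^{k+1}-x^k\to 0$, hence $x^{k+1}\to_\K\bar x$, and from \cref{Lem:tools} we have $\gamma_k\|x^{k+1}-x^k\|\to_\K 0$, which together with $x^{k+1}-x^k\to 0$ gives $\gamma_k\|x^{k+1}-x^k\|^2\to_\K 0$; finally $\nabla f(x^k)\to_\K\nabla f(\bar x)$ by continuity.

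The decisive step is to show $g(x^{k+1})\to_\K g(\bar x)$. Since $x^{k+1}$ minimizes the objective in \eqref{Eq:NonSubki} with parameter $\gamma_k$, evaluating that objective at the admissible point $\bar x$ yields
\begin{equation*}
\langle\nabla f(x^k),x^{k+1}-x^k\rangle+\frac{\gamma_k}{2}\|x^{k+1}-x^k\|^2+g(x^{k+1})\le\langle\nabla f(x^k),\bar x-x^k\rangle+\frac{\gamma_k}{2}\|\bar x-x^k\|^2+g(\bar x).
\end{equation*}
Taking $\limsup_{k\in\K}$ and using that $\nabla f(x^k)$ is bounded along $\K$, $x^{k+1}-x^k\to 0$, $\bar x-x^k\to_\K 0$, $\gamma_k\|x^{k+1}-x^k\|^2\to_\K 0$ and $\gamma_k\|\bar x-x^k\|^2\to_\K 0$ (the last by boundedness of $\{\gamma_k\}_{k\in\K}$), every term other than $g(x^{k+1})$ on the left and $g(\bar x)$ on the right tends to $0$, so $\limsup_{k\in\K}g(x^{k+1})\le g(\bar x)$. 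Combined with lower semicontinuity of $g$ this forces $g(x^{k+1})\to_\K g(\bar x)$, whence $q(x^{k+1})=f(x^{k+1})+g(x^{k+1})\to_\K f(\bar x)+g(\bar x)=q(\bar x)$. Since $\{q(x^k)\}$ converges (as a whole sequence) to $q_*$ by \cref{Prof:Nonresults}~\ref{item:convergence q and Phi}, we conclude $q_*=q(\bar x)$, which is the second assertion.

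Finally I would pass to the limit in the subproblem's stationarity condition. As $x^{k+1}$ is a global minimizer of the objective in \eqref{Eq:NonSubki}, Fermat's rule together with the sum rule \eqref{eq:sum_rule}, applied to the sum of the smooth quadratic $x\mapsto f(x^k)+\langle\nabla f(x^k),x-x^k\rangle+\tfrac{\gamma_k}{2}\|x-x^k\|^2$ and $g$, gives
\begin{equation*}
\eta^k:=-\nabla f(x^k)-\gamma_k(x^{k+1}-x^k)\in\partial g(x^{k+1})\qquad\forall k\in\K.
\end{equation*}
By continuity of $\nabla f$ and \cref{Lem:tools}, $\eta^k\to_\K-\nabla f(\bar x)$, while $x^{k+1}\to_\K\bar x$ and $g(x^{k+1})\to_\K g(\bar x)$ from the previous step. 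The definition of the limiting subdifferential then yields $-\nabla f(\bar x)\in\partial g(\bar x)$, i.e.\ $0\in\nabla f(\bar x)+\partial g(\bar x)$, so $\bar x$ is an M-stationary point of \eqref{Eq:P}.

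The main obstacle is the middle step, the upgrade from $x^{k+1}\to_\K\bar x$ to $g(x^{k+1})\to_\K g(\bar x)$: lower semicontinuity delivers only $g(\bar x)\le\liminf_{k\in\K}g(x^{k+1})$ for free, and the matching $\limsup$ bound is precisely what has to be extracted from the minimality of the proximal step at $\bar x$, while the remainder terms $\gamma_k\|x^{k+1}-x^k\|^2$ and $\gamma_k\|\bar x-x^k\|^2$ are annihilated using \cref{Lem:tools} and \cref{Lem:stepsize is bounded somewhere}. This is also the only place where convergence of the function values, rather than merely of the iterates, is used, and it is indispensable for closing the limiting subdifferential in the last step.
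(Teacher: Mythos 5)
Your proposal is correct and follows essentially the same route as the paper's own proof: testing the minimality of $x^{k+1}$ in \eqref{Eq:NonSubki} against $\bar x$ to upgrade lower semicontinuity to $g(x^{k+1})\to_\K g(\bar x)$ (using \cref{Lem:stepsize is bounded somewhere} and \cref{Lem:tools} to control the quadratic terms), and then passing to the limit in the subproblem's optimality condition $0\in\nabla f(x^k)+\gamma_k(x^{k+1}-x^k)+\partial g(x^{k+1})$ to obtain M-stationarity and $q_*=q(\bar x)$. The extra details you spell out (boundedness of $\gamma_k$ along the tail, $\bar x\in\dom g$, closedness of the limiting subdifferential under value convergence) are exactly the ingredients the paper uses implicitly.
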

\begin{proof}
Let $\{x^k\}_\K$ be a subsequence converging to $\bar x$. Recall again that $x^{k+1}$ is a solution of the subproblem \eqref{Eq:NonSubki}, hence one has
\begin{equation*}
0\in \nabla f(x^k)+\gamma_k(x^{k+1}-x^k)+\partial g(x^{k+1}) \quad \forall k\in \mathbb N.
\end{equation*}
If we have $g(x^{k+1}) \to_\K g(\bar x)$, taking $k\to_\K \infty$ to the above optimality condition, then we have $0 \in \nabla f(\bar x)+\partial g(\bar x)$ from \cref{Lem:tools} and the continuity of $\nabla f$, which means that $\bar x$ is imediately a stationary point of \eqref{Eq:P} and $q_*=q(\bar x)$ holds from \cref{Prof:Nonresults}~\ref{item:convergence q and Phi} and the continuity of $f$. So, it remains to prove $g(x^{k+1}) \to_\K g(\bar x)$.\\
\indent From the fact that $ g $ is lower semicontinuous, then one has
\begin{equation}\label{eq:Nonpsi-lower-bound}
	g (\bar x) \leq \liminf_{k \to_\K\infty} g (x^{k+1})\leq \limsup_{k \to_\K\infty} g (x^{k+1}).
\end{equation}
Since again $x^{k+1}$ is a solution of the subproblem \eqref{Eq:NonSubki} with stepsize $\gamma_k$, hence one has
\begin{equation}\label{Eq:k+1 vs stationarity}
\langle \nabla f(x^k), x^{k+1}-x^k \rangle +\frac{\gamma_k}{2}\|x^{k+1}-x^k\|^2+g(x^{k+1})\leq \langle \nabla f(x^k), \bar x-x^k \rangle +\frac{\gamma_k}{2}\|\bar x-x^k\|^2+g(\bar x)
\end{equation}
for all $k \in \mathbb N$. Recall again that $\bar x$ is an accumulation point of $\{x^k\}_{k\in \mathbb N}$, hence $\gamma_k$ is finite for sufficiently large $k\in \K$ from \cref{Lem:stepsize is bounded somewhere}. Taking $k\to_\K \infty$ to \eqref{Eq:k+1 vs stationarity}, one has $\lim\sup_{k \to_\K \infty} g(x^{k+1}) \leq g(\bar x)$ from the fact that $x^k \to_\K \bar x$, the continuity of $\nabla f$, \cref{Prof:Nonresults}~\ref{item:convergence of sequence}, and \cref{Lem:tools}. In view of \eqref{eq:Nonpsi-lower-bound}, one has $g(x^{k+1}) \to_\K g(\bar x)$, this completes the proof.
\end{proof}
\Cref{Thm:M-sta} actually obtained the subsequential convergence, we next aim to give the convergence of the entire sequence in the presence of the KL property.
Let sufficiently small $ \eta > 0 $ be the corresponding 
constant from the definition of the associated desingularization function $\chi$.
In view of \cref{Prof:Nonresults}~\ref{item:convergence of sequence}, one can find a 
sufficiently large index $ \hat{k} \in \N $ such that
\begin{equation}\label{EqNon:eta}
   \sup_{k \geq \hat{k}} \| x^{k+1} - x^k \| \leq \eta .
\end{equation}
We denote 
\begin{equation}\label{EqNon:rho}
   \rho := \eta +\frac{1}{2},
\end{equation}
as well as the compact set
\begin{equation}\label{EqNon:C_rho}
	C_\rho := B_\rho(\bar x).
\end{equation}
Finally, 
let $L_\rho>0$ be the Lipschitz constant of $ \nabla f $ on $C_\rho$ from \eqref{EqNon:C_rho}. 
In view of \Cref{Lem:stepsize is bounded somewhere}, one has
\begin{equation}\label{EqNon:gamma-rho}
   \gamma_k \leq \bar \gamma_{\rho} 
   \quad\forall x^k \in C_\rho
\end{equation}
with some suitable upper bound $ \bar \gamma_{\rho} > 0 $ 
(depending on our choice of $ \rho $ from \eqref{EqNon:rho}).  
Since $p_{\min} >\frac{4}{5}$, then we denote
\begin{equation}\label{Eq:l}
l:=\frac{1}{2}-\sqrt{\frac{1-p_{\min}}{p_{\min}}}>0.
\end{equation}
In view of \cref{Prof:Nonresults} \ref{item: Phiq}, we know that $\Phi_k \geq q(x^k)$ for all $k\in \mathbb N$, however the quantitative relationship between $\Phi_{k+1}$ and $q(x^k)$ is uncertain, so in the following, we define
\begin{equation}\label{Eq:S}
S:=\{k\in \mathbb N \,|\, q(x^k)- \Phi_{k+1}\leq \frac{\mu}{2}\|x^{k+1}-x^k\|^2\},
\end{equation}
where $\mu \in (0, \frac{1}{2}\delta {p_{\min}\gamma_{\min}}]$ is a constant. Also define $\overline S:=\mathbb N \setminus S$.
\begin{lemma}\label{LemNon:beta-small}
Let \cref{Ass:ProxGradNonMonotone} hold and $ \{ x^k \}_{k\in \mathbb N} $
be any sequence generated by \Cref{Alg:NonMonotoneProxGrad}, and $\bar x$ be an accumulation point of $\{x^k\}_{k\in \mathbb N}$.
Suppose that $ \{ x^k \}_{k \in \K} $ is a subsequence converging to $ \bar x $,  and that $ q $ has the KL property at $\bar x$ with desingularization function $ \chi $. Then there is a sufficiently large constant $ k_0 \in \K \cap \overline S$
such that the corresponding constant 
\begin{equation}\label{EqNon:alpha}
\begin{aligned}
   \alpha := 
   \| x^{k_0} - \bar x \| 
   +\frac{1}{l}\sqrt{\frac{2}{\delta p_{\min}
   	\gamma_{\min}}}\sqrt{\Phi_{k_0}-q(\bar x)}+\frac{\bar \gamma_{\rho} + 
	  	 L_{\rho}}{2l}\chi\big(\Phi_{k_0}-q(\bar x)\big).
\end{aligned}
\end{equation}
satisfies $ \alpha < \frac{1}{2}$, where $ \rho>0$, $\bar\gamma_\rho>0$, and $l>0$ are
the constants defined in \eqref{EqNon:rho}, \eqref{EqNon:gamma-rho},  and \eqref{Eq:l}, respectively,
while $L_{\rho}>0$ is the Lipschitz constant of $\nabla f$ on $C_{\rho}$ from \eqref{EqNon:C_rho} and
$ \delta>0 $ as well as $ \gamma_{\min}>0 $ are the parameters from
\Cref{Alg:NonMonotoneProxGrad}.
\end{lemma}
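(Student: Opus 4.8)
The strategy is simple: each of the three summands in $\alpha$ (see \eqref{EqNon:alpha}) tends to $0$ as the index $k_0$ is pushed to infinity, so it suffices to (i) verify this and then (ii) guarantee that a suitably large $k_0$ can be chosen \emph{inside} $\K\cap\overline S$. Before anything else I would dispose of the degenerate case: if $\Phi_k=q(\bar x)$ for some $k$, then \eqref{Eq:decreasingPhi} forces $x^{j+1}=x^j$ for every $j\ge k$, so $\{x^k\}$ is eventually equal to $\bar x$ and the claim is trivial; hence assume $\Phi_k>q(\bar x)$ for all $k$. By \cref{Prof:Nonresults}~\ref{item:convergence q and Phi} together with \cref{Thm:M-sta} we then have $q_*=q(\bar x)$, $\Phi_k\downarrow q(\bar x)$ and $q(x^k)\to q(\bar x)$; in particular $\Phi_k-q(\bar x)\downarrow 0$ while $x^k\to_\K\bar x$.

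Part (i) is immediate: since $x^{k_0}\to_\K\bar x$, the first summand $\|x^{k_0}-\bar x\|\to 0$; since $\Phi_{k_0}-q(\bar x)\to 0$, the second summand $\tfrac1l\sqrt{\tfrac{2}{\delta p_{\min}\gamma_{\min}}}\sqrt{\Phi_{k_0}-q(\bar x)}\to 0$ (here $l>0$ by \eqref{Eq:l}, because $p_{\min}>\tfrac45$); and since $\chi$ is continuous with $\chi(0)=0$, the third summand $\tfrac{\bar\gamma_\rho+L_\rho}{2l}\chi\bigl(\Phi_{k_0}-q(\bar x)\bigr)\to 0$. Consequently, choosing $k_0\in\K\cap\overline S$ so large that, in addition, $k_0\ge\hat k$ (so \eqref{EqNon:eta} applies), $x^{k_0}\in C_\rho$ (so $\gamma_{k_0}\le\bar\gamma_\rho$ by \eqref{EqNon:gamma-rho}) and $\Phi_{k_0}-q(\bar x)<\eta$, while each of the three summands is below $\tfrac16$, yields $\alpha<\tfrac12$.

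Part (ii) — that $\K\cap\overline S$ contains arbitrarily large indices — is the real content, and I expect it to be the main obstacle. The key preliminary observation is a description of $\overline S$: inserting \cref{Prof:Nonresults}~\ref{item: Phiq} (i.e.\ $q(x^k)\le\Phi_k$) into the update $\Phi_{k+1}=(1-p_k)\Phi_k+p_kq(x^{k+1})$ of Step~\ref{Eq:defPhi} gives $q(x^k)-\Phi_{k+1}\le p_k\bigl(q(x^k)-q(x^{k+1})\bigr)$, so by \eqref{Eq:S} every $k\in\overline S$ satisfies $q(x^k)-q(x^{k+1})>\tfrac{\mu}{2p_k}\|x^{k+1}-x^k\|^2\ge\tfrac\mu2\|x^{k+1}-x^k\|^2$; thus $\overline S$ collects precisely the iterations producing a genuine decrease of the objective. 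One then argues by contradiction: if only finitely many $k\in\K$ lay in $\overline S$, then $k\in S$ for all large $k\in\K$, which by \eqref{Eq:S} together with the universal descent $\Phi_{k+1}\le\Phi_k-\mu\|x^{k+1}-x^k\|^2$ (valid since $\mu\le\tfrac12\delta p_{\min}\gamma_{\min}\le\tfrac12\delta p_k\gamma_k$, cf.\ \eqref{Eq:decreasingPhi}) forces $\tfrac\mu2\|x^{k+1}-x^k\|^2\le\Phi_k-q(x^k)$ along the tail of $\K$; combining this with the subgradient estimate $\dist\bigl(0,\partial q(x^{k+1})\bigr)\le(\bar\gamma_\rho+L_\rho)\|x^{k+1}-x^k\|$ (which follows, once $x^k,x^{k+1}\in C_\rho$, from the optimality condition for subproblem \eqref{Eq:NonSubki}, the sum rule, \eqref{EqNon:gamma-rho}, and the Lipschitz constant $L_\rho$ of $\nabla f$ on $C_\rho$) and with the KL inequality at $\bar x$ from \cref{Def:KL-property} is meant to yield a contradiction. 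Identifying the quantity that telescopes correctly at this step — the analogue of the usual $\chi$-telescoping, now only available on $\overline S$ — is the delicate point; once $\K\cap\overline S$ is known to be unbounded, the choice of $k_0$ in Part (i) is legitimate and the proof is complete.
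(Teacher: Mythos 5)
Your Part (i) is, word for word, the paper's entire proof of \cref{LemNon:beta-small}: it observes that $\|x^{k_0}-\bar x\|\to_\K 0$, that $\Phi_{k_0}-q(\bar x)\to 0$ (via \cref{Prof:Nonresults} and \cref{Thm:M-sta}), and that $\chi$ is continuous with $\chi(0)=0$, so each of the three summands in \eqref{EqNon:alpha} can be made arbitrarily small by taking $k_0\in\K$ large, whence $\alpha<\tfrac12$. So on this part you and the paper coincide exactly.

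The difference lies in your Part (ii), and you should be aware of two things. First, the paper does \emph{not} supply what you are trying to supply: its proof simply selects $k_0\in\K$ sufficiently large and never verifies that such a $k_0$ can be taken in $\overline S$; the scenario in which $S$ eventually absorbs all indices is only dealt with informally, in the remark following \cref{Thmnon:GlobConv}, where the method is said to degenerate into the monotone proximal gradient scheme and the known monotone theory is invoked. Second, your proposed route (deriving a contradiction from ``$k\in S$ for all large $k\in\K$'' via the KL inequality) cannot work, because there is no contradiction to be had: $\overline S$ can genuinely be finite. Membership $k\in S$ only says $q(x^k)\le\Phi_{k+1}+\tfrac{\mu}{2}\|x^{k+1}-x^k\|^2$, which is perfectly compatible with the KL property and with convergence — indeed in your own ``degenerate case'' (sequence eventually constant, $\Phi_k=q(\bar x)$) one gets $q(x^k)-\Phi_{k+1}=0$ for all large $k$, so \emph{every} large index lies in $S$ and $\K\cap\overline S$ is finite; hence that case is not one where ``the claim is trivial'' but precisely one where the literal statement (a sufficiently large $k_0\in\K\cap\overline S$) has no witness. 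The correct repair is therefore not a telescoping/contradiction argument but a case split: either $\K\cap\overline S$ contains arbitrarily large indices (then Part (i) finishes the proof), or all sufficiently large indices belong to $S$, in which case the subsequent convergence analysis must be (and in the paper implicitly is) run by the monotone-type argument that needs no such $k_0$. With that caveat, your attempt proves everything the paper's proof proves, and the unfinished Part (ii) is a gap in the lemma's formulation rather than something the paper's proof resolves.
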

\begin{proof}
 From $x^k \to_\K \bar x$, then there exists a sufficiently large $k_0\in \K$ satisfying $\Phi_{k_0}-q(\bar x) \geq 0$ sufficiently small deduced by \cref{Prof:Nonresults} and \cref{Thm:M-sta}. Recall that that $q$ satisfies the KL property at $\bar x$,   the corresponding disingularizatioon function is denoted as $\chi$. For such $\chi$, one has $\chi(0)=0$ and $\chi$ is continuous and increasing on its domain, therefore $\chi \big( \Phi_{k_0}-q(\bar x) \big) \geq 0$ is sufficiently small. Hence, each summand on the right-hand side \eqref{EqNon:alpha} by taking an index $k_0 \in \K$ sufficiently large yields $ \alpha < \frac{1}{2}$.
\end{proof}
Using these notations, we obtain the following result. The proof is omitted because it is similar to  \cite[Lemma~4.3]{kanzow2024convergence}.
\begin{lemma}\label{LemNon:DistanceSubgrad}
Let \cref{Ass:ProxGradNonMonotone} hold, $ \{ x^k \}_{k \in \mathbb N} $
be any sequence generated by \Cref{Alg:NonMonotoneProxGrad}, and $\bar x$ be an accumulation point of $\{x^k\}_{k\in \mathbb N}$.
Suppose that $ \{ x^k \}_{k \in \K} $ is a subsequence converging to $ \bar x $, and that $ q $ has the KL property at $\bar x$ with desingularization function $ \chi $. Then
\begin{equation*}
	\dist\big( 0, \partial q (x^{k+1}) \big) \leq 
	\big( \bar \gamma_{\rho} + L_{\rho} \big) \| x^{k+1} - 
	x^k \|
\end{equation*}
holds for all sufficiently large 
$ k \geq \hat k$ such that $ x^k \in B_{\alpha} (\bar x) $,
where $ \alpha$ denotes the constant from \eqref{EqNon:alpha},
$ \bar \gamma_{\rho}>0$ is the constant from \eqref{EqNon:gamma-rho},
and $L_{\rho}>0$ is the Lipschitz constant of $\nabla f$ on $C_\rho$
from \eqref{EqNon:C_rho}.
\end{lemma}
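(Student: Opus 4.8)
The plan is to extract, from the first-order optimality condition of the subproblem \eqref{Eq:NonSubki} defining $x^{k+1}$, an explicit element of $\partial q(x^{k+1})$, and then to bound its norm using the local Lipschitz constant $L_\rho$ and the stepsize bound $\bar\gamma_\rho$. Since $x^{k+1}$ is a minimizer of \eqref{Eq:NonSubki} with stepsize $\gamma_k$, Fermat's rule together with the sum rule \eqref{eq:sum_rule} gives
\[
	0 \in \nabla f(x^k) + \gamma_k (x^{k+1} - x^k) + \partial g(x^{k+1}),
\]
so that $-\nabla f(x^k) - \gamma_k(x^{k+1}-x^k) \in \partial g(x^{k+1})$. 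Applying the sum rule once more, now at the point $x^{k+1}$, yields
\[
	\xi^{k+1} := \nabla f(x^{k+1}) - \nabla f(x^k) - \gamma_k(x^{k+1}-x^k) \in \partial q(x^{k+1}),
\]
and hence $\dist\big(0,\partial q(x^{k+1})\big) \le \norm{\xi^{k+1}} \le \norm{\nabla f(x^{k+1}) - \nabla f(x^k)} + \gamma_k \norm{x^{k+1}-x^k}$.

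The next step is to localize both iterates inside the compact set $C_\rho$ from \eqref{EqNon:C_rho}. Fix $k \ge \hat k$ with $x^k \in B_\alpha(\bar x)$. By \eqref{EqNon:eta} we have $\norm{x^{k+1}-x^k} \le \eta$, and since $\alpha < \tfrac12$ by \Cref{LemNon:beta-small}, the triangle inequality gives $\norm{x^{k+1}-\bar x} \le \alpha + \eta < \tfrac12 + \eta = \rho$, while $x^k \in B_\alpha(\bar x) \subset B_\rho(\bar x) = C_\rho$. Thus both $x^k$ and $x^{k+1}$ lie in $C_\rho$. Consequently, \eqref{EqNon:gamma-rho} yields $\gamma_k \le \bar\gamma_\rho$, and the fact that $L_\rho$ is the Lipschitz constant of $\nabla f$ on $C_\rho$ gives $\norm{\nabla f(x^{k+1}) - \nabla f(x^k)} \le L_\rho \norm{x^{k+1}-x^k}$. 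Plugging both estimates into the bound from the previous paragraph gives
\[
	\dist\big(0,\partial q(x^{k+1})\big) \le \big(\bar\gamma_\rho + L_\rho\big)\norm{x^{k+1}-x^k},
\]
which is exactly the asserted inequality.

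I expect the only delicate point to be the bookkeeping that guarantees $x^{k+1} \in C_\rho$ (not merely $x^k$): this is where the specific enlargement $\rho = \eta + \tfrac12$ of the radius in \eqref{EqNon:rho} and the estimate $\alpha < \tfrac12$ from \Cref{LemNon:beta-small} are genuinely used, so that one fixed Lipschitz constant $L_\rho$ and one fixed stepsize bound $\bar\gamma_\rho$ apply uniformly over all the relevant indices. Beyond this, the argument is a routine double application of the limiting subdifferential sum rule for $f + g$ and the triangle inequality, and it does not use the KL property of $q$ at $\bar x$ — that hypothesis is only carried along because \Cref{LemNon:beta-small}, which supplies $\alpha$, invokes it.
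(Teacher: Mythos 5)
Your proof is correct and is essentially the argument the paper relies on: the paper omits this proof by referring to \cite[Lemma~4.3]{kanzow2024convergence}, which proceeds exactly as you do, extracting $\nabla f(x^{k+1})-\nabla f(x^k)-\gamma_k(x^{k+1}-x^k)\in\partial q(x^{k+1})$ from the subproblem optimality condition and the sum rule \eqref{eq:sum_rule}, and then bounding it via $\gamma_k\le\bar\gamma_\rho$ and the Lipschitz constant $L_\rho$ after checking $x^k,x^{k+1}\in C_\rho$ using \eqref{EqNon:eta}, $\alpha<\tfrac12$, and $\rho=\eta+\tfrac12$. Your closing observations (that the KL hypothesis enters only through the definition of $\alpha$, and that the radius bookkeeping is the only delicate point) are also accurate.
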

By employing the above results, the following theorem demonstrates that the whole sequence $\{x^k\}_{k \in \mathbb N}$ generated by \Cref{Alg:NonMonotoneProxGrad} convergences to its accumulation point $\bar x$, provided that $q$ satisfies the KL property at $\bar x$.
\begin{theorem}\label{Thmnon:GlobConv}
Let \cref{Ass:ProxGradNonMonotone} hold, $ \{ x^k \}_{k \in \mathbb N} $
be any sequence generated by \Cref{Alg:NonMonotoneProxGrad}, and $\bar x$ be an accumulation point of $\{x^k\}_{k\in \mathbb N}$.
Suppose that $ \{ x^k \}_{k \in \K} $ is a subsequence converging to $ \bar x $, and that $ q $ has the KL property at $\bar x$. Then the entire sequence $ \{ x^k \}_{k \in \mathbb N} $ converges
to $ \bar x$.
\end{theorem}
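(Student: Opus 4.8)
The plan is to run the standard Kurdyka--{\L}ojasiewicz descent argument, but carefully distinguished on the two index sets $S$ and $\overline S$ defined in \eqref{Eq:S}. First I would fix the desingularization function $\chi$, the constants $\eta$, $\rho$, $L_\rho$, $\bar\gamma_\rho$, $l$ from \eqref{EqNon:eta}--\eqref{Eq:l}, and invoke \cref{LemNon:beta-small} to pick $k_0 \in \K \cap \overline S$ large enough that the quantity $\alpha$ from \eqref{EqNon:alpha} satisfies $\alpha < \tfrac12$ and $x^{k_0} \in B_{1/2}(\bar x)$. By \cref{Thm:M-sta} we already know $q_* = q(\bar x)$, and by \cref{Prof:Nonresults}~\ref{item: Phiq} we have $\Phi_k \geq q(x^k) \geq q(\bar x)$ for all $k$. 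The goal is the usual induction: show that all iterates $x^k$ with $k \geq k_0$ stay in $B_{\alpha}(\bar x)$ (hence in $C_\rho$), so that the Lipschitz bound \eqref{EqNon:gamma-rho} and \cref{LemNon:DistanceSubgrad} apply along the whole tail, and simultaneously establish summability of $\sum \|x^{k+1}-x^k\|$, which forces $x^k \to \bar x$.

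The engine of the induction is a telescoping inequality obtained by combining the concavity of $\chi$ with the two descent estimates in \eqref{Eq:decreasingPhi}. On the ``good'' set $S$, a step $k$ satisfies $q(x^k) - \Phi_{k+1} \leq \tfrac{\mu}{2}\|x^{k+1}-x^k\|^2$, so combining with the left inequality of \eqref{Eq:decreasingPhi} one gets a genuine sufficient decrease of $\Phi_k$ controlled from below by a multiple of $\|x^{k+1}-x^k\|^2$, and the choice $\mu \leq \tfrac12\delta p_{\min}\gamma_{\min}$ makes the bookkeeping close. For $k \in S$ with $x^k \in B_\alpha(\bar x)$, applying the KL inequality at $x^{k+1}$ (legitimate since $\Phi_{k+1} > q(\bar x)$, else the tail is eventually constant and we are done by finite termination of the distance argument) together with \cref{LemNon:DistanceSubgrad} and concavity $\chi(a) - \chi(b) \geq \chi'(a)(a-b)$ yields, after taking square roots and using $2\sqrt{ab} \leq a+b$, an estimate of the form
\begin{equation*}
  \|x^{k+1}-x^k\| \leq \tfrac12\|x^k - x^{k-1}\| + c\,\big(\chi(\Phi_k - q(\bar x)) - \chi(\Phi_{k+1}-q(\bar x))\big)
\end{equation*}
for a suitable constant $c$ depending on $\bar\gamma_\rho + L_\rho$ and $l$. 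On $\overline S$, where $q(x^k) > \Phi_{k+1} + \tfrac{\mu}{2}\|x^{k+1}-x^k\|^2$ can fail to give decrease of $\Phi$ relative to $q(x^k)$, one instead uses directly the second inequality in \eqref{Eq:decreasingPhi}, namely $\Phi_{k+1} \leq \Phi_k - \delta\tfrac{p_k\gamma_k}{2}\|x^{k+1}-x^k\|^2$, to bound $\|x^{k+1}-x^k\| \leq \tfrac{1}{l}\sqrt{\tfrac{2}{\delta p_{\min}\gamma_{\min}}}\sqrt{\Phi_k - \Phi_{k+1}} \leq \tfrac{1}{l}\sqrt{\tfrac{2}{\delta p_{\min}\gamma_{\min}}}\big(\sqrt{\Phi_k - q(\bar x)} - \sqrt{\Phi_{k+1}-q(\bar x)}\big)$ by concavity of $\sqrt{\cdot}$; this is exactly the second summand of $\alpha$ in \eqref{EqNon:alpha}. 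Summing the $S$-estimates (the $\tfrac12$-contraction absorbs the previous step) and the $\overline S$-estimates over all $k \geq k_0$, the right-hand sides telescope in $\chi(\Phi_k - q(\bar x))$ and in $\sqrt{\Phi_k - q(\bar x)}$ respectively, both of which are bounded by their values at $k_0$; adding $\|x^{k_0}-\bar x\|$ reproduces precisely $\alpha$.

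The induction then reads: assuming $x^j \in B_\alpha(\bar x)$ for all $k_0 \leq j \leq k$, the partial-sum bound gives $\|x^{k+1}-\bar x\| \leq \|x^{k_0}-\bar x\| + \sum_{j=k_0}^{k}\|x^{j+1}-x^j\| \leq \alpha < \tfrac12 \leq \rho$, closing the loop; consequently $\sum_{k\geq k_0}\|x^{k+1}-x^k\| \leq \alpha < \infty$, so $\{x^k\}$ is Cauchy and converges, and since $\{x^k\}_\K \to \bar x$ the limit is $\bar x$. The main obstacle is organizing the ``mixed sum'': a run of consecutive $S$-steps produces the $\tfrac12$-geometric factor that must be summed without the bound blowing up, while isolated $\overline S$-steps interrupt the recursion, so one has to handle the telescoping for the two contributions on $S$ and $\overline S$ separately and only add them at the end — and one must verify the edge case where $\Phi_{k+1} = q(\bar x)$ for some $k$ (the KL inequality then does not apply, but \eqref{Eq:decreasingPhi} forces $x^{k+1}=x^k$ and, since $\{\Phi_k\}$ is monotone with limit $q(\bar x)$, the sequence $\|x^{k+1}-x^k\|$ is eventually zero, so the tail sum is trivially finite). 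With that case dispatched, the telescoping and the choice of $\alpha < \tfrac12$ from \cref{LemNon:beta-small} deliver the claim.
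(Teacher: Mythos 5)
Your overall scaffolding (choose $k_0\in\K\cap\overline S$ via \cref{LemNon:beta-small}, induct jointly on $x^k\in B_\alpha(\bar x)$ and on the partial-sum bound, split the estimate over $S$ and $\overline S$, dispatch the degenerate case $\Phi_k=q(\bar x)$) matches the paper, but the two case estimates that are supposed to drive the induction both fail, and in fact you have the roles of $S$ and $\overline S$ inverted. First, on $S$ you invoke the KL inequality at $x^{k+1}$, justified by ``$\Phi_{k+1}>q(\bar x)$''. That is not enough: the KL inequality requires $q(\bar x)<q(x^{k+1})<q(\bar x)+\eta$, and \cref{Prof:Nonresults}~\ref{item: Phiq} only gives $\Phi_{k+1}\geq q(x^{k+1})$, i.e.\ an upper bound on $q(x^{k+1})$; in the nonmonotone setting $q(x^{k+1})$ may drop below $q(\bar x)$ even though $\Phi_{k+1}>q(\bar x)$, so the KL inequality cannot be applied there. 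This is precisely what the partition \eqref{Eq:S} is designed to circumvent: for $k\in\overline S$ one has the certified chain $q(\bar x)<\Phi_{k+1}<q(x^k)\leq\Phi_k<q(\bar x)+\eta$, so KL is applied at $x^k$ (not $x^{k+1}$), while on $S$ the paper uses no KL at all — the defining inequality of $S$ combined with \eqref{Eq:decreasingPhi}, \eqref{Eq: Phi and q} and $\mu\leq\tfrac12\delta p_{\min}\gamma_{\min}$ yields the contraction $\Phi_k-\Phi_{k+1}\leq\tfrac{2(1-p_{\min})}{p_{\min}}(\Phi_{k-1}-\Phi_k)$, which is then absorbed thanks to $p_{\min}>\tfrac45$ (this is where $l$ from \eqref{Eq:l} enters).

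Second, your treatment of $\overline S$ rests on
\begin{equation*}
\sqrt{\Phi_k-\Phi_{k+1}}\;\leq\;\sqrt{\Phi_k-q(\bar x)}-\sqrt{\Phi_{k+1}-q(\bar x)},
\end{equation*}
attributed to concavity of $\sqrt{\cdot}$. Concavity (subadditivity) gives the \emph{reverse} inequality, $\sqrt{a}-\sqrt{b}\leq\sqrt{a-b}$ for $a\geq b\geq 0$, so this step is false. It cannot be repaired cheaply: if $\sum_k\sqrt{\Phi_k-\Phi_{k+1}}$ could be telescoped against $\sqrt{\Phi_k-q(\bar x)}$, then $\sum_k\|x^{k+1}-x^k\|<\infty$ would follow from monotone convergence of $\{\Phi_k\}$ alone, without any KL assumption — which is exactly the conclusion the KL machinery is needed for (take $\Phi_k-q(\bar x)\sim 1/k$: the telescoped sum is finite while $\sum\sqrt{\Phi_k-\Phi_{k+1}}\sim\sum 1/k$ diverges). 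So both halves of your mixed sum are unsupported: the $S$-half uses KL where it is not available, and the $\overline S$-half uses a wrong-direction inequality in place of the KL estimate that the paper actually deploys there (KL at $x^k$, \cref{LemNon:DistanceSubgrad}, the bound $q(x^k)-\Phi_{k+1}\leq\tfrac1c\Delta_{k,k+1}\sqrt{\Phi_{k-1}-\Phi_k}$, and then $\sqrt{a+b}\leq\sqrt a+\sqrt b$, $2\sqrt{ab}\leq a+b$ to produce a contraction factor $\tfrac12+\sqrt{\tfrac{1-p_{\min}}{p_{\min}}}<1$ plus a telescoping $\chi$-term).
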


\begin{proof}
By \cref{Prof:Nonresults}~\ref{item:decreasingPhi} and \ref{item:convergence q and Phi}, one knows that the whole sequence $\{\Phi_k\}_{k \in \mathbb N}$ is monotonically decreasing and converges to $q(\bar x)$ from \cref{Thm:M-sta}. It implies that $\Phi_k \geq q(\bar x) $ holds for all $k\in \mathbb N$. If $\Phi_k=q(\bar x)$ holds for some index $k\in \mathbb N$, which, by monotonicity, implies that $\Phi_{k+1}=q(\bar x)$.  Let us now recall Step \ref{Eq:defPhi} in \Cref{Alg:NonMonotoneProxGrad}, one has
\begin{equation}\label{Eq: Phi and q}
q(x^{k+1})=\frac{1}{p_k}(\Phi_{k+1}-\Phi_k)+ \Phi_k \quad \forall k\in \mathbb N.
\end{equation}
Hence, 
the acceptance criterion \eqref{Eq:NonStepCrit} implies that
\begin{equation*}
0\leq \| x^{k+1} - x^{k} \|  \leq \sqrt{\frac{2 \big( 
   \Phi_{k} - q(x^{k+1})\big)}{\delta 
   \gamma_{\min}}} = \sqrt{\frac{ 
   2(\Phi_{k}-\Phi_{k+1})}{p_{k}\delta 
   	\gamma_{\min}}} =0,
\end{equation*}
which says that $ x^{k+1} = x^k $ holds. Since
the subsequence $ \{ x^k \}_{\K} $ are assumed to converge to $ \bar x $, this
implies that $ x^k = \bar x $ for all $ k \in \N $ sufficiently
large.  Consequently,  the entire 
(eventually constant) sequence
$ \{ x^k \}_{k\in \mathbb N} $ converges to $ \bar x $ in this situation.

It remains to consider the case where $\Phi_k>q(\bar x)$ for all $k\in \mathbb N$. Let $ \alpha $ be the constant from 
\eqref{EqNon:alpha}, and $ k_0 \in \K $ be the corresponding
iteration index used in the definition of $ \alpha $,
see \cref{LemNon:beta-small}. One then has
$ 0 < \Phi_k - q(\bar x) \leq \Phi_{k_0}- q(\bar x) $
for all $k \geq k_0 $. 
 Without loss of generality, we may
also assume that $ k_0 \geq \hat{k}$ defined by \eqref{EqNon:eta} and that $ k_0 $ is 
sufficiently large to satisfy
\begin{equation}\label{Eqnon:k0-large}
	\Phi_{k_0} < q(\bar x) + \eta.
\end{equation}
Let $\chi\colon[0,\eta]\to[0,\infty)$ be the desingularization function
which validates the KL property of $q$ at $\bar x$.
Due to $ \chi (0) = 0 $ and $ \chi '(t) > 0 $ for all $t\in(0,\eta)$, one obtains
\begin{equation}\label{Eqnon:chi-get}
	\chi \big( \Phi_k - q(\bar x) \big) \geq 0 \quad \forall k \geq k_0.
\end{equation}
We now claim that the following two statements hold for all 
$ k \geq k_0 $:
\begin{enumerate}[(a)]
	\item \label{Itemnon:Ind-1}
	   $ x^k \in B_{\alpha} (\bar x) $, 
	\item \label{Itemnon:Ind-2}
	  $ \| x^{k_0} - \bar x \| + \sum_{i=k_0}^k \| x^{i+1} - x^i \| \leq \alpha $, which is equivalent to
\begin{equation}\label{Eqnon:Ind-2}
\begin{aligned}
\sum_{i=k_0}^{k} \|x^{i+1}-x^{i}\| &\leq\frac{1}{l}\sqrt{\frac{2}{\delta p_{\min}
   	\gamma_{\min}}}\sqrt{\Phi_{k_0}-q(\bar x)}+\frac{\bar \gamma_{\rho} + 
	  	 L_{\rho}}{2l}\chi\big(\Phi_{k_0}-q(\bar x)\big).
\end{aligned}
\end{equation}
\end{enumerate}
We verify these two statements jointly by induction. For $ k = k_0 $, statement \ref{Itemnon:Ind-1} holds simply by the 
definition of $ \alpha $ in \eqref{EqNon:alpha}. Meanwhile, \eqref{Eq:NonStepCrit} and \eqref{Eq: Phi and q} implies
\begin{equation}\label{Eq:induktiv}
\begin{aligned}
   \| x^{k+1} - x^{k} \| & \leq \sqrt{\frac{2 \big( 
   \Phi_{k} - q(x^{k+1})	\big)}{\delta 
   \gamma_{\min}}} = \sqrt{\frac{ 
   2(\Phi_{k}-\Phi_{k+1})}{p_{k}\delta 
   	\gamma_{\min}}} \leq \sqrt{\frac{2 \big( 
   	\Phi_k - \Phi_{k+1} \big)}{\delta p_{\min}
   	\gamma_{\min}}}
\end{aligned}
\end{equation}
holds for all $k\in \mathbb N$.
Recall the fact that $\{\Phi_k\}_{k\in \mathbb N}$ is decreasing and bounded below by $q(\bar x)$, then \eqref{Eq:induktiv} implies that
\begin{equation*}
\| x^{k+1} - x^{k} \| \leq \sqrt{\frac{2 \big( 
   	\Phi_{k_0} - q(\bar x) \big)}{\delta p_{\min}
   	\gamma_{\min}}} \quad \forall k \geq k_0,
\end{equation*}
hence \eqref{Eqnon:Ind-2} holds for 
$ k = k_0 $ due to $1/l>1$.  Suppose that both statements hold for some $ k \geq k_0 $.  By the triangle inequality, the induction hypothesis, 
and the definition of $ \alpha $, one obtains
\[
   \| x^{k+1} - \bar x \| \leq \sum_{i=k_0}^k \| x^{i+1} - x^i \| 
   + \| x^{k_0} - \bar x \| \leq \alpha ,
\]
i.e., statement \ref{Itemnon:Ind-1} holds for $ k+1 $ in place of $k$. The verification of 
the induction step for \ref{Itemnon:Ind-2} is more involved. 

By employing the index set $S$ defined in \eqref{Eq:S}, we consider the following two cases for sufficiently large $k \geq k_0$.\\
\textbf{Case 1: $k \in S$.} We know that
\begin{equation*}
q(x^k)-\Phi_{k+1} \leq \frac{\mu}{2}\|x^{k+1}-x^k\|^2 \quad \forall k\in S.
\end{equation*}
 Then \eqref{Eq:decreasingPhi}, \eqref{Eq:S}, and \eqref{Eq: Phi and q} imply
\begin{equation*}
\begin{aligned}
\Phi_k-\Phi_{k+1} &\geq \delta \frac{p_{\min}\gamma_{\min}}{2}\|x^{k+1}-x^k\|^2 \geq \frac{\delta p_{\min}\gamma_{\min}}{\mu} \big(q(x^k)-\Phi_{k+1}\big) \geq 2\big(q(x^k)-\Phi_{k+1}\big)\\
& =2\left(\frac{1}{p_{k-1}}\big(\Phi_k-\Phi_{k-1}\big)+\Phi_{k-1}-\Phi_{k+1} \right)\\
&=2\left(\frac{1}{p_{k-1}}-1\right)\big(\Phi_k-\Phi_{k-1}\big)+2(\Phi_k-\Phi_{k+1}) \quad \forall  S \ni k \geq k_0+1,
\end{aligned}
\end{equation*}
which yields 
\begin{equation*}
\Phi_k-\Phi_{k+1} \leq 2\left(\frac{1-p_{k-1}}{p_{k-1}}\right)\big(\Phi_{k-1}-\Phi_{k}\big) \leq 2\left(\frac{1-p_{\min}}{p_{\min}}\right)\big(\Phi_{k-1}-\Phi_{k}\big)  \quad \forall  S \ni k \geq k_0+1.
\end{equation*}
Due to $k_0\in \overline S$, summation yields that
\begin{equation}\label{Eq:s}
\begin{aligned}
\sum_{{S\ni i=k_0}}^{k+1}\sqrt{\Phi_i-\Phi_{i+1}}&=\sum_{{S\ni i=k_0+1}}^{k+1}\sqrt{\Phi_i-\Phi_{i+1}}\\
&\leq \sqrt{\frac{2(1-p_{\min})}{p_{\min}}}\sum_{{S\ni i=k_0+1}}^{k+1}\sqrt{\Phi_{i-1}-\Phi_{i}}.
\end{aligned}
\end{equation}
\textbf{Case 2: $k\in \overline S$.} For all $i$ satisfying $i\geq k_0$ and $i\in \overline S$, then one has $q(x^i) > \Phi_{i+1}$, and hence $q(x^i) \geq q(x^{i+1})$ holds. 
From \cref{Prof:Nonresults} \ref{item: Phiq} and \eqref{Eqnon:k0-large}, one has that
\begin{equation}\label{Eq:relations when k notin S}
q(\bar x) < \Phi_{i+1}<q(x^i)\leq \Phi_i \leq \Phi_{k_0} < q(\bar x)+\eta \quad 
	 \forall i\geq k_0 \ \text{and} \ i\in \overline S.
\end{equation}
Recall that $ q$ has the KL property at $\bar x$, one has
\begin{equation}\label{Eqnon:3-15}
	\chi ' \big( q (x^i) - q (\bar x) \big) \dist
	\big( 0, \partial q (x^i) \big) \geq 1 \quad 
	 \forall i\geq k_0 \ \text{and} \ i\in \overline S.
\end{equation}
Since $x^i\in B_{\alpha}(\bar x)$ holds for all $i\in \{k_0, k_0+1, \ldots, k\}$ by our induction hypothesis, we can apply \cref{LemNon:DistanceSubgrad} and obtain 
\begin{equation*}
\dist
	\big( 0, \partial q (x^{i+1}) \big) \leq (\bar \gamma_{\rho} + 
	  	 L_{\rho})\|x^{i+1}-x^{i}\| \quad \forall i \in k_0, \ldots, k,
\end{equation*}
hence, one has 
\begin{equation}\label{Eq:dist}
\dist
	\big( 0, \partial q (x^i) \big) \leq (\bar \gamma_{\rho} + 
	  	 L_{\rho})\|x^{i}-x^{i-1}\| \quad \forall i \in \{k_0+1, \ldots, k+1\} \cap \overline S.
\end{equation}
Recall that $\chi$ is increasing and concave on $[0,\eta)$,  $\chi'(t)>0$ for all $t>0$, and $\chi(0)=0$, then one has
\begin{equation}\label{Eq:chi}
\begin{aligned}
\chi  \big( \Phi_{k} - q (\bar x) \big)-\chi  \big( \Phi_{k+1} - q (\bar x) \big)&\geq \chi  \big( q(x^{k})- q (\bar x) \big)-\chi  \big( \Phi_{k+1} - q (\bar x) \big) \\
& \geq \chi' \big( q(x^{k}) - q (\bar x) \big) \big( q(x^{k}) - \Phi_{k+1}\big)\\
&\geq \frac{ q(x^{k}) - \Phi_{k+1}}{(\bar \gamma_{\rho} + 
	  	 L_{\rho})\|x^{k}-x^{k-1}\|}\\
& \geq \sqrt{\frac{2}{\delta p_{\min}\gamma_{\min}}}\frac{ q(x^{k}) - \Phi_{k+1}}{(\bar \gamma_{\rho} + 
	  	 L_{\rho})\sqrt{ \Phi_{k-1}-\Phi_{k}}}\\
&:=c\frac{ q(x^{k}) - \Phi_{k+1}}{ \sqrt{\Phi_{k-1}-\Phi_{k}}}
\end{aligned}
\end{equation}
for all $\{k_0+1, \ldots, k+1\} \cap \overline S$, where $c:=\sqrt{\frac{2}{\delta p_{\min}\gamma_{\min}}}\frac{1}{(\bar \gamma_{\rho} + 
	  	 L_{\rho})}$. Meanwhile, \eqref{Eq:NonStepCrit} means that $q(x^{k+1}) \leq \Phi_k$,  it follows that 
\begin{equation}\label{Eq:betterp}
\Phi_{k+1}:=(1-p_k)\Phi_k+p_kq(x^{k+1})\leq (1-p_{\min})\Phi_k+p_{\min}q(x^{k+1}) \quad \forall k\in \mathbb N.
\end{equation}
Denote $\Delta_{i,j}=\chi \big( \Phi_{i} - q (\bar x) \big)-\chi  \big( \Phi_{j} - q (\bar x)\big)$ for convenience,  \eqref{Eq:chi} and \eqref{Eq:betterp} yield that
\begin{equation*}
\begin{aligned}
\Phi_k-\Phi_{k+1} &\leq (1-p_{\min})\Phi_{k-1}+p_{\min}q(x^k)-\Phi_{k+1}\\
& \leq (1-p_{\min})\big(\Phi_{k-1}-\Phi_{k+1}\big)+p_{\min} \big(q(x^k)-\Phi_{k+1}\big)\\
& \leq (1-p_{\min})\big(\Phi_{k-1}-\Phi_{k+1}\big)+\frac{p_{\min}}{c}\Delta_{k,k+1}\sqrt{\Phi_{k-1}-\Phi_{k}}\\
& \leq (1-p_{\min})\left(\big(\Phi_{k-1}-\Phi_{k}\big)+\big(\Phi_{k}-\Phi_{k+1}\big)\right)+\frac{p_{\min}}{c}\Delta_{k,k+1}\sqrt{\Phi_{k-1}-\Phi_{k}},
\end{aligned}
\end{equation*}
which implies that
\begin{equation*}
p_{\min} \big( \Phi_k-\Phi_{k+1}\big)\leq (1-p_{\min})\big(\Phi_{k-1}-\Phi_{k}\big)+\frac{p_{\min}}{c}\Delta_{k,k+1}\sqrt{\Phi_{k-1}-\Phi_{k}}.
\end{equation*}
Using $\sqrt{a+b} \leq \sqrt{a}+\sqrt{b}$ and $2\sqrt{ab} \leq a+b$ for all $a,b\geq 0$, we obtain
\begin{equation*}
\begin{aligned}
\sqrt{p_{\min}}\sqrt{\Phi_k-\Phi_{k+1}} &\leq \sqrt{1-p_{\min}}\sqrt{\Phi_{k-1}-\Phi_{k}}+\sqrt{p_{\min}}\sqrt{\frac{1}{c}\Delta_{k,k+1}\sqrt{\Phi_{k-1}-\Phi_{k}}}\\
& \leq \sqrt{1-p_{\min}}\sqrt{\Phi_{k-1}-\Phi_{k}}+\frac{\sqrt{p_{\min}}}{2} \left(\frac{1}{c}\Delta_{k,k+1}+\sqrt{\Phi_{k-1}-\Phi_{k}}\right),
\end{aligned}
\end{equation*}
which yields that
\begin{equation*}
\sqrt{p_{\min}}\sqrt{\Phi_k-\Phi_{k+1}} \leq \left(\frac{\sqrt{p_{\min}}}{2}+\sqrt{1-p_{\min}}\right)\sqrt{\Phi_{k-1}-\Phi_{k}}+\frac{\sqrt{p_{\min}}}{2c}\Delta_{k,k+1}
\end{equation*}
for all $\{k_0+1, \ldots, k+1\} \cap \overline S$.  Summation yields that
\begin{equation*}
\begin{aligned}
\sum_{\overline S\ni i=k_0+1}^{k+1} \sqrt{\Phi_i-\Phi_{i+1}} & \leq \sum_{\overline S\ni i=k_0+1}^{k+1} \left(\frac{1}{2}+\sqrt{\frac{1-p_{\min}}{p_{\min}}}\right) \sqrt{\Phi_{i-1}-\Phi_{i}}+\frac{1}{2c}\Delta_{i,i+1}\\
& \leq \sum_{\overline S\ni i=k_0+1}^{k+1} \left(\frac{1}{2}+\sqrt{\frac{1-p_{\min}}{p_{\min}}}\right) \sqrt{\Phi_{i-1}-\Phi_{i}}+ \sum_{ i=k_0+1}^{k+1}\frac{1}{2c}\Delta_{i,i+1}\\
& \leq \sum_{\overline S\ni i=k_0+1}^{k+1} \left(\frac{1}{2}+\sqrt{\frac{1-p_{\min}}{p_{\min}}}\right) \sqrt{\Phi_{i-1}-\Phi_{i}}+\frac{1}{2c}\chi\big(\Phi_{k_0}-q(\bar x)\big).
\end{aligned}
\end{equation*}
Due to $k_0 \in \overline S$,  we then obtain
\begin{equation}\label{Eq:overlineS}
\begin{aligned}
\sum_{\overline S\ni i=k_0}^{k+1} \sqrt{\Phi_i-\Phi_{i+1}}& \leq \sqrt{\Phi_{k_0}-\Phi_{k_0+1}}+\sum_{\overline S\ni i=k_0+1}^{k+1} \sqrt{\Phi_i-\Phi_{i+1}}\\
&\leq \sqrt{\Phi_{k_0}-q(\bar x)}+\frac{1}{2c}\chi\big(\Phi_{k_0}-q(\bar x)\big)\\
&~~~~+\sum_{\overline S\ni i=k_0+1}^{k+1} \left(\frac{1}{2}+\sqrt{\frac{1-p_{\min}}{p_{\min}}}\right) \sqrt{\Phi_{i-1}-\Phi_{i}}.
\end{aligned}
\end{equation}
Since $p_{\min} >\frac{4}{5}$, then $\sqrt{\frac{2(1-p_{\min})}{p_{\min}}}<\frac{1}{2}+\sqrt{\frac{1-p_{\min}}{p_{\min}}}$ holds.  Adding \eqref{Eq:s} and \eqref{Eq:overlineS} yields that
\begin{equation*}
\begin{aligned}
\sum_{i=k_0}^{k+1} \sqrt{\Phi_i-\Phi_{i+1}} &\leq\sqrt{\Phi_{k_0}-q(\bar x)}+\frac{1}{2c}\chi\big(\Phi_{k_0}-q(\bar x)\big)\\
&~~~~+\left(\frac{1}{2}+\sqrt{\frac{1-p_{\min}}{p_{\min}}}\right) \sum_{ i=k_0+1}^{k+1}\sqrt{\Phi_{i-1}-\Phi_{i}}.
\end{aligned}
\end{equation*}
Recall again \eqref{Eq:l}, we obtain
\begin{equation*}
\sum_{i=k_0}^{k+1} \sqrt{\Phi_i-\Phi_{i+1}} \leq \frac{1}{l}\sqrt{\Phi_{k_0}-q(\bar x)}+\frac{1}{2cl}\chi\big(\Phi_{k_0}-q(\bar x)\big).
\end{equation*}
Then, together with \eqref{Eq:induktiv}, we have
\begin{equation*}
\begin{aligned}
   \sum_{i=k_0}^{k+1}\| x^{i+1} - x^{i} \| & \leq    \sum_{i=k_0}^{k+1} \sqrt{\frac{2 \big( 
   	\Phi_i - \Phi_{i+1} \big)}{\delta p_{\min}
   	\gamma_{\min}}}\\
&\leq \frac{1}{l}\sqrt{\frac{2}{\delta p_{\min}
   	\gamma_{\min}}}\sqrt{\Phi_{k_0}-q(\bar x)}+\frac{\bar \gamma_{\rho} + 
	  	 L_{\rho}}{2l}\chi\big(\Phi_{k_0}-q(\bar x)\big).
\end{aligned}
\end{equation*}
Hence, statement \ref{Itemnon:Ind-2} holds for $k+1$ in place of $k$, and this completes the induction.\\
\indent It follows from \ref{Itemnon:Ind-1} that 
$ x^k \in B_{\alpha} (\bar x) $ for all $ k \geq k_0 $.  Taking
$ k \to \infty $ in \eqref{Eqnon:Ind-2} shows that 
$ \{ x^k \}_{k \in \mathbb N} $ is a Cauchy sequence and hence convergent.
Since $ \bar x $ is an accumulation point,
then the entire sequence $ \{ x^k \}_{k \in \mathbb N} $ converges
to $ \bar x$.
\end{proof}
\indent Note that if all indices $k \geq k_0$ belong to $S$ or if $p_k:=1$, then \Cref{Alg:NonMonotoneProxGrad} degenerates into a monotone proximal gradient method, see \cite[Algorithm~3.1]{KanzowMehlitz2022} or \cite[Algorithm~3.1]{jia2023convergence}, and the corresponding rate-of-convergence result was obtained in \cite{jia2023augmented}. So, we in the following state the rate-of-convergence result of the NPG method (in the general case).  Readers may find more details for the proof in \cite[Theorem~4.6]{kanzow2024convergence}.

\begin{theorem}\label{Thm:Rate-of-Conv}
Let \cref{Ass:ProxGradNonMonotone} hold and $ \{ x^k \}_{k\in \mathbb N} $
be any sequence generated by \Cref{Alg:NonMonotoneProxGrad}.
Suppose that $ \{ x^k \}_{k \in \K} $ is a subsequence converging to
some limit point $ \bar x $, and that $ q $ has the KL property
at $ \bar x $. Then the entire sequence $ \{ x^k \}_{k \in \mathbb N}$ converges
to $ \bar x $, and if the corresponding desingularization function
has the form $ \chi (t) = \kappa t^{\theta} $ for some $ kappa > 0 $ and $\theta \in (0,1]$, then
the following statements hold:
\begin{enumerate}[(i)]
          \item  if $\theta=1$, then the sequences $\{\Phi_k\}_{k\in \mathbb N}
$ and $\{x^k\}_{k\in \mathbb N}$ converge in a finite number of steps to $q(\bar x)$ and $\bar x$, respectively.
         \item  if $\theta\in [\frac{1}{2}, 1)$, then $\{\Phi_k\}$ Q-linearly convergent to $q(\bar x)$, and $\{x^k\}$ R-linearly convergent to $\bar x$.
          \item  if $\theta\in (0,\frac{1}{2})$, then there exist some
           positive constants $\eta_1$ and $\eta_2$ such that
           \begin{align*}
           \Phi_k-q(\bar x) &\leq \eta_1 k^{-\frac{1}{1-2\theta}},\\
          \|x^k-\bar x\| &\leq \eta_2 { k}^{-\frac{\theta}{1-2\theta}}
             \end{align*}
           for sufficiently large $k\geq k_0$.
\end{enumerate}
\end{theorem}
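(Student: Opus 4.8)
Since \Cref{Thmnon:GlobConv} already yields $x^{k}\to\bar x$ and, through \Cref{Prof:Nonresults} and \Cref{Thm:M-sta}, $\Phi_{k}\downarrow q(\bar x)$, the plan is to quantify the speed of $\Delta_{k}:=\Phi_{k}-q(\bar x)\ge 0$ and then transfer it to the iterates. I would first dispose of two degenerate cases. If $\Phi_{k}=q(\bar x)$ for some $k$, monotonicity gives $\Phi_{j}=q(\bar x)$ for all $j\ge k$, and \eqref{Eq:NonStepCrit} together with \eqref{Eq: Phi and q} then forces $x^{j+1}=x^{j}$, i.e.\ finite termination; and if every sufficiently large index lies in the set $S$ from \eqref{Eq:S} (in particular if $p_{k}\equiv 1$), the scheme coincides with the monotone proximal gradient method, whose rates are supplied by \cite{jia2023augmented}. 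From now on I assume $\Delta_{k}>0$ for all $k$ and that $\overline S$ is infinite. By hypothesis the desingularization function has the form $\chi(t)=\kappa t^{\theta}$, so $\chi'(t)=\kappa\theta t^{\theta-1}$.

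The heart of the proof is a {\L}ojasiewicz-type recursion for $\{\Delta_{k}\}$. For a large index $k\in\overline S$ one has, exactly as in the proof of \Cref{Thmnon:GlobConv}, $x^{k}\in B_{\alpha}(\bar x)$ and $q(\bar x)<\Phi_{k+1}<q(x^{k})\le\Phi_{k}<q(\bar x)+\eta$, so the KL inequality applies at $x^{k}$. Combining it with the shifted subgradient bound $\dist(0,\partial q(x^{k}))\le(\bar\gamma_{\rho}+L_{\rho})\|x^{k}-x^{k-1}\|$ from \Cref{LemNon:DistanceSubgrad}, with the step estimate $\|x^{k}-x^{k-1}\|^{2}\le 2(\Delta_{k-1}-\Delta_{k})/(\delta p_{\min}\gamma_{\min})$ from \eqref{Eq:induktiv}, and with $q(x^{k})-q(\bar x)>\Delta_{k+1}$, I would obtain
\[
   \Delta_{k+1}^{2(1-\theta)}\ \le\ c_{1}\,\big(\Delta_{k-1}-\Delta_{k}\big),\qquad c_{1}:=\frac{2\big(\kappa\theta(\bar\gamma_{\rho}+L_{\rho})\big)^{2}}{\delta p_{\min}\gamma_{\min}},
\]
for all large $k\in\overline S$. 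For $k\in S$, the computation of Case~1 in the proof of \Cref{Thmnon:GlobConv} already gives the decrement contraction $\Delta_{k}-\Delta_{k+1}\le\tfrac{2(1-p_{\min})}{p_{\min}}(\Delta_{k-1}-\Delta_{k})\le\tfrac12(\Delta_{k-1}-\Delta_{k})$. Chaining these two estimates over the alternating runs of $\overline S$- and $S$-indices, I would upgrade the display to a recursion of the classical form $\Delta_{k}-\Delta_{k+1}\ge c_{2}\,\Delta_{k+1}^{2(1-\theta)}$ valid for all large $k$, with a fixed $c_{2}>0$. This upgrading — keeping the {\L}ojasiewicz exponent intact while passing through $S$-runs, where the KL inequality is not directly available because $q(x^{k})$ need not exceed $q(\bar x)$ — is the step I expect to be the main obstacle; as an alternative one may imitate \cite[Theorem~4.6]{kanzow2024convergence} line by line.

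Granting $\Delta_{k}-\Delta_{k+1}\ge c_{2}\Delta_{k+1}^{2(1-\theta)}$, the three regimes follow from standard real-sequence lemmas. If $\theta=1$ this reads $\Delta_{k}-\Delta_{k+1}\ge c_{2}>0$, incompatible with $\Delta_{k}\downarrow 0$ unless $\Delta_{k}$ vanishes after finitely many steps; with \eqref{Eq:NonStepCrit} this gives $x^{k}=\bar x$ eventually, proving (i). If $\theta\in[\tfrac12,1)$, then $2(1-\theta)\le 1$, so $\Delta_{k+1}^{2(1-\theta)}\ge\Delta_{k+1}$ once $\Delta_{k+1}\le 1$, whence $\Delta_{k+1}\le\tfrac{1}{1+c_{2}}\Delta_{k}$ and $\{\Phi_{k}\}$ converges Q-linearly to $q(\bar x)$. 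If $\theta\in(0,\tfrac12)$, then $2(1-\theta)>1$ and the standard estimate for nonnegative sequences with $s_{k}-s_{k+1}\ge c\,s_{k+1}^{2(1-\theta)}$ (see, e.g., \cite{AttouchBolteRedontSoubeyran2010}) yields $\Delta_{k}\le\eta_{1}k^{-1/(2(1-\theta)-1)}=\eta_{1}k^{-1/(1-2\theta)}$ for all large $k$.

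It remains to pass from $\Delta_{k}$ to $\|x^{k}-\bar x\|$. Re-running the induction of \Cref{Thmnon:GlobConv} from an arbitrary large index $j\in\overline S$ in place of $k_{0}$ delivers the tail bound
\[
   \|x^{j}-\bar x\|\ \le\ \sum_{i=j}^{\infty}\|x^{i+1}-x^{i}\|\ \le\ \frac{1}{l}\sqrt{\frac{2}{\delta p_{\min}\gamma_{\min}}}\,\sqrt{\Delta_{j}}\;+\;\frac{\bar\gamma_{\rho}+L_{\rho}}{2l}\,\kappa\,\Delta_{j}^{\theta},
\]
so its right-hand side is of order $\Delta_{j}^{\min\{1/2,\theta\}}$ as $\Delta_{j}\to 0$. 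In case (ii) this order is $\Delta_{j}^{1/2}$, which is dominated by a Q-linearly convergent null sequence, hence $\{x^{k}\}$ converges R-linearly; in case (i) the right-hand side is $0$ beyond the termination index; and in case (iii) the order is $\Delta_{j}^{\theta}$, so inserting $\Delta_{j}\le\eta_{1}j^{-1/(1-2\theta)}$ produces $\|x^{j}-\bar x\|\le\eta_{2}j^{-\theta/(1-2\theta)}$ for all large $j$. This completes the plan.
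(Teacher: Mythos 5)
Your overall strategy (a KL-based recursion for $\Delta_k:=\Phi_k-q(\bar x)$, standard real-sequence lemmas for the three regimes, then transfer to $\|x^k-\bar x\|$ through the finite-length tail estimate) is the same architecture the paper uses for the max-line-search analogue (\cref{Thm:maxRate-of-Conv}) and that it delegates, for the present theorem, to \cite[Theorem~4.6]{kanzow2024convergence} without printing a proof. However, your execution has a genuine gap exactly at the step you yourself flag as the main obstacle, and flagging it does not close it: the claimed upgrade to a recursion $\Delta_k-\Delta_{k+1}\ge c_2\,\Delta_{k+1}^{2(1-\theta)}$ \emph{valid for all large $k$} does not follow from your two ingredients, and there is no reason it should hold on $S$. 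For $k\in S$ the only estimate available is an \emph{upper} bound on the decrement, $\Phi_k-\Phi_{k+1}\le\tfrac{2(1-p_{\min})}{p_{\min}}(\Phi_{k-1}-\Phi_k)$, which works against the lower bound you need; the KL inequality cannot be invoked at such indices because one only knows $q(x^k)\le\Phi_{k+1}+\tfrac{\mu}{2}\|x^{k+1}-x^k\|^2$ there and $q(x^k)$ need not exceed $q(\bar x)$. Along a run of consecutive $S$-indices the decrements contract (at rate at most $1/2$ per step) while $\Delta_{k+1}$ decreases only by their small sum, so $\Delta_k-\Delta_{k+1}$ can become arbitrarily small relative to $\Delta_{k+1}^{2(1-\theta)}$; nothing in \eqref{Eq:S}, \eqref{Eq:induktiv} or \cref{LemNon:DistanceSubgrad} bridges such runs. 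Moreover, even on $\overline S$ your display bounds $\Delta_{k+1}^{2(1-\theta)}$ by the \emph{previous} decrement $\Delta_{k-1}-\Delta_k$, so an index shift must also be handled. What is actually required is an aggregated or shifted recursion that carries the KL information across the $S$-runs (this is precisely what the different partitioning and the quantitative comparison lemma of \cite{kanzow2024convergence} provide), and "imitate \cite[Theorem~4.6]{kanzow2024convergence} line by line" is not available verbatim here because that proof is built on a different index decomposition than \eqref{Eq:S}. Since all three regimes (i)--(iii) for $\{\Phi_k\}$ hinge on this recursion, the functional rates are not established by your argument.

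A secondary issue concerns the transfer to the iterates: the tail bound $\|x^j-\bar x\|\le \tfrac1l\sqrt{2/(\delta p_{\min}\gamma_{\min})}\sqrt{\Delta_j}+\tfrac{\bar\gamma_\rho+L_\rho}{2l}\kappa\Delta_j^{\theta}$ is obtained by restarting the induction of \cref{Thmnon:GlobConv} at $j$, which requires $j\in\overline S$ (the induction uses $k_0\in\overline S$ both to start the $S$-summation at $k_0+1$ and to absorb the term $\sqrt{\Phi_{k_0}-\Phi_{k_0+1}}$). For $j\in S$ you must fall back to the last preceding $\overline S$-index $j'$, and without control on $j-j'$ the bounds $\|x^j-\bar x\|\le\eta_2 j^{-\theta/(1-2\theta)}$ in (iii) and the R-linear domination in (ii) are not obtained for \emph{all} large $j$. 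This part is likely repairable (e.g., by a more careful restart of the induction at arbitrary large indices), but as written it leaves the iterate rates incomplete as well.
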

\section{The NPG method with Max Line Search and Convergence Analysis}\label{Sec:max}
This section focuses on another type of the nonmonotone proximal gradient method, where the line search is chosen as the max-type rule.  Let us now introduce the algorithm which is derived from \cite[Algorithm~4.1]{KanzowMehlitz2022}.
\begin{algorithm}\caption{A Nonmonotone Proximal Gradient Method with Max Line Search }
	\label{Alg:maxNonMonotoneProxGrad}
	\begin{algorithmic}[1]
		\REQUIRE $\tau > 1$, $0 < \gamma_{\min} \leq  \gamma_{\max} < \infty$, $m \in \mathbb N$,
			$\delta \in (0,1)$, $x^0 \in \dom q$.
		\STATE Set $k := 0$.
		\WHILE{A suitable termination criterion is violated at iteration $ k $}
		\STATE Set $m_k:=\min\{k, m\}$ and choose $ \gamma_k^0 \in [ \gamma_{\min}, \gamma_{\max}] $.
		\STATE\label{step:subproblem_solve_maxMonotoneProxGrad} 
			For $ i = 0, 1, 2, \ldots $, compute a solution $ x^{k,i} $ of
      		\begin{equation}\label{Eq:maxNonSubki}
         		\min_x \ f (x^k) + \langle\nabla f(x^k), x - x^k \rangle + \frac{\gamma_{k,i}}{2} \| x - x^k \|^2 + g (x), 
         		\quad x \in \mathbb X
      		\end{equation}
      		with $ \gamma_{k,i} := \tau^i \gamma_k^0 $, until the acceptance criterion
      		\begin{equation}\label{Eq:maxNonStepCrit}
         		q(x^{k,i}) \leq 
         		\max_{j=0,1,\ldots, m_k}  q(x^{k-j})- \delta \frac{\gamma_{k,i}}{2} \| x^{k,i} - x^k \|^2 
      		\end{equation}
      		holds.
		\STATE \label{item':remark} Denote by $ i_k := i $ the terminal value, and set $ \gamma_k := 
      			\gamma_{k,i_k} $ and $ x^{k+1} := x^{k,i_k} $.
      	\STATE Set $ k \leftarrow k + 1 $.
		\ENDWHILE
		\RETURN $x^k$
	\end{algorithmic}
\end{algorithm}

Throughout this section, let us define $l(k)\in \{k-m_k, \ldots, k \}$ as an index satisfying
\begin{equation*}
q(x^{l(k)})= \max_{j=0,1, \ldots, m_k} q(x^{k-j})
\end{equation*}
for each $k\in \mathbb N$.  In order to ensure the convergence of  \cref{Alg:maxNonMonotoneProxGrad},  we need both \Cref{Ass:ProxGradNonMonotone} and 
\begin{assumption} \label{Ass:Cont}
The function $q$ is continuous on $\dom q$.
\end{assumption}
Note that \Cref{Ass:ProxGradNonMonotone}\ \ref{itemnon:phi_bounded_affine} and \Cref{Ass:Cont} imply that $q$ is uniformly continuous on the corresponding sublevel $\mathcal L_q(x^0)$. The requirement of uniform continuity about the objective function plays an essential role in the context of nonmonotone line search rules with the maximum taste \cite{grippo1986nonmonotone}.  We here recall some results for the convenience of readers. 
\begin{proposition}\label{Pro:maxprevious}
Let \cref{Ass:ProxGradNonMonotone} and \cref{Ass:Cont} hold and let $\{x^k\}_{k\in \mathbb N}$ be any sequence generated by \Cref{Alg:maxNonMonotoneProxGrad},  then
\begin{enumerate}[(a)]
\item \label{item:maxdecreasingPhi} the sequence $\{q(x^{l(k)})\}_{k\in \mathbb N}$ is monotonically decreasing,
\item\label{Item:maxlevel set of q} the sequences $\{x^k\}_{k\in \mathbb N}$, $\{x^{l(k)}\}_{k\in \mathbb N}$ $\subset \mathcal L_q(x^0) \subset \dom g$,
\item \label{item:maxconvergence of sequence} $\lim_{k \to \infty} \|x^{k+1}-x^k \| =0$,
\item \label{item:maxconvergence of gamma yimes sequence} suppose $\bar x$ is an accumulation point of $\{x^k\}_{k\in \mathbb N}$ such that $x^k \to_\K \bar x$ holds. Then $\bar x$ is an M-stationary point of \eqref{Eq:P}, and $\gamma_k\|x^{k+1}-x^k\| \to_\K 0$ is valid.
\end{enumerate}
\end{proposition}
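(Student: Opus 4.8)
The four claims follow from the Grippo--Lampariello--Lucidi-type analysis of \Cref{Alg:maxNonMonotoneProxGrad} carried out in \cite{KanzowMehlitz2022}, so the plan is to sketch that argument and point out where \Cref{Ass:Cont} is used. For part (a) I would start from the acceptance test \eqref{Eq:maxNonStepCrit}, which yields $q(x^{k+1})\le q(x^{l(k)})-\delta\frac{\gamma_k}{2}\|x^{k+1}-x^k\|^2\le q(x^{l(k)})$, and then exploit $m_{k+1}\le m_k+1$ to observe that the candidates defining $q(x^{l(k+1)})$ all lie in $\{x^{k+1}\}\cup\{x^{k-j}:0\le j\le m_k\}$; taking the maximum and using $q(x^{k+1})\le q(x^{l(k)})$ gives $q(x^{l(k+1)})\le q(x^{l(k)})$. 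Part (b) is then immediate, since $x^k$ is itself one of those candidates (the choice $j=0$), so $q(x^k)\le q(x^{l(k)})\le q(x^{l(0)})=q(x^0)$; the inclusion $\mathcal L_q(x^0)\subset\dom g$ follows from \cref{Ass:ProxGradNonMonotone}~\ref{itemnon:phi_bounded_affine} exactly as in \cref{Prof:Nonresults}~\ref{Item:level set of q}.

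Part (c) is the technical core and, I expect, the main obstacle. Since $\{q(x^{l(k)})\}$ is nonincreasing by (a) and bounded below by \cref{Ass:ProxGradNonMonotone}~\ref{itemnon:psi_bounded}, it converges to some $\hat q$. Writing \eqref{Eq:maxNonStepCrit} at iteration $l(k)-1$, i.e. $q(x^{l(k)})\le q(x^{l(l(k)-1)})-\delta\frac{\gamma_{l(k)-1}}{2}\|x^{l(k)}-x^{l(k)-1}\|^2$, and using $\gamma_{l(k)-1}\ge\gamma_{\min}>0$ together with $q(x^{l(l(k)-1)})\to\hat q$ along the index $l(k)-1\to\infty$, I obtain $\|x^{l(k)}-x^{l(k)-1}\|\to 0$. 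An induction on $j\ge 0$ then propagates this: from $\|x^{l(k)}-x^{l(k)-j}\|\to 0$ and the \emph{uniform} continuity of $q$ on $\mathcal L_q(x^0)$ — which is precisely where \Cref{Ass:Cont} (combined with \cref{Ass:ProxGradNonMonotone}~\ref{itemnon:phi_bounded_affine}) enters — one gets $q(x^{l(k)-j})\to\hat q$, and then \eqref{Eq:maxNonStepCrit} at iteration $l(k)-j-1$ gives $\|x^{l(k)-j}-x^{l(k)-j-1}\|\to 0$. Finally, for an arbitrary sufficiently large $k$ one has $k+1\le l(k+m+1)\le k+m+1$, hence $x^{k+1}=x^{l(k+m+1)-r}$ and $x^k=x^{l(k+m+1)-r-1}$ for some $r\in\{0,\dots,m\}$, so $\|x^{k+1}-x^k\|\to 0$ follows from the induction applied with $j=r$.

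For part (d) the plan is to transcribe the arguments of \Cref{Lem:tools} and \Cref{Thm:M-sta}, replacing the merit value $\Phi_k$ by $q(x^{l(k)})$ and using $q(x^k)\le q(x^{l(k)})$ in place of \cref{Prof:Nonresults}~\ref{item: Phiq}. If $\{\gamma_k\}_{\K}$ is bounded, $\gamma_k\|x^{k+1}-x^k\|\to_\K 0$ is immediate from (c); otherwise I pass to the trial stepsize $\hat\gamma_k:=\gamma_k/\tau\to_\K\infty$ and trial point $\hat x^k:=x^{k,i_k-1}$, whose rejection in \eqref{Eq:maxNonStepCrit} gives $q(\hat x^k)>q(x^k)-\delta\frac{\hat\gamma_k}{2}\|\hat x^k-x^k\|^2$; subproblem optimality \eqref{Eq:maxNonSubki} then forces $\|\hat x^k-x^k\|\to_\K 0$ and $\hat x^k\to_\K\bar x$, the mean-value theorem yields $(1-\delta)\frac{\hat\gamma_k}{2}\|\hat x^k-x^k\|<\|\nabla f(x^k)-\nabla f(\xi^k)\|\to_\K 0$, and comparing the two subproblems gives $\gamma_k\|x^{k+1}-x^k\|\le\tau\hat\gamma_k\|\hat x^k-x^k\|\to_\K 0$. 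For M-stationarity, from $0\in\nabla f(x^k)+\gamma_k(x^{k+1}-x^k)+\partial g(x^{k+1})$ I let $k\to_\K\infty$: $x^{k+1}\to_\K\bar x$ by (c), continuity of $q$ (\Cref{Ass:Cont}) and of $f$ give $g(x^{k+1})=q(x^{k+1})-f(x^{k+1})\to_\K g(\bar x)$, the middle term vanishes since $\gamma_k\|x^{k+1}-x^k\|\to_\K 0$, and the definition of the limiting subdifferential yields $0\in\nabla f(\bar x)+\partial g(\bar x)$. Apart from the index bookkeeping in (c), everything here merely repeats the average-line-search analysis, so in the write-up I would refer the reader to \cite{KanzowMehlitz2022} for the routine details.
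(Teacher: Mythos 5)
Your outline is correct and follows exactly the route the paper itself relies on: the paper states \cref{Pro:maxprevious} without proof, merely recalling the Grippo--Lampariello--Lucidi-type analysis of the max line search from \cite{KanzowMehlitz2022}, and your sketch of (a)--(c) (monotonicity of $q(x^{l(k)})$, the induction over $j\in\{0,\dots,m\}$ propagated by uniform continuity of $q$ on $\mathcal L_q(x^0)$, and the index identification $k+1=l(k+m+1)-r$) together with the transcription of \cref{Lem:tools} and \cref{Thm:M-sta} for (d) reconstructs precisely that argument. The only noteworthy deviation is harmless: in (d) you obtain $g(x^{k+1})\to_\K g(\bar x)$ directly from \cref{Ass:Cont} rather than via the subproblem comparison used in \cref{Thm:M-sta}, which is a legitimate shortcut in this setting.
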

\indent With the aid of \cref{Pro:maxprevious}~\ref{item:maxconvergence of gamma yimes sequence} as well as the techniques in \cref{Lem:stepsize is bounded somewhere}, we can obtain the following result.
\begin{lemma}\label{Lem:maxboundedgamma}
Let \cref{Ass:ProxGradNonMonotone} and \cref{Ass:Cont} hold, let $\{x^k\}_{k\in \mathbb N}$ be any sequence generated by \Cref{Alg:maxNonMonotoneProxGrad},  $\bar x$ is an accumulation point of $\{x^k\}_{k\in \mathbb N}$ and consequently let $\{x^k\}_{k\in \K}$ be a subsequence converging to some point $\bar x$. Then, for any $\rho>0$, there is a constant $\bar \gamma_\rho>0$ (depending on $\rho$) such that $\gamma_k \leq \bar \gamma_\rho$ for all $k\in \mathbb N$ such that $x^k \in B_\rho(\bar x)$.
\end{lemma}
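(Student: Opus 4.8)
The plan is to argue by contradiction, reproducing the mechanism already used in the proof of \cref{Lem:tools} (equivalently, the one behind \cref{Pro:maxprevious}~\ref{item:maxconvergence of gamma yimes sequence}), but now carefully tracking the dependence on $\rho$. Suppose no such $\bar\gamma_\rho$ exists. Then there is a sequence of indices $\{k_n\}_{n\in\mathbb N}$ with $x^{k_n}\in B_\rho(\bar x)$ for all $n$ and $\gamma_{k_n}\to\infty$; since a sequence tending to infinity admits no constant subsequence, the $k_n$ are eventually pairwise distinct, so $\{x^{k_n}\}$ is a subsequence of $\{x^k\}$. Because $B_\rho(\bar x)$ is compact, all quantities attached to $x^{k_n}$ stay bounded, in particular $\|\nabla f(x^{k_n})\|$ and, using \cref{Pro:maxprevious}~\ref{Item:maxlevel set of q} together with \cref{Ass:ProxGradNonMonotone}~\ref{itemnon:psi_bounded} and the continuity of $f$, also $g(x^{k_n})=q(x^{k_n})-f(x^{k_n})$. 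Note that one need not pass to a convergent subsequence of $\{x^{k_n}\}$: compactness of $B_\rho(\bar x)$ enters only through these uniform bounds.

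First I would pick out the last rejected trial stepsize. Eventually $\gamma_{k_n}>\gamma_{\max}$, hence $i_{k_n}\ge 1$, so $\hat\gamma_{k_n}:=\gamma_{k_n}/\tau=\tau^{i_{k_n}-1}\gamma_{k_n}^0\to\infty$, while the trial vector $\hat x^{k_n}:=x^{k_n,i_{k_n}-1}$ fails the acceptance criterion \eqref{Eq:maxNonStepCrit}. Since the index $j=0$ enters the maximum there, $\max_{j=0,\dots,m_{k_n}}q(x^{k_n-j})\ge q(x^{k_n})$, so the failure reads $q(\hat x^{k_n})>q(x^{k_n})-\delta\tfrac{\hat\gamma_{k_n}}{2}\|\hat x^{k_n}-x^{k_n}\|^2$, playing the role that \cref{Prof:Nonresults}~\ref{item: Phiq} played in \eqref{Eqnon:3-12}. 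On the other hand $\hat x^{k_n}$ solves the subproblem \eqref{Eq:maxNonSubki} with $\hat\gamma_{k_n}$, which gives $\langle\nabla f(x^{k_n}),\hat x^{k_n}-x^{k_n}\rangle+\tfrac{\hat\gamma_{k_n}}{2}\|\hat x^{k_n}-x^{k_n}\|^2+g(\hat x^{k_n})-g(x^{k_n})\le 0$. Combining this with \cref{Ass:ProxGradNonMonotone}~\ref{itemnon:phi_bounded_affine} (an affine lower bound $g\ge\langle a,\cdot\rangle+b$, after rewriting $\langle a,\hat x^{k_n}\rangle=\langle a,x^{k_n}\rangle+\langle a,\hat x^{k_n}-x^{k_n}\rangle$) and the uniform bounds above turns this inequality into a scalar quadratic estimate $\tfrac{\hat\gamma_{k_n}}{2}d_n^2\le M_1 d_n+M_2$ in $d_n:=\|\hat x^{k_n}-x^{k_n}\|$ with constants $M_1,M_2$ independent of $n$, whence $d_n\to 0$ because $\hat\gamma_{k_n}\to\infty$. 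In particular $\hat x^{k_n}\ne x^{k_n}$ for large $n$, since equality would make \eqref{Eq:maxNonStepCrit} trivially true.

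From here the computation is verbatim that of \cref{Lem:tools}: the mean-value theorem (legitimate by \cref{Ass:ProxGradNonMonotone}~\ref{itemnon:psi_bounded}) gives $q(\hat x^{k_n})-q(x^{k_n})=\langle\nabla f(\xi^{k_n}),\hat x^{k_n}-x^{k_n}\rangle+g(\hat x^{k_n})-g(x^{k_n})$ with $\xi^{k_n}$ on the segment $[x^{k_n},\hat x^{k_n}]$; subtracting this from the subproblem optimality and inserting the failed criterion yields $(1-\delta)\tfrac{\hat\gamma_{k_n}}{2}\|\hat x^{k_n}-x^{k_n}\|<\|\nabla f(x^{k_n})-\nabla f(\xi^{k_n})\|$. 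Now comes the one genuinely new point: since $x^{k_n}\in B_\rho(\bar x)$ and $d_n\to 0$, for large $n$ both $\hat x^{k_n}$ and $\xi^{k_n}$ lie in the fixed compact set $B_{\rho+1}(\bar x)$, on which \cref{Ass:ProxGradNonMonotone}~\ref{itemnon:local_Lipschitz} supplies a Lipschitz constant $L_{\rho+1}$ of $\nabla f$ depending only on $\rho$. Therefore $\|\nabla f(x^{k_n})-\nabla f(\xi^{k_n})\|\le L_{\rho+1}\|x^{k_n}-\xi^{k_n}\|\le L_{\rho+1}\|\hat x^{k_n}-x^{k_n}\|$, and dividing by $\|\hat x^{k_n}-x^{k_n}\|>0$ leaves $\hat\gamma_{k_n}<2L_{\rho+1}/(1-\delta)$, contradicting $\hat\gamma_{k_n}\to\infty$. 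Hence the desired bound holds, e.g.\ with $\bar\gamma_\rho:=\max\{\gamma_{\max},\,2\tau L_{\rho+1}/(1-\delta)\}$.

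The step I expect to require the most care is the one that differs from the average-line-search setting only in bookkeeping but is conceptually the crux: establishing $d_n\to 0$ \emph{without} any a priori boundedness of the full iterate sequence — this is exactly where \cref{Ass:ProxGradNonMonotone}~\ref{itemnon:phi_bounded_affine} is indispensable, used to bound $g(\hat x^{k_n})$ below by $\langle a,\hat x^{k_n}\rangle+b$ and then to absorb $\langle a,\hat x^{k_n}\rangle$ into $\langle a,x^{k_n}\rangle+\|a\|\,d_n$ — and, simultaneously, making sure that the Lipschitz constant invoked at the very end is drawn from a compact neighbourhood of $\bar x$ fixed by $\rho$ alone, which is precisely what forces $\bar\gamma_\rho$ to depend only on $\rho$ and the algorithmic parameters.
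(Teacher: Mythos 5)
Your argument is correct and follows essentially the route the paper itself points to (the rejected-trial-stepsize analysis of \cref{Lem:tools} adapted as in \cref{Lem:stepsize is bounded somewhere}): contradiction via $\gamma_{k_n}\to\infty$, the failed criterion \eqref{Eq:maxNonStepCrit} with the $j=0$ term replacing $\Phi_k\geq q(x^k)$, the affine minorant of $g$ plus compactness of $B_\rho(\bar x)$ to force $\|\hat x^{k_n}-x^{k_n}\|\to 0$, and the Lipschitz constant of $\nabla f$ on the fixed compact set $B_{\rho+1}(\bar x)$ to cap $\hat\gamma_{k_n}$. The only cosmetic blemish is the closing explicit formula $\bar\gamma_\rho=\max\{\gamma_{\max},\,2\tau L_{\rho+1}/(1-\delta)\}$, which is justified only within the contradiction framing (it presumes $\|\hat x^{k}-x^{k}\|\leq 1$ for every such $k$), but the existence statement of the lemma is fully proved.
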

We now discuss the convergence properties of the sequence generated by \Cref{Alg:maxNonMonotoneProxGrad} in the presence of the KL property of $q$ at the mentioned accumulation point $\bar x$. 
Let sufficiently small 
$\eta>0$ be the corresponding constant from the definition of the associated desingularization function $\chi$.  In view of \cref{Lem:maxboundedgamma}, we can find a sufficiently large index $\hat k \in \mathbb N$ such that
\begin{equation}\label{Eq:maxeta}
\sup_{k \geq \hat k-m-1} \|x^{k+1}-x^k\| \leq \eta.
\end{equation}
Let us now fix a constant $\mu \in (0, \delta \gamma_{\min})$, and define
\begin{equation}\label{Eq:definition for a}
K:=\big\{k \in \mathbb N \,|\,  q(x^{l(k+1)})-q(x^{k+1}) > \frac{\mu}{2}\|x^{k+1}-x^k\|^2\big\}
\end{equation}
and $\overline K:=\mathbb N \setminus K$. 
Define 
\begin{equation}\label{Eq:maxrho}
\rho:=(m+1)\eta+\frac{1}{2},
\end{equation}
as well as the compact set 
\begin{equation}\label{Eq:maxC_rho}
C_{\rho}:=B_{2\rho}(\bar x) \cap \mathcal L_q(x^0).
\end{equation}
Consequently, let $L_{\rho}>0$ be the (global) Lipschitz constant of $\nabla f$ on $C_{\rho}$ from \eqref{Eq:maxC_rho}. In view of \cref{Lem:maxboundedgamma}, let $\bar \gamma_{\rho}>0$ satisfy
\begin{equation}\label{Eq:maxgamma_rho}
\gamma_k \leq \bar \gamma_{\rho} \quad \forall x^k\in C_{\rho}.
\end{equation} 
\begin{proposition}\label{Prop:Cons}
Let \cref{Ass:ProxGradNonMonotone} and \cref{Ass:Cont} hold, and let $ \{ x^k \}_{k\in \mathbb N} $
be any sequence generated by \Cref{Alg:maxNonMonotoneProxGrad}. Then there exists a constant $c_{\hat k}>0$ such that
\begin{equation*}
\|x^{l(k+1)}-x^{k+1}\| \leq c_{\hat k} \|x^{l(k+1)}-x^{l(k+1)-1}\| \quad \forall k \geq \hat k,
\end{equation*}
where $\hat k$ is defined in \eqref{Eq:maxeta}.
\end{proposition}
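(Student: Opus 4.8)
The plan is to distinguish two cases according to whether $l(k+1)=k+1$. If $l(k+1)=k+1$, the left-hand side is zero and nothing is to be shown, so assume $l(k+1)\le k$. By the definition of $l(\cdot)$ one has $k+1-l(k+1)\le m_{k+1}\le m$, so the telescoping identity $x^{l(k+1)}-x^{k+1}=\sum_{j=l(k+1)}^{k}(x^{j}-x^{j+1})$ together with the triangle and Cauchy--Schwarz inequalities gives $\|x^{l(k+1)}-x^{k+1}\|^{2}\le m\sum_{j=l(k+1)}^{k}\|x^{j+1}-x^{j}\|^{2}$, a sum of at most $m$ squared steps.

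Each of these steps is controlled by the acceptance criterion \eqref{Eq:maxNonStepCrit}: since $\gamma_j\ge\gamma_{\min}$, one has $\|x^{j+1}-x^{j}\|^{2}\le\frac{2}{\delta\gamma_{\min}}(q(x^{l(j)})-q(x^{j+1}))$, so the whole problem reduces to bounding $\sum_{j=l(k+1)}^{k}(q(x^{l(j)})-q(x^{j+1}))$ by a fixed multiple of $\|x^{l(k+1)}-x^{l(k+1)-1}\|^{2}$. Here I would exploit the window structure of the max rule: $l(k+1)$ lies in the window $\{j-m_j,\dots,j\}$ of every $j\in\{l(k+1),\dots,k\}$ as well as in the window of $k+1$, hence by the monotonicity of $\{q(x^{l(k)})\}_{k\in\mathbb N}$ from \cref{Pro:maxprevious}~\ref{item:maxdecreasingPhi} the reference values $q(x^{l(j)})$ and the attained values $q(x^{j+1})$ appearing in the sum are all clustered around the common number $q(x^{l(k+1)})$. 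Turning this clustering into a quantitative bound in terms of the entry step calls for the uniform continuity of $q$ on $\mathcal L_q(x^{0})$ (valid by \cref{Ass:ProxGradNonMonotone}~\ref{itemnon:phi_bounded_affine} and \cref{Ass:Cont}), the Lipschitz continuity of $\nabla f$ on the compact set $C_\rho$ from \eqref{Eq:maxC_rho}, and the acceptance criterion at iteration $l(k+1)-1$, which relates $q(x^{l(l(k+1)-1)})-q(x^{l(k+1)})$ to $\|x^{l(k+1)}-x^{l(k+1)-1}\|^{2}$. For $k\ge\hat k$ the iterates entering the argument remain in $C_\rho$ by \eqref{Eq:maxeta} and \eqref{Eq:maxC_rho}, so the stepsize bound $\gamma_k\le\bar\gamma_\rho$ of \eqref{Eq:maxgamma_rho} and the Lipschitz constant $L_\rho$ are at our disposal; the resulting constant $c_{\hat k}$ therefore depends on $m$, $\delta$, $\gamma_{\min}$, $\bar\gamma_\rho$, $L_\rho$ and the modulus of continuity of $q$ on $\mathcal L_q(x^{0})$, which explains the subscript $\hat k$.

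Chaining the two displayed estimates gives $\|x^{l(k+1)}-x^{k+1}\|^{2}\le\frac{2m}{\delta\gamma_{\min}}\sum_{j=l(k+1)}^{k}(q(x^{l(j)})-q(x^{j+1}))\le c_{\hat k}^{2}\|x^{l(k+1)}-x^{l(k+1)-1}\|^{2}$, and taking square roots finishes the proof. The step I expect to be the main obstacle is exactly the estimate $\sum_{j=l(k+1)}^{k}(q(x^{l(j)})-q(x^{j+1}))\le C\|x^{l(k+1)}-x^{l(k+1)-1}\|^{2}$: in the average-line-search algorithm the analogous quantity is controlled by the most recent step directly through \eqref{Eq:decreasingPhi}, whereas the max rule offers no per-step descent, so the cumulative decrease over the at most $m$ iterations separating $x^{l(k+1)}$ from $x^{k+1}$ must be traced back to the single entry step; it is precisely here that local Lipschitz continuity of $\nabla f$ and uniform continuity of $q$ are used in an essential way rather than only for well-posedness. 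A minor point is the boundary range $k<m$, in which $m_k<m$ merely shrinks the windows and creates no difficulty.
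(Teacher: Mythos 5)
Your proposal is incomplete at exactly the point that matters. The opening reductions are unobjectionable: the window bound $k+1-l(k+1)\le m_{k+1}\le m$, the telescoping/Cauchy--Schwarz estimate $\|x^{l(k+1)}-x^{k+1}\|^2\le m\sum_{j=l(k+1)}^{k}\|x^{j+1}-x^j\|^2$, and the per-step bound $\|x^{j+1}-x^j\|^2\le\tfrac{2}{\delta\gamma_{\min}}\bigl(q(x^{l(j)})-q(x^{j+1})\bigr)$ from \eqref{Eq:maxNonStepCrit} are all correct. But the decisive claim, $\sum_{j=l(k+1)}^{k}\bigl(q(x^{l(j)})-q(x^{j+1})\bigr)\le C\,\|x^{l(k+1)}-x^{l(k+1)-1}\|^2$ with a $k$-independent $C$, is precisely what you leave unproven, and the tools you name cannot deliver it. Uniform continuity of $q$ on $\mathcal L_q(x^0)$ is purely qualitative and gives no bound of a function-value gap by the \emph{square of a single step}; the acceptance criterion at iteration $l(k+1)-1$ points in the wrong direction (it bounds $\|x^{l(k+1)}-x^{l(k+1)-1}\|^2$ \emph{above} by a function-value gap, whereas your reduction needs the entry step to dominate the accumulated decrease over the window from below); and monotonicity of $\{q(x^{l(k)})\}$ only says the summands eventually become small, not that they are controlled by the entry step. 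Indeed the entry step can be arbitrarily small (even zero, if the prox subproblem at $l(k+1)-1$ returns $x^{l(k+1)-1}$) while later iterates in the window still move, so this route cannot be closed with the cited ingredients; the ``main obstacle'' you flag is essentially the whole content of the proposition, restated.

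The paper's own argument is of a completely different, much shorter kind and never touches function values: it argues by contradiction, using \eqref{Eq:maxeta} to get the uniform bound $\|x^{l(\bar k+1)}-x^{\bar k+1}\|\le m\eta$ for $\bar k\ge\hat k$, and then choosing the constant $c$ as $1/\|x^{l(\bar k+1)}-x^{l(\bar k+1)-1}\|$ to force $m\eta>1$, contradicting the smallness of $\eta$. (Be aware that this argument is itself delicate, since $\bar k$ is produced from $c$ and the final choice of $c$ depends on $\bar k$; but in any case it is not the descent-accounting argument you sketch.) So your attempt is not a variant of the paper's proof with a missing detail; it is a different strategy whose key inequality is neither established nor, as far as I can see, obtainable from uniform continuity and the line-search inequality alone.
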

\begin{proof}
By contradiction, for arbitrary $c>0$, there exists a $\bar k \geq \hat k$ such that
\begin{equation*}
\|x^{l(\bar k+1)}-x^{\bar k+1}\| > c \|x^{l(\bar k+1)}-x^{l(\bar k+1)-1}\|.
\end{equation*}
Meanwhile,  for the sufficiently large $\bar k \geq \hat k$, from \eqref{Eq:maxeta}, one has
\begin{equation*}
\|x^{l(\bar k+1)}-x^{\bar k+1}\| \leq \sum_{i=\bar k+1-m}^{\bar k+1} \|x^{i+1}-x^i\| \leq m\eta.
\end{equation*}
Therefore, one has
\begin{equation*}
m \eta > c \|x^{l(\bar k+1)}-x^{l(\bar k+1)-1}\|.
\end{equation*}
Now, choose \( c := \frac{1}{\|x^{l(\bar{k}+1)} - x^{l(\bar{k}+1)-1}\|} \), we obtain:
\[
m \eta > 1,
\]
which contradicts the fact that \( \eta \) is sufficiently small and \( m > 0 \) is a fixed number. 
\end{proof}
Thanks to the above notation, 
we can identify a new neighborhood centered at the accumulation point, with a very small radius, which is primarily used to to ensure the convergence of the whole sequence under the assumption of the KL property of $q$, we now introduce such radius.  
\begin{lemma}\label{LemNon:maxalpha-small}
Let \cref{Ass:ProxGradNonMonotone} and \cref{Ass:Cont} hold, and let $ \{ x^k \}_{k\in \mathbb N} $
be any sequence generated by \Cref{Alg:maxNonMonotoneProxGrad}.
Suppose that $ \{ x^k \}_{k\in \K} $ is a subsequence converging to
some limit point $ \bar x $, and that $ q $ has the KL property
at $\bar x$ with desingularization function $ \chi $. Then there is a sufficiently large constant $ k_0 \in \K \cap \overline K $
such that $k_0-m-1$ is also sufficiently large,  where $K$ is defined by \eqref{Eq:definition for a}, then the corresponding constant is defined as
\begin{equation}\label{EqNon:maxalpha}
\begin{aligned}
   \alpha &:= 
   \| x^{k_0} - \bar x \| \\
& \leq \left( \frac{8\sqrt{2}m\left(c_{\hat k}+1\right)(\bar \gamma_{\rho}+L_{\rho})}{3\delta \gamma_{\min}}\sqrt{\frac{\bar \gamma_{\rho}+L_{\rho}}{\mu}}+\frac{m}{3}\sqrt{\frac{2(\bar \gamma_{\rho}+L_{\rho})}{\delta \gamma_{\min}-\mu}}\left(\frac{3}{m}+\frac{2(\bar \gamma_{\rho}+L_{\rho})}{\delta \gamma_{\min}}\right)\right)\\
&~~~~\cdot \chi\big(q(x^{l(k_0)})-q(\bar x)\big)\\
&~~~~+ \left(\frac{4\sqrt{2}m\left(c_{\hat k}+1\right)}{3}\sqrt{\frac{\bar \gamma_{\rho}+L_{\rho}}{\mu}}+\frac{m}{3}\sqrt{\frac{2(\bar \gamma_{\rho}+L_{\rho})}{\delta \gamma_{\min}-\mu}}+\sqrt{\frac{\delta \gamma_{\min}}{\delta \gamma_{\min}-\mu}}\right)\\
&~~~~\cdot\sqrt{\frac{2\big(q(x^{l(k_0-m-1)})-q(\bar x)\big)}{\delta \gamma_{\min}}}\\
\end{aligned}
\end{equation}
satisfies $ \alpha < \frac{1}{2}$, where $ \rho>0$, $\bar\gamma_\rho>0$, $\mu>0$ are
the constants defined in \eqref{Eq:maxrho}, \eqref{Eq:maxgamma_rho},  and \eqref{Eq:definition for a}, respectively,
while 
$ \delta>0 $ and $ \gamma_{\min}>0 $ are the parameters from
\Cref{Alg:maxNonMonotoneProxGrad}.
\end{lemma}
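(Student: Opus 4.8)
The plan is to argue exactly as in the proof of \cref{LemNon:beta-small}: show that the right-hand side of \eqref{EqNon:maxalpha} is a finite sum in which each summand is a fixed positive constant, built only from the algorithmic and geometric data ($m$, $\delta$, $\gamma_{\min}$, $\mu$, $c_{\hat k}$, $\bar\gamma_\rho$, $L_\rho$), multiplied by one of the three quantities $\|x^{k_0}-\bar x\|$, $\chi\big(q(x^{l(k_0)})-q(\bar x)\big)$, $\sqrt{q(x^{l(k_0-m-1)})-q(\bar x)}$; then drive all three to zero by taking $k_0\in\K\cap\overline K$ large. Since $k_0-m-1\to\infty$ along any such choice, the side condition that $k_0-m-1$ be sufficiently large (needed so that \eqref{Eq:maxeta} applies and the expression $q(x^{l(k_0-m-1)})$ is meaningful) comes for free.

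First I would record that $q(x^{l(k)})\to q(\bar x)$ as $k\to\infty$. By \cref{Pro:maxprevious}~\ref{item:maxdecreasingPhi} the sequence $\{q(x^{l(k)})\}$ is non-increasing, and by \cref{Ass:ProxGradNonMonotone}~\ref{itemnon:psi_bounded} it is bounded below, hence convergent. Along $\K$ one has $\|x^{l(k)}-x^k\|\le\sum_{i=l(k)}^{k-1}\|x^{i+1}-x^i\|$, a sum of at most $m$ terms each tending to $0$ by \cref{Pro:maxprevious}~\ref{item:maxconvergence of sequence}, so $x^{l(k)}\to_\K\bar x$; continuity of $q$ (\cref{Ass:Cont}) then forces the limit of the whole monotone sequence to be $q(\bar x)$. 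In particular both $q(x^{l(k_0)})-q(\bar x)$ and $q(x^{l(k_0-m-1)})-q(\bar x)$ are nonnegative and tend to $0$ as $k_0\to\infty$, the second because $k_0\mapsto k_0-m-1$ is again a divergent index. Combining this with $\|x^{k_0}-\bar x\|\to0$ (as $\{x^k\}_{k\in\K}\to\bar x$), the continuity of $\chi$ together with $\chi(0)=0$, and the continuity of $\sqrt{\,\cdot\,}$ at $0$, every summand on the right-hand side of \eqref{EqNon:maxalpha} tends to $0$; hence for $k_0$ large enough along $\K\cap\overline K$ the bound, and therefore $\alpha$, is strictly below $\tfrac12$.

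The one genuinely nontrivial step, which I expect to be the main obstacle, is the requirement $k_0\in\K\cap\overline K$ rather than merely $k_0\in\K$. For the statement to have content one must know that $\overline K$ is cofinal along $\K$; otherwise $\K$ would lie eventually in $K$, i.e.\ $q(x^{l(k+1)})-q(x^{k+1})>\tfrac\mu2\|x^{k+1}-x^k\|^2$ for all large $k\in\K$, an alternative that would have to be dispatched by a separate argument (it describes an ``almost monotone'' regime along $\K$, to be reduced to the monotone analysis). Establishing this dichotomy cleanly — equivalently, showing that the forthcoming induction in the global convergence theorem can always be anchored at an index of $\overline K$ — is the delicate point; the remaining ingredients are the routine continuity and monotonicity arguments described above, exactly as in \cref{LemNon:beta-small}.
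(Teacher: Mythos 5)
Your argument is correct and essentially identical to the paper's own proof: the paper likewise just observes that $\|x^{k_0}-\bar x\|$, $\chi\big(q(x^{l(k_0)})-q(\bar x)\big)$, and $q(x^{l(k_0-m-1)})-q(\bar x)$ all tend to zero as $k_0\to\infty$ along $\K$ (via monotonicity of $\{q(x^{l(k)})\}$, \cref{Ass:Cont}, and continuity of $\chi$ with $\chi(0)=0$), so every summand in \eqref{EqNon:maxalpha} can be made arbitrarily small. The cofinality of $\overline K$ along $\K$ that you flag as the delicate point is not addressed in the paper's proof either—it is simply taken for granted there—so your proposal is not weaker than the paper's argument on that score.
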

\begin{proof}
From the fact that $x^k \to_{\K} \bar x$, the decrease of $\{q(x^{l(k)})\}_{k\in \mathbb N}$, we have $\|x^{k_0}-\bar x\|$ is sufficiently small by the sufficiently large $k_0 \in \K$,  and $q(x^{l(k)})-q(\bar x) \to 0$ from \cref{Ass:Cont}. Then we have $\chi(q(x^{l(k)})-q(\bar x)) \to 0$ from the continuity of $\chi$ and $\chi(0)=0$. Meanwhile, because $k_0-m-1$ is sufficiently large, we have $q(x^{l(k_0-m-1)})-q(\bar x)$ is sufficiently small.  Totally, we have $\alpha<\frac{1}{2}$.
\end{proof}
Taking the similar analysis to \cref{LemNon:DistanceSubgrad}, we obtain the following result.
\begin{lemma}\label{LemNon:maxDistanceSubgrad}
Let \cref{Ass:ProxGradNonMonotone} and \cref{Ass:Cont} hold, and let $ \{ x^k \}_{k\in \mathbb N} $
be any sequence generated by \Cref{Alg:maxNonMonotoneProxGrad}.
Suppose that $ \{ x^k \}_{k \in \K} $ is a subsequence converging to
some limit point $ \bar x $, 
then
\begin{equation*}
	\dist\big( 0, \partial q (x^{k+1}) \big) \leq 
	\big( \bar \gamma_{\rho} + L_{\rho} \big) \| x^{k+1} - 
	x^k \|
\end{equation*}
holds for all sufficiently large 
$ k\geq k_0$ such that $ x^k \in B_{\rho} (\bar x) $,
where $ \rho $ denotes the constant from  \eqref{Eq:maxrho},
$ \bar \gamma_{\rho}>0$ is the constant from \eqref{Eq:maxgamma_rho},
and $L_{\rho}>0$ is the Lipschitz constant of $\nabla f$ on $C_\rho$
from \eqref{Eq:maxC_rho}.
\end{lemma}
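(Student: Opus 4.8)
The plan is to mimic the proof strategy already used for \Cref{LemNon:DistanceSubgrad} in the average-line-search setting, adapting it to the max-type iterates. First I would invoke the optimality condition for the subproblem \eqref{Eq:maxNonSubki}: since $x^{k+1}=x^{k,i_k}$ is a minimizer of the strongly convex surrogate with parameter $\gamma_k$, the sum rule \eqref{eq:sum_rule} together with Fermat's rule gives
\[
0 \in \nabla f(x^k) + \gamma_k\bigl(x^{k+1}-x^k\bigr) + \partial g(x^{k+1}),
\]
so that $-\nabla f(x^k) - \gamma_k(x^{k+1}-x^k) \in \partial g(x^{k+1})$. Adding $\nabla f(x^{k+1})$ to both sides and using the sum rule once more yields an explicit element
\[
\eta^{k+1} := \nabla f(x^{k+1}) - \nabla f(x^k) - \gamma_k\bigl(x^{k+1}-x^k\bigr) \in \partial q(x^{k+1}).
\]

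Next I would estimate $\|\eta^{k+1}\|$. The triangle inequality gives $\|\eta^{k+1}\| \le \|\nabla f(x^{k+1})-\nabla f(x^k)\| + \gamma_k\|x^{k+1}-x^k\|$. To bound the first term by $L_\rho\|x^{k+1}-x^k\|$ I need both $x^k$ and $x^{k+1}$ to lie in the set $C_\rho$ from \eqref{Eq:maxC_rho} on which $\nabla f$ is $L_\rho$-Lipschitz; here the hypothesis $x^k\in B_\rho(\bar x)$ is used, and since $C_\rho = B_{2\rho}(\bar x)\cap\mathcal L_q(x^0)$, one has $x^k\in C_\rho$ by \cref{Pro:maxprevious}~\ref{Item:maxlevel set of q} and $x^{k+1}\in C_\rho$ because, for $k\geq\hat k$, \eqref{Eq:maxeta} gives $\|x^{k+1}-x^k\|\le\eta\le\rho$, so $x^{k+1}\in B_{2\rho}(\bar x)$, again in the level set. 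To bound the second term I use \cref{Lem:maxboundedgamma} (equivalently \eqref{Eq:maxgamma_rho}): since $x^k\in C_\rho$, $\gamma_k\le\bar\gamma_\rho$. Combining, $\|\eta^{k+1}\|\le(\bar\gamma_\rho+L_\rho)\|x^{k+1}-x^k\|$, and since $\eta^{k+1}\in\partial q(x^{k+1})$ this dominates $\dist(0,\partial q(x^{k+1}))$, which is exactly the claim.

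The main obstacle, and the only genuinely delicate point, is the bookkeeping around which neighborhood and which set the iterates are guaranteed to lie in: the statement assumes only $x^k\in B_\rho(\bar x)$, but the Lipschitz constant $L_\rho$ is attached to the larger set $C_\rho=B_{2\rho}(\bar x)\cap\mathcal L_q(x^0)$, so I must carefully check that the $\eta$-smallness of $\|x^{k+1}-x^k\|$ for $k\ge\hat k$ — and the fact that $2\rho > \rho + \eta$, which follows from \eqref{Eq:maxrho} since $(m+1)\eta+\tfrac12>\eta+\eta+\tfrac12$ for $m\ge1$ — really does push $x^{k+1}$ into $B_{2\rho}(\bar x)$, while membership in $\mathcal L_q(x^0)$ is automatic from \cref{Pro:maxprevious}. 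Everything else is the routine subgradient-plus-Lipschitz computation already carried out in \Cref{LemNon:DistanceSubgrad}; I would simply remark that the argument is verbatim that proof with $C_\rho$ and $\rho$ now taken from \eqref{Eq:maxC_rho} and \eqref{Eq:maxrho}, and omit the repeated details.
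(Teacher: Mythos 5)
Your proposal is correct and follows essentially the same route as the paper, which omits this proof precisely because it is the verbatim analogue of \cref{LemNon:DistanceSubgrad}: the optimality condition for \eqref{Eq:maxNonSubki} yields the explicit subgradient $\nabla f(x^{k+1})-\nabla f(x^k)-\gamma_k(x^{k+1}-x^k)\in\partial q(x^{k+1})$, which is then bounded by $(\bar\gamma_\rho+L_\rho)\|x^{k+1}-x^k\|$ once $x^k,x^{k+1}\in C_\rho$ and $\gamma_k\le\bar\gamma_\rho$ are checked. Only a cosmetic remark: your side condition ``$2\rho>\rho+\eta$ for $m\ge1$'' is neither needed in that form nor strict at $m=1$; the required fact is simply $\rho\ge\eta$, which is immediate from \eqref{Eq:maxrho} for every $m\ge0$, so the bookkeeping goes through without any restriction on $m$.
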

By employing the KL property,  we next illustrate that the subsequential trajectory has a finite length.
\begin{theorem}\label{Thmnon:maxauxConv}
Let \cref{Ass:ProxGradNonMonotone} and \cref{Ass:Cont} hold, and let $ \{ x^k \}_{k\in \mathbb N} $
be any sequence generated by \Cref{Alg:maxNonMonotoneProxGrad}.
Suppose that $ \{ x^k \}_{k\in \K} $ is a subsequence converging to
some limit point $ \bar x $, and that $ q $ has the KL property
at $\bar x$.  If $q(x^{l(k)}) \neq q(\bar x)$ for all $k \in \mathbb N$  and for any $v \geq k_0$ where $k_0$ is used to define $\alpha$ in \cref{LemNon:maxalpha-small},  if $x^k\in B_{\alpha}(\bar x)$ holds for all $k_0 \leq k \leq v$, then we have
\begin{equation*}
\sum_{k=k_0}^{v} \|x^{l(k)}-x^{l(k)-1}\| \leq \frac{4m}{3}\left( \sqrt{\frac{2\big(q(x^{l(k_0-m-1)})-q(\bar x)\big)}{\delta \gamma_{\min}}}+\frac{2(\bar \gamma_{\rho}+L_{\rho})}{\delta \gamma_{\min}}\chi\big(q(x^{l(k_0)})-q(\bar x)\big) \right).
\end{equation*}
\end{theorem}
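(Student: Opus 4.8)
The plan is to transplant the Attouch--Bolte--Svaiter-type descent argument used in the monotone and average-line-search settings onto the ``record'' subsequence $\{x^{l(k)}\}$, while absorbing the fact that the max-criterion \eqref{Eq:maxNonStepCrit} only forces a decrease of $q(x^{l(k)})$ that is smeared over a window of $m+1$ consecutive indices. Write $a_k:=q(x^{l(k)})-q(\bar x)$ and $b_k:=\|x^{l(k)}-x^{l(k)-1}\|$; by \cref{Pro:maxprevious}~\ref{item:maxdecreasingPhi} and \cref{Ass:Cont} the sequence $\{a_k\}$ is nonincreasing with $a_k\to 0$, and the hypothesis $q(x^{l(k)})\neq q(\bar x)$ forces $a_k>0$. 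First I would establish a sufficient-decrease estimate \emph{along the record subsequence}: applying \eqref{Eq:maxNonStepCrit} at iteration $l(k)-1$ and using $l(k)-1\ge k-m-1$ together with the monotonicity of $\{q(x^{l(k)})\}$ yields
\[
   b_k^2 \;\le\; \tfrac{2}{\delta\gamma_{\min}}\bigl(a_{k-m-1}-a_k\bigr).
\]
One then checks, using the hypothesis $x^k\in B_\alpha(\bar x)$ for $k_0\le k\le v$ together with \eqref{Eq:maxeta} and the triangle inequality, that \emph{every} iterate with index in $\{k_0-m-1,\dots,v\}$ lies in $B_\rho(\bar x)$ (this is exactly why the radius in \eqref{Eq:maxrho} is enlarged to $(m+1)\eta+\tfrac12$), and that $a_k<\eta$ for all these indices once $k_0$ is large. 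Hence \cref{LemNon:maxDistanceSubgrad} gives $\dist(0,\partial q(x^{l(k)}))\le(\bar\gamma_\rho+L_\rho)b_k$ and the KL inequality at $x^{l(k)}$ gives $\chi'(a_k)(\bar\gamma_\rho+L_\rho)b_k\ge 1$; in particular $b_k>0$.

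The core step is a gapped recursion. Shifting the sufficient-decrease estimate by $m+1$ gives $a_k-a_{k+m+1}\ge\tfrac{\delta\gamma_{\min}}{2}b_{k+m+1}^2$; concavity of $\chi$ on $[0,\eta]$ gives $\chi(a_k)-\chi(a_{k+m+1})\ge\chi'(a_k)(a_k-a_{k+m+1})$; and inserting the lower bound for $\chi'(a_k)$ just obtained yields
\[
   b_{k+m+1}^2 \;\le\; \tfrac{2(\bar\gamma_\rho+L_\rho)}{\delta\gamma_{\min}}\bigl(\chi(a_k)-\chi(a_{k+m+1})\bigr)\,b_k .
\]
Writing $C:=\tfrac{2(\bar\gamma_\rho+L_\rho)}{\delta\gamma_{\min}}$ and $\Delta_k:=\chi(a_k)-\chi(a_{k+m+1})$, and using $b_{k+m+1}=\sqrt{(2C\Delta_k)(\tfrac12 b_k)}\le C\Delta_k+\tfrac14 b_k$ (Young's inequality, the weight being chosen precisely so that the self-referential term carries the coefficient $\tfrac14$), this gives, with $i=k+m+1$,
\[
   b_i \;\le\; C\bigl(\chi(a_{i-m-1})-\chi(a_i)\bigr)+\tfrac14\,b_{i-m-1}
\]
for every $i$ in the admissible range.

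Finally I would sum over $i$. Split $\sum_{k=k_0}^{v}b_k$ into the at most $m+1$ boundary terms with index in $\{k_0,\dots,k_0+m\}$, each bounded crudely by $\sqrt{2a_{k_0-m-1}/(\delta\gamma_{\min})}$ via the sufficient-decrease estimate, and the remaining terms, to which the gapped recursion applies. Summing the recursion and organising the telescoping by residue classes modulo $m+1$, the $\chi$-part collapses to at most $(m+1)\chi(a_{k_0})$, while $\sum_i\tfrac14 b_{i-m-1}\le\tfrac14\sum_{k=k_0}^{v}b_k$. Denoting $T:=\sum_{k=k_0}^{v}b_k$, this yields $\tfrac34 T\le(m+1)\bigl(\sqrt{2a_{k_0-m-1}/(\delta\gamma_{\min})}+C\chi(a_{k_0})\bigr)$, which is the claimed estimate after substituting $a_{k_0}=q(x^{l(k_0)})-q(\bar x)$ and $a_{k_0-m-1}=q(x^{l(k_0-m-1)})-q(\bar x)$ (the stated prefactor being $\tfrac43$ times a quantity linear in $m$).

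I expect the main obstacle to be exactly this gapped structure: the only descent available is of $q(x^{l(\cdot)})$ over windows of $m+1$ steps, while the subgradient and KL estimates tie together neighbouring quantities, so the telescoping must be carried out modulo $m+1$, and one has to verify that the $m+1$ ``pre-$k_0$'' iterates still lie in $B_\rho(\bar x)$ --- the sole reason for enlarging the radius in \eqref{Eq:maxrho} --- and that $a_k$ never leaves $(0,\eta)$ so that $\chi$ remains differentiable. Everything else --- the optimality condition of the prox-subproblem that feeds \cref{LemNon:maxDistanceSubgrad}, the concavity manipulations, and the Young/telescoping bookkeeping --- is a routine adaptation of the argument underlying \cref{LemNon:DistanceSubgrad} and \cref{Thmnon:GlobConv} from the average-line-search case.
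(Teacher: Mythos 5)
Your proposal follows essentially the same route as the paper's own proof: the sufficient-decrease estimate for $q(x^{l(\cdot)})$ over an $(m+1)$-window, the KL inequality combined with \cref{LemNon:maxDistanceSubgrad} applied at $x^{l(k)}$, concavity of $\chi$, the Young-type inequality with weight $\tfrac14$, and telescoping by residue classes modulo $m+1$ with the boundary block $\{k_0,\dots,k_0+m\}$ bounded separately via the step-size criterion. The only discrepancy is a bookkeeping constant: your boundary block has $m+1$ terms, giving a prefactor $\tfrac{4(m+1)}{3}$ rather than the stated $\tfrac{4m}{3}$, but the paper's own summation over $k=k_0,\dots,k_0+m$ likewise contains $m+1$ terms while being written with the factor $m$, so this is a slip in the paper's constant rather than a gap in your argument.
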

\begin{proof}

Without loss of generality,  we also assume that $k_0 > \hat k+m$ (defined by \eqref{Eq:maxeta}) and $k_0$ is sufficiently large to satisfy
\begin{equation*}
q(x^{l(k_0)})<q(\bar x)+\eta,
\end{equation*}
and then 
\begin{equation}\label{Eq:maxk0}
q(\bar x)<q(x^{l(k)})\leq q(x^{l(k_0)})<q(\bar x)+\eta \quad \forall k \geq k_0.
\end{equation}
Let $\chi: [0,\eta] \to [0, \infty)$ be the desingularization function which validates of the KL property of $q$. Due to $\chi(0)=0$ and $\chi'(t)>0$ for all $t\in (0,\eta)$, one obtains
\begin{equation*}
\chi\big(q(x^{l(k)})-q(\bar x)\big) \geq 0 \quad \forall k \geq k_0.
\end{equation*}
For any $v \geq k_0$,  if $x^k \in B_{\alpha}(\bar x)$ holds for all $ k_0 \leq k \leq v$, one has  $x^{l(k)} \in B_{\alpha+m\eta}(\bar x)$ for all $k \geq k_0$ from \eqref{Eq:maxeta},
then
KL property of $q$ at $\bar x$ yields that
\begin{equation*}
\chi'\big(q(x^{l(k)})-q(\bar x)\big)\dist\big(0, \partial q(x^{l(k)})\big) \geq 1 \quad \forall k_0 \leq k \leq v,
\end{equation*}
which deduces from the concavity of $\chi$ that, 
\begin{equation}\label{Eq:KLineq}
\chi\big(q(x^{l(k)})-q(\bar x)\big)-\chi\big(q(x^{l(k+m+1)})-q(\bar x)\big) \geq \frac{q(x^{l(k)})-q(x^{l(k+m+1)})}{\dist(0, \partial q(x^{l(k)}))} \quad \forall  k_0 \leq k \leq v.
\end{equation}
By \eqref{Eq:maxNonStepCrit}, we have
\begin{equation}\label{Eq:preplusKL}
\begin{aligned}
\delta\frac{\gamma_{\min}}{2}\|x^{l(k+m+1)}-x^{l(k+m+1)-1}\|^2 &\leq q(x^{l(l(k+m+1)-1)})-q(x^{l(k+m+1)}) \\
&\leq q(x^{l(k)})-q(x^{l(k+m+1)}) \quad \forall k\in \mathbb N.
\end{aligned}
\end{equation}
Due to $\|x^{l(k)-1}-\bar x\| \leq \|x^{l(k)-1}-x^{l(k)}\|+\|x^{l(k)}-\bar x\|\leq \alpha+(m+1)\eta<\rho$ for all $k_0 \leq k \leq v$ by \eqref{Eq:maxeta},  then \cref{LemNon:maxDistanceSubgrad} implies
\begin{equation*}
\dist \big(0, \partial q(x^{l(k)}) \big) \leq (\bar \gamma_{\rho}+L_{\rho})\|x^{l(k)}-x^{l(k)-1}\|.
\end{equation*}
Then \eqref{Eq:KLineq} and \eqref{Eq:preplusKL}  imply that
\begin{equation}
\chi\big(q(x^{l(k)}-q(\bar x)\big)-\chi\big(q(x^{l(k+m+1)})-q(\bar x)\big) \geq \frac{\delta\frac{\gamma_{\min}}{2}\|x^{l(k+m+1)}-x^{l(k+m+1)-1}\|^2}{(\bar \gamma_{\rho}+L_{\rho})\|x^{l(k)}-x^{l(k)-1}\|}
\end{equation}
holds for all  $k_0 \leq k \leq v$ satisfying $x^k \in B_{\alpha}(\bar x)$.
Employing the inequality $\sqrt{ab} \leq \frac{1}{4} a+b$ for any $a \geq 0$ and $b \geq 0$ and setting $\tau:=\frac{2(\bar \gamma_{\rho}+L_{\rho})}{\delta \gamma_{\min}}$, we have
\begin{equation*}
\begin{aligned}
&\|x^{l(k+m+1)}-x^{l(k+m+1)-1}\| \\
&\leq \sqrt{\tau\|x^{l(k)}-x^{l(k)-1}\|\big(\chi\big(q(x^{l(k)}-q(\bar x)\big)-\chi\big(q(x^{l(k+m+1)})-q(\bar x)\big) \big)}\\
& \leq \frac{1}{4}\|x^{l(k)}-x^{l(k)-1}\|+\tau\big(\chi\big(q(x^{l(k)}-q(\bar x)\big)-\chi\big(q(x^{l(k+m+1)})-q(\bar x)\big) \big)
\end{aligned}
\end{equation*}
for all $k_0 \leq k \leq v$ satisfying $x^k \in B_{\alpha}(\bar x)$.
Then,  summation yields
\begin{equation*}
\begin{aligned}
&\sum_{k=k_0+m+1}^{v+m+1} \|x^{l(k)}-x^{l(k)-1}\|=\sum_{k=k_0}^v \|x^{l(k+m+1)}-x^{l(k+m+1)-1}\| \\
&\leq \frac{1}{4}\sum_{k=k_0}^v\|x^{l(k)}-x^{l(k)-1}\|+\tau\sum_{k=k_0}^v \chi\big(q(x^{l(k)}-q(\bar x)\big)-\chi\big(q(x^{l(k+m+1)})-q(\bar x)\big)\\
& \leq  \frac{1}{4}\sum_{k=k_0}^v\|x^{l(k)}-x^{l(k)-1}\|+\tau\sum_{k=k_0}^{k_0+m} \chi\big(q(x^{l(k)})-q(\bar x)\big),
\end{aligned}
\end{equation*}
equivalently,
\begin{equation*}
\frac{3}{4}\sum_{k=k_0+m+1}^{v+m+1} \|x^{l(k)}-x^{l(k)-1}\| \leq \frac{1}{4}\sum_{k=k_0}^{k_0+m}\|x^{l(k)}-x^{l(k)-1}\|+\tau\sum_{k=k_0}^{k_0+m} \chi\big(q(x^{l(k)})-q(\bar x)\big),
\end{equation*}
which implies from \eqref{Eq:maxNonStepCrit} and \cref{Pro:maxprevious} \ref{item:maxdecreasingPhi} that
\begin{equation*}
\begin{aligned}
\frac{3}{4}\sum_{k=k_0}^{v+m+1} \|x^{l(k)}-x^{l(k)-1}\|
&\leq \sum_{k=k_0}^{k_0+m}\|x^{l(k)}-x^{l(k)-1}\|+\tau\sum_{k=k_0}^{k_0+m} \chi\big(q(x^{l(k)})-q(\bar x)\big)\\
& \leq \sum_{k=k_0}^{k_0+m} \sqrt{\frac{2\big(q(x^{l(l(k)-1)})-q(x^{l(k)})\big)}{\delta \gamma_{\min}}}+\tau\sum_{k=k_0}^{k_0+m} \chi\big(q(x^{l(k)})-q(\bar x)\big)\\
& \leq m\sqrt{\frac{2\big(q(x^{l(l(k_0)-1)})-q(x^{l(k_0+m)})\big)}{\delta \gamma_{\min}}}+m\tau\chi\big(q(x^{l(k_0)})-q(\bar x)\big)\\
& \leq m\sqrt{\frac{2\big(q(x^{l(k_0-m-1)})-q(x^{l(k_0+m)})\big)}{\delta \gamma_{\min}}}+m\tau\chi\big(q(x^{l(k_0)})-q(\bar x)\big).
\end{aligned}
\end{equation*}
Hence, we have
\begin{equation*}
\sum_{k=k_0}^{v} \|x^{l(k)}-x^{l(k)-1}\| \leq \frac{4m}{3}\left( \sqrt{\frac{2\big(q(x^{l(k_0-m-1)})-q(x^{l(k_0+m)})\big)}{\delta \gamma_{\min}}}+\tau \chi\big(q(x^{l(k_0)})-q(\bar x)\big) \right).
\end{equation*}
\end{proof}
\begin{theorem}\label{Thmnon:maxGlobConv}
Let \cref{Ass:ProxGradNonMonotone} and \cref{Ass:Cont} 
 hold, and let $ \{ x^k \}_{k\in \mathbb N} $
be any sequence generated by \Cref{Alg:maxNonMonotoneProxGrad}.
Suppose that $ \{ x^k \}_{k\in \K} $ is a subsequence converging to
some limit point $ \bar x $, and that $ q $ has the KL property
at $\bar x$, then the entire sequence $ \{ x^k \}_{k\in \mathbb N} $ converges
to $ \bar x$.
\end{theorem}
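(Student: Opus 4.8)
The plan is to split into two cases according to whether the nonincreasing sequence $\{q(x^{l(k)})\}$ — which converges to $q(\bar x)$ by \cref{Pro:maxprevious} and the continuity of $q$ on $\dom q$ (together with $x^k\to_\K\bar x$ and the standard uniform-continuity argument for max-type line searches) — attains the value $q(\bar x)$ at some index or stays strictly above it. \emph{Case 1: $q(x^{l(\tilde k)})=q(\bar x)$ for some $\tilde k$.} Here I would show the sequence is eventually constant. By monotonicity, $q(x^{l(k)})=q(\bar x)$ for all $k\ge\tilde k$. Put $G:=\{k\ge\tilde k\mid q(x^k)=q(\bar x)\}$. For $k\ge\tilde k+m$ one has $q(x^{l(k)})=\max_{j=0,\dots,m}q(x^{k-j})=q(\bar x)$, and the maximum of finitely many numbers is attained, so $G$ meets every block $\{k-m,\dots,k\}$ and hence is unbounded. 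Moreover, if $k\in G$ and $k-1\ge\tilde k$, the acceptance criterion \eqref{Eq:maxNonStepCrit} at iteration $k-1$ gives $q(\bar x)=q(x^k)\le q(x^{l(k-1)})-\delta\frac{\gamma_{\min}}{2}\|x^k-x^{k-1}\|^2=q(\bar x)-\delta\frac{\gamma_{\min}}{2}\|x^k-x^{k-1}\|^2$, forcing $x^k=x^{k-1}$ and hence $k-1\in G$. Thus $G$ is downward closed above $\tilde k$ and unbounded, so $G=\{\tilde k,\tilde k+1,\dots\}$ and $x^k=x^{k-1}$ for all $k>\tilde k$; the eventually constant sequence must equal $\bar x$, since $\{x^k\}_{k\in\K}$ converges to $\bar x$.

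\emph{Case 2: $q(x^{l(k)})>q(\bar x)$ for all $k$.} Now \cref{Thmnon:maxauxConv} is available. Fix $k_0\in\K\cap\overline K$ and the radius $\alpha<\frac12$ from \cref{LemNon:maxalpha-small}, with $k_0$ and $k_0-m-1$ large. The core is to prove by induction that $x^k\in B_\alpha(\bar x)$ for all $k\ge k_0$. The base case is immediate from $\alpha\ge\|x^{k_0}-\bar x\|$. For the inductive step, assuming $x^k\in B_\alpha(\bar x)$ for $k_0\le k\le v$, I would estimate $\|x^{v+1}-\bar x\|\le\|x^{k_0}-\bar x\|+\sum_{i=k_0}^{v}\|x^{i+1}-x^i\|$ and bound the step sum so that the right-hand side is at most $\alpha$.

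To bound $\sum_{i=k_0}^{v}\|x^{i+1}-x^i\|$ I would split $\{k_0,\dots,v\}$ into $K$ and $\overline K$. For $i\in\overline K$, combining \eqref{Eq:maxNonStepCrit} with the defining inequality of $\overline K$ from \eqref{Eq:definition for a} gives $\frac{\delta\gamma_{\min}-\mu}{2}\|x^{i+1}-x^i\|^2\le q(x^{l(i)})-q(x^{l(i+1)})$; for $i\in K$, the defining inequality of $K$ together with the fact that $x^{i+1}$ lies in the windows of $l(i+1),\dots,l(i+m+1)$ gives $\frac{\mu}{2}\|x^{i+1}-x^i\|^2\le q(x^{l(i+1)})-q(x^{i+1})\le q(x^{l(i+1)})-q(x^{l(i+m+1)})=\sum_{j=i+1}^{i+m}\big(q(x^{l(j)})-q(x^{l(j+1)})\big)$. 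In both cases, the concavity of $\chi$ together with \cref{LemNon:maxDistanceSubgrad} applied at $x^{l(j)-1}$ (whose distance to $\bar x$ is at most $\alpha+(m+1)\eta<\rho$ by \eqref{Eq:maxeta} and \eqref{Eq:maxrho}) bounds each difference $q(x^{l(j)})-q(x^{l(j+1)})$ by $(\bar\gamma_\rho+L_\rho)\|x^{l(j)}-x^{l(j)-1}\|\big(\chi(q(x^{l(j)})-q(\bar x))-\chi(q(x^{l(j+1)})-q(\bar x))\big)$. Applying $\sqrt{ab}\le\frac14 a+b$, summing, telescoping the $\chi$-terms, using \cref{Prop:Cons} where a shift between $x^{l(\cdot)}$ and neighbouring iterates must be absorbed, and invoking \cref{Thmnon:maxauxConv} to control $\sum_{k=k_0}^{v}\|x^{l(k)}-x^{l(k)-1}\|$ by the expression built from $q(x^{l(k_0-m-1)})-q(\bar x)$ and $\chi(q(x^{l(k_0)})-q(\bar x))$, the step sum is bounded by precisely the quantity that, added to $\|x^{k_0}-\bar x\|$, equals $\alpha$ from \eqref{EqNon:maxalpha}. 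This closes the induction; the same estimate, being uniform in $v$, shows $\sum_{k\ge k_0}\|x^{k+1}-x^k\|<\infty$, so $\{x^k\}_{k\in\mathbb N}$ is Cauchy, hence convergent, and since $\bar x$ is an accumulation point it converges to $\bar x$.

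The main obstacle is the inductive step in Case 2: one must carefully bookkeep the separate contributions of $K$ and $\overline K$, the index shifts of up to $m+1$ produced by the moving maximum (so that \cref{Prop:Cons}, \cref{LemNon:maxDistanceSubgrad}, and \cref{Thmnon:maxauxConv} — the last itself proved under the very hypothesis ``$x^k\in B_\alpha(\bar x)$ for $k_0\le k\le v$'' that is being established — can be chained), and all the constants built from $\delta,\gamma_{\min},\mu,\bar\gamma_\rho,L_\rho,m,c_{\hat k}$, so that they collapse into the prescribed bound $\|x^{k_0}-\bar x\|+(\cdots)\le\alpha$ of \cref{LemNon:maxalpha-small}. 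By contrast, the dichotomy, Case 1, and the passage from a uniform length bound to Cauchy convergence are routine.
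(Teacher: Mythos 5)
Your overall architecture is the same as the paper's: the dichotomy on whether $q(x^{l(k)})$ reaches $q(\bar x)$, the eventually-constant argument in the first case (your elementary argument via the set $G$ is correct and is in fact more self-contained than the paper, which simply cites an external lemma), and in the second case an induction on $x^k\in B_\alpha(\bar x)$ with the step sum split over $K$ and $\overline K$, using \cref{Thmnon:maxauxConv} inside the induction exactly as the paper does. Your $\overline K$ estimate $\tfrac{\delta\gamma_{\min}-\mu}{2}\|x^{i+1}-x^i\|^2\le q(x^{l(i)})-q(x^{l(i+1)})$ and its KL/concavity treatment coincide with the paper's \eqref{Eq:fKL1.1} and \eqref{Eq:KL1.1}.

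However, the $K$-branch of your inductive step contains a genuine gap: you claim $q(x^{l(i+1)})-q(x^{i+1})\le q(x^{l(i+1)})-q(x^{l(i+m+1)})$ on the grounds that $x^{i+1}$ lies in the windows defining $l(i+1),\dots,l(i+m+1)$. Window membership gives the \emph{opposite} inequality: since $l(i+m+1)$ maximizes $q$ over $\{i+1,\dots,i+m+1\}$, one has $q(x^{l(i+m+1)})\ge q(x^{i+1})$, hence $q(x^{l(i+1)})-q(x^{i+1})\ge q(x^{l(i+1)})-q(x^{l(i+m+1)})$. So for $i\in K$ the quantity $\tfrac{\mu}{2}\|x^{i+1}-x^i\|^2$ is not controlled by a telescoping of the differences $q(x^{l(j)})-q(x^{l(j+1)})$, and the obvious repair (bounding by $q(x^{l(i+1)})-q(\bar x)$) does not produce a summable series for a general desingularization function. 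This is precisely where the paper inserts a separate argument: $x^{l(i+1)}$ solves subproblem \eqref{Eq:maxNonSubki} at iteration $l(i+1)-1$ with $x^{i+1}$ as a competitor, and combining this with the descent lemma on $C_\rho$ and the convexity of $\tfrac12\|\cdot-x^{l(i+1)-1}\|^2$ yields $q(x^{l(i+1)})-q(x^{i+1})\le \bigl(L_\rho+\tfrac{\bar\gamma_\rho}{2}\bigr)\|x^{l(i+1)}-x^{i+1}\|^2+\tfrac{\bar\gamma_\rho}{2}\|x^{l(i+1)}-x^{l(i+1)-1}\|^2$; then \cref{Prop:Cons} absorbs $\|x^{l(i+1)}-x^{i+1}\|$ into $c_{\hat k}\|x^{l(i+1)}-x^{l(i+1)-1}\|$, giving $\|x^{i+1}-x^i\|\le \sqrt{2}\,(c_{\hat k}+1)\sqrt{(\bar\gamma_\rho+L_\rho)/\mu}\;\|x^{l(i+1)}-x^{l(i+1)-1}\|$, which is finally summed via \cref{Thmnon:maxauxConv}. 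Your passing remark that a shift "must be absorbed via \cref{Prop:Cons}" does not supply this estimate, so as written the induction step for indices in $K$ cannot be closed and the constant in \eqref{EqNon:maxalpha} is not reproduced.
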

\begin{proof}
By \cref{Pro:maxprevious}~\ref{item:maxdecreasingPhi}, one knows that the whole sequence $\{q(x^{l(k)})\}$ is monotonically decreasing and convergent to $q(\bar x)$ by assumption. It implies that $q(x^{l(k)}) \geq q(\bar x)$ for all $k\in \mathbb N$. If $q(x^{l(\bar k)})=q(\bar x)$ holds for some index $\bar k\in \mathbb N$, which, by monotonicity, implies that $q(x^{l(\bar k+1)})=q(\bar x)$, then we claim that $x^{k+1}=x^k$ for all $k \geq \bar k+m_{\bar k}$ ($m_{\bar k}=m$ if $\bar k$ is sufficiently large) by \cite[Lemma~3.2~(iv)]{doi:10.1137/22M1469663}.
Since the subsequence $\{x^k\}_{k\in \K}$ is assumed to converge to $\bar x$, this implies that $x^k=\bar x$ for all $k \in \mathbb N$ sufficiently large.  Then the entire (eventually constant) sequence $\{x^k\}_{k\in \mathbb N}$ is convergent to $\bar x$ in this situation.\\
\indent It remains to consider the case where $q(x^{l(k)})>q(\bar x)$ for all $k\in \mathbb N$.  Recall the analysis in \cref{Thmnon:maxauxConv}, let us assume again that $k_0 \geq \hat k$
  \ and $k_0\in K$ is sufficiently large to satisfy
\begin{equation}\label{Eq:Tmaxk0}
q(x^{l(k_0)})<q(\bar x)+\eta.
\end{equation}
We now claim that the following two statements hold for all $k\geq k_0$:
\begin{enumerate}[(a)]
	\item \label{Itemnon:maxInd-1}
	   $ x^k \in B_{\alpha} (\bar x) $, 
	\item \label{Itemnon:maxInd-2}
	  $ \| x^{k_0} - \bar x \| + \sum_{i=k_0}^k \| x^{i+1} - x^i \| \leq \alpha $, which is equivalent to
	  \begin{equation}\label{Eqnon:maxInd-2}
\begin{aligned}
	  	& \sum_{i=k_0}^k \| x^{i+1} - x^i \| \\
& \leq \left( \frac{8\sqrt{2}m\left(c_{\hat k}+1\right)(\bar \gamma_{\rho}+L_{\rho})}{3\delta \gamma_{\min}}\sqrt{\frac{\bar \gamma_{\rho}+L_{\rho}}{\mu}}+\frac{m}{3}\sqrt{\frac{2(\bar \gamma_{\rho}+L_{\rho})}{\delta \gamma_{\min}-\mu}}\left(\frac{3}{m}+\frac{2(\bar \gamma_{\rho}+L_{\rho})}{\delta \gamma_{\min}}\right)\right)\\
&~~~~\cdot \chi\big(q(x^{l(k_0)})-q(\bar x)\big)\\
&~~~~+ \left(\frac{4\sqrt{2}m\left(c_{\hat k}+1\right)}{3}\sqrt{\frac{\bar \gamma_{\rho}+L_{\rho}}{\mu}}+\frac{m}{3}\sqrt{\frac{2(\bar \gamma_{\rho}+L_{\rho})}{\delta \gamma_{\min}-\mu}}+\sqrt{\frac{\delta \gamma_{\min}}{\delta \gamma_{\min}-\mu}}\right)\\
&~~~~\cdot\sqrt{\frac{2\big(q(x^{l(k_0-m-1)})-q(\bar x)\big)}{\delta \gamma_{\min}}}\\
\end{aligned}
	  \end{equation}
\end{enumerate}
where $\alpha$ is defined in \eqref{EqNon:maxalpha}.
We still verify these two statements jointly by induction. For $k=k_0$, statement \ref{Itemnon:maxInd-1} holds by the definition of $\alpha$ in \eqref{EqNon:maxalpha}. Meanwhile,  due to $k_0\in \overline K$,  \eqref{Eq:definition for a} and \eqref{Eq:maxNonSubki} deduce that
\begin{equation*}
\begin{aligned}
\frac{\delta \gamma_{\min}-\mu}{2}\|x^{k_0+1}-x^{k_0}\|^2& \leq q(x^{l(k_0)})-q(x^{k_0+1})+q(x^{k_0+1})-q(x^{l(k_0+1)})\\
&=q(x^{l(k_0)})-q(x^{l(k_0+1)})\leq q(x^{l(k_0)})-q(\bar x)
\end{aligned}
\end{equation*}
which says that \eqref{Eqnon:maxInd-2} also holds for $k=k_0$.
Suppose that both statements are valid for all $k\geq k_0$. Using the triangle inequality, the induction hypothesis, and the definition of $\alpha$, we have
\begin{equation*}
\|x^{k+1}-\bar x \| \leq \sum_{i=k_0}^{k}\|x^{i+1}-x^i\|+\|x^{k_0}-\bar x\|\leq \alpha,
\end{equation*}
i.e., statement \ref{Itemnon:maxInd-1} holds for $k+1$ in place of $k$.  Therefore, we have $x^i \in B_{\alpha}(\bar x)$ for all $k_0 \leq i \leq k+1$ by induction. The verification of the induction step for \eqref{Eqnon:maxInd-2} is more involved.  We next consider two cases whether or not $k\in K$ defined in \eqref{Eq:definition for a} for all $k \geq k_0$.\\
\textbf{Case 1: $k \in \overline K$.}
For all $k$ satisfying $k \geq k_0$ and $k\in \overline K$, then one has 
\begin{equation}\label{Eq:case1.1}
0\leq q(x^{l(k+1)})-q(x^{k+1}) \leq \frac{\mu}{2}\|x^{k+1}-x^k\|^2.
\end{equation}
Since KL property holds at $\bar x$,  then \eqref{Eq:KLineq} with $m=0$ is also valid, i.e.,
\begin{equation}\label{Eq:KL1.1}
q(x^{l(i)})-q(x^{l(i+1)})
 \leq \left(\chi\big(q(x^{l(i)})-q(\bar x)\big)-\chi\big(q(x^{l(i+1)})-q(\bar x)\big)\right)(\bar \gamma_{\rho}+L_{\rho})\|x^{l(i)}-x^{l(i)-1}\|
\end{equation}
for all $ k_0 \leq i \leq k+1 $ and $i \in \overline K$.
\eqref{Eq:maxNonSubki} and \eqref{Eq:case1.1} imply that
\begin{equation}\label{Eq:fKL1.1}
\frac{\delta \gamma_{k}-\mu}{2}\|x^{i+1}-x^i\|^2 \leq q(x^{l(i)})-q(x^{i+1})+q(x^{i+1})-q(x^{l(i+1)})=q(x^{l(i)})-q(x^{l(i+1)}).
\end{equation}
Define $$\Delta_{i,j}:=\chi\big(q(x^{l(i)})-q(\bar x)\big)-\chi\big(q(x^{l(j)})-q(\bar x)\big)$$
for short. Then, \eqref{Eq:KL1.1} and \eqref{Eq:fKL1.1} yield that
\begin{equation*}
\begin{aligned}
\frac{\delta \gamma_{\min}-\mu}{2}\|x^{i+1}-x^i\|^2
\leq  (\bar \gamma_{\rho}+L_{\rho})\Delta_{i,i+1}\|x^{l(i)}-x^{l(i)-1}\|
\end{aligned}
\end{equation*}
for all $ k_0 \leq i \leq k+1 $ and $i \in \overline K$.  It implies from the inequality $\sqrt{ab} \leq \frac{1}{4} a+b$ for any $a \geq 0$ and $b \geq 0$ again that
\begin{equation*}
\begin{aligned}
\|x^{i+1}-x^i\| \leq \sqrt{\frac{2(\bar \gamma_{\rho}+L_{\rho})}{\delta \gamma_{\min}-\mu}}\sqrt{\Delta_{i,i+1}\|x^{l(i)}-x^{l(i)-1}\|}
 \leq \hat \tau \left(\Delta_{i,i+1}+\frac{1}{4}\|x^{l(i)}-x^{l(i)-1}\|\right)
\end{aligned}
\end{equation*}
for all $ k_0 \leq i \leq k+1 $ and $i \in \overline K$ with $\hat \tau:=\sqrt{\frac{2(\bar \gamma_{\rho}+L_{\rho})}{\delta \gamma_{\min}-\mu}}$.  Recall that $x^{i}\in B_{\alpha}(\bar x)$ for all $ k_0 \leq i \leq k+1$ by induction hypothesis and hence \cref{Thmnon:maxauxConv} holds with $v=k+1$.
Summation implies that
\begin{equation}\label{Eq:SumforOverlineK}
\begin{aligned}
&\sum^{k+1}_{k_0 = i \in \overline K} \leq \hat \tau \sum_{k_0 = i \in \overline K}^{k+1}\left(\Delta_{i,i+1}+\frac{1}{4}\|x^{l(i)}-x^{l(i)-1}\| \right)
 \leq \hat \tau \sum_{i = k_0}^{k+1}\left(\Delta_{i,i+1}+\frac{1}{4}\|x^{l(i)}-x^{l(i)-1}\| \right)\\
& \leq \hat \tau \chi\big(q(x^{l(k_0)})-q(\bar x)\big)+\hat \tau\frac{1}{4} \sum_{i = k_0}^{k+1}\|x^{l(i)}-x^{l(i)-1}\|\\
&\leq \hat \tau \chi\big(q(x^{l(k_0)})-q(\bar x)\big)\\
&~~~~+\frac{\hat \tau m}{3}\left( \sqrt{\frac{2\big(q(x^{l(k_0-m-1)})-q(\bar x)\big)}{\delta \gamma_{\min}}}+\frac{2(\bar \gamma_{\rho}+L_{\rho})}{\delta \gamma_{\min}}\chi\big(q(x^{l(k_0)})-q(\bar x)\big) \right) \\
& =\frac{m}{3}\sqrt{\frac{2(\bar \gamma_{\rho}+L_{\rho})}{\delta \gamma_{\min}-\mu}}\left(\sqrt{\frac{2\big(q(x^{l(k_0-m-1)})-q(\bar x)\big)}{\delta \gamma_{\min}}}+\left(\frac{3}{m}+\frac{2(\bar \gamma_{\rho}+L_{\rho})}{\delta \gamma_{\min}}\right)\chi\big(q(x^{l(k_0)})-q(\bar x)\big)\right).
\end{aligned}
\end{equation}
\textbf{Case 2: $k\in K$.}
For sufficiently large $k_0$, from \eqref{Eq:maxeta}, one has
\begin{equation}\label{Eq:distance}
\|x^{l(i+1)}-x^{i+1}\| \leq \sum_{j=i+1-m}^{i+1} \|x^{j+1}-x^j\| \leq m\eta, \quad \forall i \geq k_0,
\end{equation}
which means that $\|x^{l(i+1)}-x^{i+1}\| \to 0$.  By \eqref{Eq:maxNonSubki}, for each $i \in \mathbb N$, one has
\begin{equation*}
\begin{aligned}
&\left< \nabla f(x^{l(i+1)-1}), x^{l(i+1)}-x^{l(i+1)-1}\right> +\frac{\gamma_{l(i+1)-1}}{2}\|x^{l(i+1)}-x^{l(i+1)-1}\|^2+g(x^{l(i+1)})\\
&\leq \left< \nabla f(x^{l(i+1)-1}), x^{i+1}-x^{l(i+1)-1}\right> +\frac{\gamma_{l(i+1)-1}}{2}\|x^{i+1}-x^{l(i+1)-1}\|^2+g(x^{i+1}),
\end{aligned}
\end{equation*}
which implies that
\begin{equation}\label{Eq:sub3}
\begin{aligned}
q(x^{l(i+1)})-q(x^{i+1})&\leq \left< \nabla f(x^{l(i+1)-1}),x^{i+1}-x^{l(i+1)}\right>+f(x^{l(i+1)})-f(x^{i+1})\\
&~~~~+\frac{\gamma_{l(i+1)-1}}{2}\left(\|x^{i+1}-x^{l(i+1)-1}\|^2-\|x^{l(i+1)}-x^{l(i+1)-1}\|^2\right).
\end{aligned}
\end{equation}
Recall again that $x^i \in B_{\alpha}(\bar x)$ for all $k_0 \leq i \leq k+1$, then $x^{i+1} \in C_{\rho}$ holds and from 
\eqref{Eq:distance}, we have
\begin{equation*}
\begin{aligned}
\|x^{l(i+1)}-\bar x\| & \leq \| x^{l(i+1)}-x^{i+1}\|+\|x^{i+1}-x^i\|+ \|x^i-\bar x\|\\
& \leq (m+1)\eta+\alpha \leq \rho \quad  \forall k_0 \leq i \leq k+1,
\end{aligned}
\end{equation*}
and consequently
\begin{equation*}
\|x^{l(i+1)-1}-\bar x\| \leq \|x^{l(i+1)-1}-x^{l(i+1)}\|+\| x^{l(i+1)}-\bar x\|
\leq \eta+(m+1)\eta+\alpha<2\rho
\end{equation*}
for all $k_0 \leq i \leq k+1$.  Therefore,   for all $k_0 \leq i\leq k+1$, we have $\gamma_{l(i+1)-1} \leq \bar \gamma_{\rho}$ from \cref{Lem:maxboundedgamma},  and $x^i, x^{i+1}, x^{l(i+1)-1}, x^{l(i+1)} \in C_{\rho}$.  Then the descent lemma implies
\begin{equation}\label{Eq:dl}
f(x^{l(i+1)})-f(x^{i+1})\leq \frac{L_{\rho}}{2}\|x^{l(i+1)}-x^{i+1}\|^2+\left<\nabla f(x^{i+1}), x^{l(i+1)}-x^{i+1}\right>.
\end{equation}
Meanwhile, from the fact that $\frac{1}{2}\|\cdot-x^{l(i+1)-1}\|^2$ is convex, we have
\begin{equation}\label{Eq:convex}
\begin{aligned}
\frac{1}{2}\|x^{i+1}-x^{l(i+1)-1}\|^2-\frac{1}{2}\|x^{l(i+1)}-x^{l(i+1)-1}\|^2\leq \left<x^{i+1}-x^{l(i+1)-1},x^{i+1}-x^{l(i+1)}\right>
\end{aligned}
\end{equation}
Putting \eqref{Eq:sub3}, \eqref{Eq:dl}, and \eqref{Eq:convex} together yields that
\begin{equation}
\begin{aligned}
q(x^{l(i+1)})-q(x^{i+1})&\leq\left<\nabla f(x^{l(i+1)-1})-\nabla f(x^{i+1})+\gamma_{l(i+1)-1}\left(x^{i+1}-x^{l(i+1)-1}\right), x^{i+1}-x^{l(i+1)}\right>\\
&~~~~+\frac{L_{\rho}}{2}\|x^{l(i+1)}-x^{i+1}\|^2\\
& \leq\frac{L_{\rho}}{2}\|x^{l(i+1)}-x^{i+1}\|^2+\left(\bar \gamma_{\rho}+L_{\rho}\right)\|x^{i+1}-x^{l(i+1)-1}\|\|x^{i+1}-x^{l(i+1)}\|\\
& \leq \left(L_{\rho}+\frac{\bar \gamma_{\rho}}{2}\right)\|x^{l(i+1)}-x^{i+1}\|^2+\frac{\bar \gamma_{\rho}}{2}\|x^{l(i+1)}-x^{l(i+1)-1}\|^2,
\end{aligned}
\end{equation}
therefore, by the inequality $\sqrt{a+b}\leq \sqrt{a}+\sqrt{b}$ for all $a\geq 0$ and $b \geq 0$,  as well as \cref{Prop:Cons}, one has
\begin{equation}
\begin{aligned}
\sqrt{q(x^{l(i+1)})-q(x^{i+1})}&\leq \sqrt{L_{\rho}+\frac{\bar \gamma_{\rho}}{2}}\|x^{l(i+1)}-x^{i+1}\|+\sqrt{\frac{\bar \gamma_{\rho}}{2}}\|x^{l(i+1)}-x^{l(i+1)-1}\|\\
&\leq \left(c_{\hat k}\sqrt{L_{\rho}+\frac{\bar \gamma_{\rho}}{2}}+\sqrt{\frac{\bar \gamma_{\rho}}{2}}\right)\|x^{l(i+1)}-x^{l(i+1)-1}\|
\end{aligned}
\end{equation}
for all $k_0 \leq i \leq k+1$.
Then, we have
\begin{equation*}
\|x^{i+1}-x^i\| < \sqrt{\frac{2}{\mu}}\sqrt{q(x^{l(i+1)})-q(x^{i+1})} \leq \left( c_{\hat k}\sqrt{\frac{2L_{\rho}+\bar \gamma_{\rho}}{\mu}}+\sqrt{\frac{\bar \gamma_{\rho}}{\mu}}\right)\|x^{l(i+1)-1}-x^{l(i+1)}\|
\end{equation*}
for all $ k_0 \leq i \leq k+1$ and $i\in K$, which definitely says that
\begin{equation*}
\|x^{i+1}-x^i\| \leq \sqrt{2}\left(c_{\hat k}+1\right) \sqrt{\frac{\bar \gamma_{\rho}+L_{\rho}}{\mu}}\|x^{l(i+1)-1}-x^{l(i+1)}\| \quad \forall k_0 \leq i \leq k+1 \ \text{and} \ i \in K.
\end{equation*}
 We know that the induction hypothesis yields \cref{Thmnon:maxauxConv} holds automatically for $v=k+1$,  hence,  we have
\begin{equation}\label{Eq:sumK}
\begin{aligned}
&\sum_{k_0 = i \in K}^{k+1}\|x^{i+1}-x^i\| \leq   \sqrt{2}\left(c_{\hat k}+1\right) \sqrt{\frac{\bar \gamma_{\rho}+L_{\rho}}{\mu}}\sum_{k_0 = i \in K}^{k+1}\|x^{l(i+1)-1}-x^{l(i+1)}\|\\
 &\leq   \sqrt{2}\left(c_{\hat k}+1\right) \sqrt{\frac{\bar \gamma_{\rho}+L_{\rho}}{\mu}} \sum_{ i=k_0 }^{k+1}\|x^{l(i+1)-1}-x^{l(i+1)}\| \\
&\leq \frac{4\sqrt{2}m\left(c_{\hat k}+1\right)}{3} \sqrt{\frac{\bar \gamma_{\rho}+L_{\rho}}{\mu}}\left( \sqrt{\frac{2\big(q(x^{l(k_0-m-1)})-q(\bar x)\big)}{\delta \gamma_{\min}}}+\frac{2(\bar \gamma_{\rho}+L_{\rho})}{\delta \gamma_{\min}}\chi\big(q(x^{l(k_0)})-q(\bar x)\big) \right).
\end{aligned}
\end{equation}
By \eqref{Eq:SumforOverlineK} and \eqref{Eq:sumK}, we have
\begin{equation*}
\begin{aligned}
&\sum_{i=k_0 }^{k+1}\|x^{i+1}-x^i\| \leq \sum_{k_0 = i \in K}^{k+1}\|x^{i+1}-x^i\|+\sum_{k_0 = i \in \overline K}^{k+1}\|x^{i+1}-x^i\|\\
& \leq \left( \frac{8\sqrt{2}m\left(c_{\hat k}+1\right)(\bar \gamma_{\rho}+L_{\rho})}{3\delta \gamma_{\min}}\sqrt{\frac{\bar \gamma_{\rho}+L_{\rho}}{\mu}}+\frac{m}{3}\sqrt{\frac{2(\bar \gamma_{\rho}+L_{\rho})}{\delta \gamma_{\min}-\mu}}\left(\frac{3}{m}+\frac{2(\bar \gamma_{\rho}+L_{\rho})}{\delta \gamma_{\min}}\right)\right)\\
&~~~~\cdot \chi\big(q(x^{l(k_0)})-q(\bar x)\big)\\
&~~~~+ \left(\frac{4\sqrt{2}m\left(c_{\hat k}+1\right)}{3}\sqrt{\frac{\bar \gamma_{\rho}+L_{\rho}}{\mu}}+\frac{m}{3}\sqrt{\frac{2(\bar \gamma_{\rho}+L_{\rho})}{\delta \gamma_{\min}-\mu}}\right)\sqrt{\frac{2\big(q(x^{l(k_0-m-1)})-q(\bar x)\big)}{\delta \gamma_{\min}}}\\
& \leq \left( \frac{8\sqrt{2}m\left(c_{\hat k}+1\right)(\bar \gamma_{\rho}+L_{\rho})}{3\delta \gamma_{\min}}\sqrt{\frac{\bar \gamma_{\rho}+L_{\rho}}{\mu}}+\frac{m}{3}\sqrt{\frac{2(\bar \gamma_{\rho}+L_{\rho})}{\delta \gamma_{\min}-\mu}}\left(\frac{3}{m}+\frac{2(\bar \gamma_{\rho}+L_{\rho})}{\delta \gamma_{\min}}\right)\right)\\
&~~~~\cdot \chi\big(q(x^{l(k_0)})-q(\bar x)\big)\\
&~~~~+ \left(\frac{4\sqrt{2}m\left(c_{\hat k}+1\right)}{3}\sqrt{\frac{\bar \gamma_{\rho}+L_{\rho}}{\mu}}+\frac{m}{3}\sqrt{\frac{2(\bar \gamma_{\rho}+L_{\rho})}{\delta \gamma_{\min}-\mu}}+\sqrt{\frac{\delta \gamma_{\min}}{\delta \gamma_{\min}-\mu}}\right)\\
&~~~~\cdot\sqrt{\frac{2\big(q(x^{l(k_0-m-1)})-q(\bar x)\big)}{\delta \gamma_{\min}}}\\
\end{aligned}
\end{equation*}
Hence, statement \ref{Itemnon:maxInd-2} holds for $k+1$, and this completes the induction.

In particular, it follows from \ref{Itemnon:maxInd-1} that $x^k\in B_{\alpha}(\bar x)$ for all $k \geq k_0$. Taking $k \to \infty$ in \eqref{Eqnon:maxInd-2} therefore shows that $\{x^k\}$ is a Cauthy sequence and, thus, convergent. Since we already know that $\bar x$ is an accumulation point, it follows that the entire sequence $\{x^k\}_{k\in \mathbb N}$ is convergent to $\bar x$.
\end{proof}
Note that if $\overline K=\mathbb N$ or $m=0$, then we have $q(x^k)>q(x^{k+1})$ for all $k\in \mathbb N$, i.e., $\{q(x^k)\}_{k\in \mathbb N}$ is decreasing, then \Cref{Alg:maxNonMonotoneProxGrad} degenerates into \cite[Algorithm~3.1]{jia2023convergence}, where the specific results about the convergence and convergence rate of the whole sequence were proposed. 
In the following,  we illustrate the  convergence rate of \Cref{Alg:maxNonMonotoneProxGrad} where $\overline K\neq \mathbb N$.
\begin{theorem}\label{Thm:maxRate-of-Conv}
Let \cref{Ass:ProxGradNonMonotone} and \cref{Ass:Cont} hold, and let $ \{ x^k \}_{k\in \mathbb N} $
be any sequence generated by \Cref{Alg:maxNonMonotoneProxGrad}.
Suppose that $ \{ x^k \}_{k\in \K} $ is a subsequence converging to
some limit point $ \bar x $, and that $ q $ has the KL property
at $ \bar x $. Then the entire sequence $ \{ x^k \}_{k\in \mathbb N} $ converges
to $ \bar x $, and if the corresponding desingularization function
has the form $ \chi (t) = c t^{\theta} $ for some $ c > 0 $ and $\theta \in (0,1]$, then
the following statements hold:
\begin{enumerate}[(i)]
          \item \label{conv rate equal 1} if $\theta=1$, then the sequences $\{q(x^{l(k)})\}_{k\in \mathbb N}$ and $\{x^k\}_{k\in \mathbb N}$ converge in a finite numer of steps to $q(\bar x)$ and $\bar x$, respectively.
         \item \label{conv rate les 0.5} if $\theta\in [\frac{1}{2}, 1)$,  then the sequence $\{q(x^{l(k)})\}_{k\in \mathbb N}$ converges Q-linearly to $q(\bar x)$, and the sequence $\{x^k\}_{k\in \mathbb N}$ converges R-linearly to $\bar x$.
        \item \label{conv rate more 0.5} if $\theta\in (0, \frac{1}{2})$, the there exist some positive constants $\eta_1$ and $\eta_2$ such that
           \begin{align*}
           q(x^{l(k)})-q(\bar x) &\leq  \eta_1 k^{-\frac{1}{1-2\theta}}\\
          \|x^k-\bar x\| &\leq  \eta_2 k^{-\frac{\theta}{1-2\theta}}
             \end{align*}
           for sufficiently large $k$.
\end{enumerate}
\end{theorem}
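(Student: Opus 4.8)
The strategy is to compress the entire argument into a single scalar recursion for the shifted maximal values $b_k := q(x^{l(k)}) - q(\bar x)$ and then invoke the by-now-standard analysis of Kurdyka--{\L}ojasiewicz recursions; the rate for the iterates is then read off from the length estimate already contained in the proof of \cref{Thmnon:maxGlobConv}. First I would dispose of the trivial situation: if $q(x^{l(\bar k)}) = q(\bar x)$ for some $\bar k\in\mathbb N$, then, by the argument used at the beginning of the proof of \cref{Thmnon:maxGlobConv}, the sequence $\{x^k\}_{k\in\mathbb N}$ is eventually constant and equal to $\bar x$, so all three assertions hold with finite termination. Hence from now on assume $b_k>0$ for every $k\in\mathbb N$; by \cref{Pro:maxprevious}~\ref{item:maxdecreasingPhi} and \cref{Thmnon:maxGlobConv} the sequence $\{b_k\}$ is positive, nonincreasing and tends to $0$, and $x^k\to\bar x$, so for all sufficiently large $k$ the points $x^{l(k)}$, $x^{l(k)-1}$ (and the analogous points obtained by replacing $k$ with $k+m+1$) lie in $B_\rho(\bar x)$, while $q(\bar x)<q(x^{l(k)})<q(\bar x)+\eta$ as in \eqref{Eq:maxk0}; this is precisely the neighbourhood bookkeeping already set up in \eqref{Eq:maxeta}--\eqref{Eq:maxgamma_rho}.

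Writing the desingularization function as $\chi(t)=ct^\theta$, so that $\chi'(t)=c\theta\,t^{\theta-1}$, the KL inequality at $x^{l(k)}$ yields $\dist\big(0,\partial q(x^{l(k)})\big)\ge \frac{1}{c\theta}\,b_k^{\,1-\theta}$, whereas \cref{LemNon:maxDistanceSubgrad}, applied with $l(k)$ in the role of ``$k+1$'' (legitimate since $x^{l(k)-1}\in B_\rho(\bar x)$ for large $k$), yields $\dist\big(0,\partial q(x^{l(k)})\big)\le (\bar\gamma_\rho+L_\rho)\,\|x^{l(k)}-x^{l(k)-1}\|$. Combining the two there is a constant $C_1>0$ with $\|x^{l(k)}-x^{l(k)-1}\|\ge C_1\,b_k^{\,1-\theta}$ for all large $k$. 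On the other hand, the acceptance criterion \eqref{Eq:maxNonStepCrit} at iteration $l(k+m+1)-1$, combined with the monotonicity of $\{q(x^{l(i)})\}_i$ and the inequality $l(k+m+1)-1\ge k$, gives $\frac{\delta\gamma_{\min}}{2}\,\|x^{l(k+m+1)}-x^{l(k+m+1)-1}\|^2\le b_k-b_{k+m+1}$, which is exactly \eqref{Eq:preplusKL}. Applying the first estimate with $k$ replaced by $k+m+1$ and inserting it into the second produces the master recursion
\begin{equation*}
 b_k-b_{k+m+1}\ \ge\ C\,b_{k+m+1}^{\,2(1-\theta)}\qquad\text{for all sufficiently large }k,
\end{equation*}
with a constant $C>0$ depending only on $c,\theta,\delta,\gamma_{\min},\bar\gamma_\rho,L_\rho$. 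I expect this to be the main obstacle, and it is the only place where the max-line-search structure genuinely enters: one must shift by $m+1$ so that \eqref{Eq:preplusKL} compares $q(x^{l(l(k+m+1)-1)})$ with $q(x^{l(k)})$ rather than with a later maximal value, and one must verify that every iterate fed into \cref{LemNon:maxDistanceSubgrad} lies in $B_\rho(\bar x)$, which follows by combining the one-step bound \eqref{Eq:maxeta} with $x^k\to\bar x$ (the same radius computation as in the proof of \cref{Thmnon:maxauxConv}).

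From the master recursion the three regimes follow by the standard analysis, carried out separately on each residue class modulo $m+1$ and then recombined using the monotonicity of $\{b_k\}$. If $\theta=1$, then $2(1-\theta)=0$ and the recursion reads $b_k-b_{k+m+1}\ge C$ for all large $k$; summing along an arithmetic progression of step $m+1$ contradicts $b_k\to 0$ unless $b_k=0$ eventually, which puts us back in the trivial case and yields finite termination of $\{q(x^{l(k)})\}$ and of $\{x^k\}$. If $\theta\in[\tfrac12,1)$, then $2(1-\theta)\le 1$ and, since $b_{k+m+1}\le 1$ eventually, $b_k-b_{k+m+1}\ge C\,b_{k+m+1}$, i.e.\ $b_{k+m+1}\le (1+C)^{-1}b_k$; iterating along residue classes and using monotonicity gives $b_k\le A\sigma^k$ with $\sigma:=(1+C)^{-1/(m+1)}\in(0,1)$, which is the asserted linear rate for $\{q(x^{l(k)})\}$. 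If $\theta\in(0,\tfrac12)$, then $2(1-\theta)>1$ and the elementary lemma on recursions of the form $b_k-b_{k+m+1}\ge C\,b_{k+m+1}^{\,p}$ with $p>1$ (applied on each residue class, then filled in by monotonicity) gives $b_k=O\big(k^{-1/(p-1)}\big)=O\big(k^{-1/(1-2\theta)}\big)$, i.e.\ $q(x^{l(k)})-q(\bar x)\le \eta_1\,k^{-1/(1-2\theta)}$ for large $k$.

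It remains to transfer these rates to the iterates, for which I would re-run the length estimate \eqref{Eqnon:maxInd-2} from a general large starting index (exactly as the iterate rate is obtained from the function-value rate in the monotone case, cf.\ \cite{jia2023convergence,jia2023augmented}): since $x^k\to\bar x$, one has $\|x^k-\bar x\|\le\sum_{i\ge k}\|x^{i+1}-x^i\|$, and the right-hand side is bounded by a constant times $\chi(b_{k-m-1})+\sqrt{b_{k-m-1}}=c\,b_{k-m-1}^{\,\theta}+\sqrt{b_{k-m-1}}$. Inserting the decay of $\{b_k\}$ just established: if $\theta\in[\tfrac12,1)$ both terms decay geometrically, so $\|x^k-\bar x\|$ is dominated by a geometric null sequence and hence $\{x^k\}$ converges R-linearly; if $\theta\in(0,\tfrac12)$ one obtains $\|x^k-\bar x\|\le C_2\big(k^{-\theta/(1-2\theta)}+k^{-1/(2(1-2\theta))}\big)$, and since $\theta<\tfrac12$ implies $\theta/(1-2\theta)<1/(2(1-2\theta))$, the first term dominates, so $\|x^k-\bar x\|\le \eta_2\,k^{-\theta/(1-2\theta)}$ for large $k$, as claimed.
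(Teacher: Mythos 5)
Your proposal follows essentially the same route as the paper's proof: both combine the KL inequality at $x^{l(k)}$ with \cref{LemNon:maxDistanceSubgrad} (this is exactly \eqref{Eq:KLlast}) and the sufficient-decrease relation \eqref{Eq:preplusKL} to obtain a scalar recursion for $b_k:=q(x^{l(k)})-q(\bar x)$, and both transfer the functional rates to the iterates by re-running the finite-length estimate \eqref{Eqnon:maxInd-2} from a general large starting index, which is precisely how the paper treats the cases $\theta\in(0,\tfrac12)$ and $\theta\in[\tfrac12,1)$. The difference is in how the recursion is processed: the paper passes to a shift-one inequality \eqref{Eq:Relq} and then cites \cite[Lemma~2.14]{jia2023augmented}, whereas you keep the natural $(m+1)$-shift coming from \eqref{Eq:preplusKL} and analyze $b_k-b_{k+m+1}\ge C\,b_{k+m+1}^{2(1-\theta)}$ elementarily along residue classes modulo $m+1$. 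Your handling of the shift is in fact the more careful one: deducing the shift-one version requires $q(x^{l(l(k+1)-1)})\le q(x^{l(k)})$, i.e.\ $l(k+1)-1\ge k$, which is not guaranteed, while the shift by $m+1$ makes this comparison legitimate, as you note.

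The one place where your argument delivers less than the statement is assertion (ii) for the function values: from $b_{k+m+1}\le(1+C)^{-1}b_k$ and monotonicity you only get $b_k\le A\sigma^k$, i.e.\ geometric (R-linear) decay of $q(x^{l(k)})-q(\bar x)$, not the Q-linear convergence claimed in the theorem; a delayed recursion cannot rule out that the windowed maximum stays constant over several consecutive indices, so no per-step contraction factor follows from it. This shortfall does not propagate: the R-linear rate for $\{x^k\}$ only needs geometric decay of $b_k$ in the bound $\|x^k-\bar x\|\le C_2\bigl(b_{k-m-1}^{\theta}+b_{k-m-1}^{1/2}\bigr)$, and your $\theta=1$ and $\theta\in(0,\tfrac12)$ cases are correct and match the paper's conclusions. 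To obtain the Q-linear statement itself you would have to justify a one-step decrease inequality (as the paper implicitly assumes in \eqref{Eq:Relq}) or accept the weaker R-linear formulation for $\{q(x^{l(k)})\}$.
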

\begin{proof}
Without loss of generality, we assume that $q(x^{l(k)})>q(\bar x)$ for all $k\in \mathbb N$. Therefore, for all $k \geq k_0$ which is defined by \cref{LemNon:maxalpha-small}, one has \eqref{Eq:maxk0} holds, and then KL property of $q$ at $\bar x$ and \cref{LemNon:maxDistanceSubgrad} imply that
\begin{equation}\label{Eq:KLlast}
\begin{aligned}
1 &\leq \chi'\big(q(x^{l(k)})-q(\bar x)\big) \dist\big(0,\partial q(x^{l(k)})\big)\\
& \leq c\theta \big(q(x^{l(k)})-q(\bar x)\big)^{\theta-1}(\bar \gamma_{\rho}+L_{\rho})\|x^{l(k)}-x^{l(k)-1}\| \quad \forall k\geq k_0.
\end{aligned}
\end{equation}
\eqref{Eq:preplusKL} and \eqref{Eq:KLlast} imply that
\begin{equation*}
\begin{aligned}
q(x^{l(k+1)})-q(x^{l(k)})&\leq -\delta \frac{\gamma_{\min}}{2}\|x^{l(k+1)}-x^{l(k+1)-1}\|^2\\
& \leq -\delta \frac{\gamma_{\min}}{2(c\theta)^2(\bar \gamma_{\rho}+L_{\rho})^2}\big(q(x^{l(k+1)})-q(\bar x)\big)^{2(1-\theta)}\\
& :=-\xi\big(q(x^{l(k+1)})-q(\bar x)\big)^{2(1-\theta)} \quad \forall k\geq k_0
\end{aligned}
\end{equation*}
with $\xi:=\delta \frac{\gamma_{\min}}{2(c\theta)^2(\bar \gamma_{\rho}+L_{\rho})^2}$. Hence, we have
\begin{equation}\label{Eq:Relq}
\big(q(x^{l(k+1)})-q(\bar x)\big)^{2(1-\theta)} \leq \frac{1}{\xi}\big(q(x^{l(k)})-q(x^{l(k+1)})\big) \quad \forall k\geq k_0.
\end{equation}
Since $\{q(x^{l(k)})\}_{k\in \mathbb N}$ is decreasing, then statements \ref{conv rate equal 1},  \ref{conv rate les 0.5}, and \ref{conv rate more 0.5} regarding the sequence $\{q(x^{l(k)})\}_{k\in \mathbb N}$ follow from \cite[Lemma~2.14]{jia2023augmented}.  It remains to verify the convergence rate with respect to the sequence $\{x^k\}_{k\in \mathbb N}$.  We now consider the different cases of $\theta$.
\begin{itemize}
\item $\theta=1$: for all $k \geq k_0$, \eqref{Eq:Relq} implies that $q(x^{l(k+1)})-q(x^{l(k)})\leq -\xi$, which yields that
$\{q(x^{l(k)})\}_{k\in \mathbb N}$ converges to $q(\bar x)$ in finite steps, then one has $\{x^k\}_{k\in \mathbb N}$ also converges to $\bar x$ in finite steps.
\item $\theta\in (0, 1/2)$: for all $k \geq k_0$ and $s>k$, \eqref{Eqnon:maxInd-2} implies that
\begin{equation*}
\begin{aligned}
\|x^k-x^s\| &\leq \sum_{i=k}^{s}\|x^{i+1}-x^i\| \leq a \sqrt{q(x^{l(k-m-1)})-q(\bar x)}+bc \big(q(x^{l(k)})-q(\bar x) \big)^{\theta}\\
&\leq (a+bc)\big(q(x^{l(k-m-1)})-q(\bar x)\big)^{\theta},
\end{aligned}
\end{equation*}
where $a=\left(\frac{4m}{3}\sqrt{\frac{\bar \gamma_{\rho}+L_{\rho}}{\mu}}+\frac{m}{3}\sqrt{\frac{2(\bar \gamma_{\rho}+L_{\rho})}{\delta \gamma_{\min}-\mu}}+\sqrt{\frac{\delta\gamma_{\min} }{\delta\gamma_{\min}-\mu}}\right)\sqrt{\frac{2}{\delta \gamma_{\min}}}$, $b=\frac{8m(\bar \gamma_{\rho}+L_{\rho})}{3\delta \gamma_{\min}}\sqrt{\frac{\bar \gamma_{\rho}+L_{\rho}}{\mu}}+\frac{m}{3}\sqrt{\frac{2(\bar \gamma_{\rho}+L_{\rho})}{\delta \gamma_{\min}-\mu}}\left(\frac{3}{m}+\frac{2(\bar \gamma_{\rho}+L_{\rho})}{\delta \gamma_{\min}}\right)$.\\
Note that there exists some constant $\eta_1$ such that
\begin{equation*}
q(x^{l(k)})-q(\bar x) \leq \eta_1 k^{\frac{1}{2\theta-1}} \quad \forall \ \text{sufficiently large}\ k,
\end{equation*}
then
\begin{equation*}
\|x^k-x^s\| \leq (a+b)\eta_1^{\theta}k^{\frac{\theta}{2\theta-1}}:=\eta_2k^{\frac{\theta}{2\theta-1}} \quad \forall \ \text{sufficiently large}\ k,
\end{equation*}
where $\eta_2:=(a+b)\eta_1^{\theta}$, taking $s\to \infty$ yields the desired result.
\item $\theta\in [1/2,1)$: for all $k \geq k_0$ and $s>k$,  \eqref{Eqnon:maxInd-2} implies that
\begin{equation*}
\begin{aligned}
\|x^k-x^s\| &\leq \sum_{i=k}^{s}\|x^{i+1}-x^i\| \leq a \sqrt{q(x^{l(k-m-1)})-q(\bar x)}+bc \big(q(x^{l(k)})-q(\bar x) \big)^{\theta}\\
&\leq (a+bc)\big(q(x^{l(k-m-1)})-q(\bar x)\big)^{\frac{1}{2}}.
\end{aligned}
\end{equation*}
Note that $\{q(x^{l(k)})\}_{k\in \mathbb N}$ Q-linearly converges to $q(\bar x)$,  then $\{x^k\}_{k\in \mathbb N}$ R-linearly converges to $\bar x$ (by taking $s\to \infty$).
\end{itemize}
This completes the proof.
\end{proof}

\section{Conclusion}\label{Sec:con}
This manuscript presented the convergence and rate-of-convergence results of two types of NPG methods, i.e., those combined with average line search and max line search, respectively. The results were established under the KL property,  requiring only that the smooth part of the objective function has locally (rather than globally) Lipschitz continuous gradients. Note that the partitioning of the index set is a useful tool for the convergence analysis of NPG methods.  At least for the NPG method with average line search,  the convergence theory  is independent on the specific choice of partitioning.  Additionally,  our results in \cref{Sec:max} addressed the question raised in \cite[Final Remarks]{kanzow2024convergence}, demonstrating that the technique of proof there can be extended to the NPG method with max line search.

\bibliographystyle{habbrv}
\bibliography{references}

\end{document}